\documentclass[a4paper,12pt]{amsart}
\usepackage[utf8]{inputenc}
\usepackage[english]{babel}
\usepackage{amsmath, amssymb, latexsym, amsfonts}
\usepackage[matrix,arrow,curve]{xy}
\usepackage[T2A]{fontenc}
\usepackage{tikz}
\usetikzlibrary{arrows,matrix,fit,positioning}

\frenchspacing
\unitlength=1mm
\righthyphenmin = 2

\addtolength{\hoffset}{-1mm} \addtolength{\voffset}{-10.mm}
\addtolength{\textheight}{15mm} \addtolength{\textwidth}{12mm}

\newtheorem{theorem}{Theorem}
\newtheorem*{thmIntr}{Theorem}

\newtheorem{lemma}{Lemma}
\newtheorem{proposition}{Proposition}
\newtheorem{cor}{Corollary}
\theoremstyle{definition}
\newtheorem{definition}{Definition}

\theoremstyle{remark}
\newtheorem{rem}{Remark}

\newcommand{\SL}{{\rm SL}}

\newcommand{\pt}{{\rm pt}}
\newcommand{\Proj}{\mathbb{P}}

\newcommand{\Tc}{\mathcal{T}}
\newcommand{\SF}{\mathcal{SF}}
\newcommand{\A}{\mathbb{A}}
\newcommand{\SSp}{\mathbb{S}}
\newcommand{\PP}{\mathbb{P}}
\newcommand{\rank}{\operatorname{rank}}
\newcommand{\triv}{\mathcal{O}}
\newcommand{\Gm}{{\mathbb{G}_m}}
\newcommand{\SH}{\mathcal{SH}}

\newcommand{\BO}{\mathbf{BO}}
\newcommand{\unit}{e}
\newcommand{\Spec}{\operatorname{Spec}}

\newcommand{\bigslant}[2]{{\left.\raisebox{.2em}{$#1$}\middle/\raisebox{-.2em}{$#2$}\right.}}

\begin{document}

\title{
The special linear version of the projective bundle theorem
}
\author{Alexey Ananyevskiy}

\begin{abstract}
A special linear Grassmann variety $SGr(k,n)$ is the complement to the zero section of the determinant of the tautological vector bundle over $Gr(k,n)$. For an $SL$-oriented representable ring cohomology theory $A^*(-)$ with invertible stable Hopf map $\eta$, including Witt groups and $MSL_\eta^{*,*}$, we have $A^*(SGr(2,2n+1))\cong A^*(pt)[e]/\big( e^{2n}\big)$, and $A^*(SGr(k,n))$ is a truncated polynomial algebra over $A^*(pt)$ whenever at least one of the integers $k,n-k$ is even. A splitting principle for such theories is established. Using the computations for the special linear Grassmann varieties we obtain a description of $A^*(BSL_n)$ in terms of homogeneous power series in certain characteristic classes of tautological bundles.
\end{abstract}

\maketitle

\section{Introduction.}

The basic and most fundamental computation for oriented cohomology theories is the projective bundle theorem (see \cite{Mor1} or \cite[Theorem~3.9]{PS}) claiming $A^*(\PP^n)$ to be a truncated polynomial ring over $A^*(pt)$ with an explicit basis in terms of the powers of a Chern class. Having this result at hand one can define higher characteristic classes and compute the cohomology of Grassmann and flag varieties. In particular, the fact that cohomology of a full flag variety is a truncated polynomial algebra gives rise to a splitting principle, which states that from a viewpoint of an oriented cohomology theory every vector bundle is in a certain sense a sum of linear bundles. For a representable cohomology theory one can deal with an infinite dimensional Grassmannian which is a model for the classifying space $BGL_n$ and obtain even neater answer, the formal power series in the characteristic classes of the tautological vector bundle.

There are analogous computations for symplectically oriented cohomology theories \cite{PW1} with appropriately chosen varieties: quaternionic projective spaces $HP^n$ instead of the ordinary ones and quaternionic Grassmannian and flag varieties. The answers are essentially the same, algebras of truncated polynomials in characteristic classes. In loc. cit. these classes were referred to as ``Pontryagin classes'', but it was noted by Buchstaber that it would be more convenient to name them ``Borel classes'' since they correspond to the symplectic Borel classes in topology. We prefer to adopt this modification of the terminology.

These computations have a variety of applications, for example theorems of Conner and Floyd's type \cite{CF} describing $K$-theory and hermitian $K$-theory as quotients of certain universal cohomology theories \cite{PPR1,PW4}.

In the present paper we establish analogous results for $SL$-oriented cohomology theories. The notion of such orientation was introduced in \cite[Definition~5.1]{PW3}. At the same preprint there was constructed a universal $SL$-oriented cohomology theory, namely the algebraic special linear cobordisms $MSL$, \cite[Definition~4.2]{PW3}. A more down to earth example is derived Witt groups defined by Balmer \cite{Bal1} and oriented via Koszul complexes \cite{Ne2}. A comprehensive survey on the Witt groups could be found in \cite{Bal2}. Of course, every oriented cohomology theory admits a special linear orientation, but it will turn out that we are not interested in such examples. We will deal with representable cohomology theories and work in the unstable $H_\bullet(k)$  and stable $\SH(k)$ motivic homotopy categories introduced by Morel and Voevodsky \cite{MV,V}. We recall all the necessary constructions and notions in sections 2-4 as well as provide preliminary calculations with special linear orientations.

We need to choose an appropriate version of "projective space" analogous to $\PP^n$ and $HP^n$. Natural candidates are $SL_{n+1}/SL_{n}$ and $\A^{n+1}-\{0\}$. There is no difference which one to choose since the first one is an affine bundle over the latter one, so they have the same cohomology. We take $\A^{n+1}-\{0\}$ since it looks prettier  from the geometric point of view. There is a calculation for the Witt groups of this space \cite[Theorem~8.13]{BG} claiming that $W^*(\A^{n+1}-\{0\})$ is a free module of rank two over $W^*(pt)$ with an explicit basis. The fact that it is a free module of rank two is not surprising since $\A^{n+1}-\{0\}$ is a sphere in the stable homotopy category $\SH_\bullet(k)$ and $W^*(-)$ is representable \cite{Hor}. The interesting part is the basis. Let $\Tc=\triv^{n+1}/\triv(-1)$ be the tautological $\rank$ $n$ bundle over $\A^{n+1}-\{0\}$. Then for $n=2k$ the basis consists of the element $1$ and the class of a Koszul complex. The latter one is the Euler class $e(\Tc)$ in the Witt groups. Unfortunately, for the odd $n$ the second term of the basis looks more complicated. Moreover, for an oriented cohomology theory even in the case of $n=2k$ the corresponding Chern class vanishes, so one can not expect that $1$ and $e(\Tc)$ form a basis for every cohomology theory with a special linear orientation.

Here comes into play the following observation. The maximal compact subgroup of $SL_n(\mathbb{R})$ is $SO_n(\mathbb{R})$, so over $\mathbb{R}$ the notion of a special linear orientation of a vector bundle derives to the usual topological orientation of a bundle. The Euler classes of oriented vector bundles in topology behave themselves well only after inverting $2$ in the coefficients, so we want to invert in the algebraic setting something analogous to $2$. There are two interesting elements in the stable cohomotopy groups $\pi^{*,*}(pt)$ that go to $2$ after taking $\mathbb{R}$-points, a usual $2\in \pi^{0,0}(pt)$ and the stable Hopf map $\eta\in \pi^{-1,-1}(pt)$ arising from the morphism $\A^2-\{0\}\to \PP^1$. In general $2$ is not invertible in the Witt groups, so we will invert $\eta$. Moreover, recall a theorem due to Morel \cite{Mor2} claiming that for a perfect field there is an isomorphism $\oplus_n\pi^{n,n}(\Spec k)[\eta^{-1}]\cong W^0(k)[\eta,\eta^{-1}]$, so in a certain sense $\eta$ is invertible in the Witt groups. In sections 5-6 we do some computations justifying the choice of $\eta$.

In this paper we deal mainly with the cohomology theories obtained as follows. Take a commutative monoid $(A,m,e: \SSp\to A)$ in the stable homotopy category $\SH(k)$ and fix a special linear orientation on the cohomology theory $A^{*,*}(-)$. The unit $e: \SSp\to A$ of the monoid $(A,m,e)$ induces a morphism of cohomology theories $\pi^{*,*}(-)\to A^{*,*}(-)$ making $A^{*,*}(X)$ an algebra over the stable cohomotopy groups. Inverting stable Hopf map $\eta$ we obtain a cohomology theory
\[
A^{*,*}_\eta(X)=A^{*,*}(X)[\eta^{-1}].
\]
This theory is periodic in the $(1,1)$--direction via cup-product $(-\cup\eta^n)$, thus without loss of generality we may focus on
\[
A^{*}(X)=A^{*,0}_\eta(X).
\]
It is still a cohomology theory. One can regard it as a $(1,1)$-periodic cohomology theory $A^{*,*}_\eta(-)$ collapsed in the $(1,1)$-direction. For these cohomology theories we have a result analogous to the case of the Witt groups.
\begin{thmIntr}
$
A^*(\A^{2n+1}-\{0\})= A^{*}(pt)\oplus A^{*-2n}(pt)e(\Tc).
$
\end{thmIntr}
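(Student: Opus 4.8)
The plan is to split the proof into the (formal) determination of the $A^*(pt)$-module structure and the (substantial) identification of the free generator with $e(\Tc)$; only the latter uses that $\eta$ is invertible. Pointing $\A^{2n+1}-\{0\}$ by a rational point makes it $\A^1$-equivalent to the motivic sphere $S^{4n+1,2n+1}=(\PP^1)^{\wedge 2n}\wedge\Gm$, so $\widetilde A^{*,*}(\A^{2n+1}-\{0\})$ is free of rank one over $A^{*,*}(pt)$ on a class of bidegree $(4n+1,2n+1)$; passing to the collapsed theory yields $A^*(\A^{2n+1}-\{0\})\cong A^*(pt)\oplus A^{*-2n}(pt)\cdot g$ with $g$ a free generator of degree $2n$. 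Since $\Tc$ is trivial over the chosen point, $e(\Tc)$ lies in the reduced summand, so $e(\Tc)=\phi\,g$ with $\phi\in A^0(pt)$, and everything reduces to showing $\phi\in A^0(pt)^{\times}$.

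I would prove this by induction on $n$. The base case $n=1$, i.e.\ $A^*(\A^3-\{0\})=A^*(pt)\oplus A^{*-2}(pt)\,e(\Tc)$, should be one of the preliminary computations of Sections~5--6: there $\rank\Tc=2$, so $\Tc$ carries a canonical symplectic structure ($\wedge^2\Tc\cong\triv$), and this is precisely where invertibility of $\eta$ is indispensable, the corresponding Euler class being zero for an honestly oriented theory (cf.\ the introduction). For the inductive step ($n\ge 2$), write $\A^{2n+1}-\{0\}=Z\sqcup U$ with $Z=\{x_{2n}=x_{2n+1}=0\}\smallsetminus\{0\}\cong\A^{2n-1}-\{0\}$, closed of codimension two with trivial normal bundle $\triv^2$, and $U=\{(x_{2n},x_{2n+1})\neq0\}\cong\A^{2n-1}\times(\A^2-\{0\})$. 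Purity and localisation give a long exact sequence
\[
\cdots\longrightarrow A^{*-2}(\A^{2n-1}-\{0\})\xrightarrow{\ i_*\ }A^*(\A^{2n+1}-\{0\})\xrightarrow{\ \mathrm{res}\ }A^*(U)\longrightarrow\cdots,
\]
the shift by two coming from the trivial rank-two normal bundle after collapsing $\eta$. The inclusion $\A^2-\{0\}\hookrightarrow\A^{2n+1}-\{0\}$ of the last two coordinates is stably null and factors through $U$ (with $U\to\A^2-\{0\}$ an equivalence), so restriction to $U$ kills $\widetilde A^*(\A^{2n+1}-\{0\})$ and hence $\mathrm{im}(i_*)=\widetilde A^*(\A^{2n+1}-\{0\})$; also $i_*(1_Z)=0$, by base change from the vanishing of the class of a codimension-two linear subspace of $\A^{2n+1}$.

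It remains to track $e(\Tc)$. Over $U$, $\Tc|_U$ contains a trivial sub-bundle of positive rank (a complement of $kx$ contains the first $\A^{2n-1}$), hence has a nowhere-vanishing section, so $e(\Tc)|_U=0$ and $e(\Tc)=i_*(\alpha)$ for some $\alpha$. To identify $\alpha$, restrict to an affine-bundle neighbourhood of $Z$: there $\Tc$ contains the trivial rank-two sub-bundle $\underline{\A^2}$ of the last two coordinate directions (independent of $kx$ near $Z$), and the Koszul-type section $x\mapsto\overline{(0,\dots,0,x_{2n},x_{2n+1})}$ of $\underline{\A^2}$ vanishes transversally precisely along $Z$ with the expected normal data; using $(\Tc/\underline{\A^2})|_Z\cong\Tc'$ (the lower tautological bundle $\triv^{2n-1}/\triv(-1)$ on $\A^{2n-1}-\{0\}$), multiplicativity of $e$ over this extension, the projection formula, and excision $A^*_Z(\A^{2n+1}-\{0\})=A^*_Z(\mathrm{nbhd})$, one obtains $\alpha=\langle u\rangle\,e(\Tc')$ for a unit $\langle u\rangle$ (the only discrepancy being a comparison of two trivialisations of $\det\triv^2$ over $Z$, i.e.\ an element of $k^{\times}$ since $n\ge 2$). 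By induction $e(\Tc')$ freely generates $\widetilde A^*(\A^{2n-1}-\{0\})$, and since $i_*$ is $A^*(pt)$-linear with $i_*(1_Z)=0$ and image $\widetilde A^*(\A^{2n+1}-\{0\})$, it follows that $A^*(pt)\cdot e(\Tc)=A^*(pt)\cdot i_*\!\big(e(\Tc')\big)=\widetilde A^*(\A^{2n+1}-\{0\})=A^*(pt)\cdot g$, whence $\phi$ is a unit and the theorem follows.

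The main obstacle is the identification $\alpha=\langle u\rangle\,e(\Tc')$: controlling the Gysin map and the SL-Euler class under the closed embedding $Z\hookrightarrow\A^{2n+1}-\{0\}$, and in particular checking that the discrepancy is a genuine unit of $A^0(pt)$ rather than an $\eta$-divisible element — together with the base case, into which the use of $\eta$ is concentrated. A more conceptual but technically heavier alternative replaces the induction by the stable equivalence $\operatorname{Th}(\Tc)\simeq S^{4n,2n}\wedge(\A^{2n+1}-\{0\})_+$, obtained by deforming the tautological extension $0\to\triv(-1)\to\triv^{2n+1}\to\Tc\to0$ and using that $\triv(-1)$ is trivial over $\A^{2n+1}-\{0\}$; under it $e(\Tc)$ is read off from the zero section, which by Morel's isomorphism $\bigoplus_m\pi^{m,m}(\Spec k)[\eta^{-1}]\cong W(k)[\eta,\eta^{-1}]$ is a multiple of $\eta$, and one is again reduced to checking that its coefficient is a unit.
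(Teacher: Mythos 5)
Your opening reduction is fine (pointing the space makes it a motivic sphere, so one only has to show that the coefficient $\phi\in A^0(pt)$ of $e(\Tc)$ on a free generator is a unit), and the skeleton of your localization step is also fine: with $Z\cong\A^{2n-1}-\{0\}$ and $U\simeq\A^2-\{0\}$ one indeed gets $\operatorname{im}(i_A)=\widetilde A^*(\A^{2n+1}-\{0\})$ and $i_A(1)=0$, hence $e(\Tc)=b\, i_A(e(\Tc'))$ for some $b\in A^0(pt)$, and everything hinges on $b$ being a unit. But the method you propose for exactly this point --- identifying $\alpha$ by restricting to the affine-bundle neighbourhood $V=\{(x_1,\dots,x_{2n-1})\neq 0\}$ of $Z$ --- cannot work. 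We have $V\cong Z\times\A^2$ with $i\colon Z\to V$ the zero section of a trivial rank two bundle, so by base change the composite $A^*(Z)\xrightarrow{i_A}A^*(\A^{2n+1}-\{0\})\to A^*(V)$ is $\alpha\mapsto p^A(\alpha)\cup e(\triv^2_Z)=0$, identically zero; correspondingly $e(\Tc)|_V=0$, since over $V$ the images of $e_{2n},e_{2n+1}$ span a trivial rank two special linear subbundle of $\Tc$ and Lemmas~\ref{lem_eulermult} and~\ref{lemm_triv_section} kill the product. So on $V$ your comparison degenerates to $0=0$ and carries no information about $b$; the excision $A^*_Z(\A^{2n+1}-\{0\})\cong A^*_Z(V)$ does not repair this, because $e(\Tc)$ has no preferred lift to cohomology with supports in $Z$, and the ambiguity of such a lift (classes supported on the complementary $\A^2-\{0\}$) is exactly of the size of the unit you are trying to pin down. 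The coefficient $b$ is genuinely global: it dies on both members of the cover $\{U,V\}$, so no argument that only looks at a Zariski neighbourhood of $Z$ can detect it; and since $i_A(e(\Tc'))$ is automatically a free generator of $\widetilde A^*$, proving ``$b$ is a unit'' is equivalent to the theorem for that $n$, i.e.\ your inductive step has not actually reduced the difficulty.

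The second gap is the base case, into which you yourself concentrate all use of $\eta$: it is not ``one of the preliminary computations of Sections~5--6''. Section~5 only computes, in stable cohomotopy, the boundary of a pushforward along a $\Gm\subset\A^{n+1}-\{0\}$ (Proposition~\ref{prop_stable_coh}); the statement that $\{1,e(\Tc)\}$ is a basis for $\A^3-\{0\}$ is Lemma~\ref{lem_main} itself (its case $n=1$), and its proof is where the work happens: one writes down an explicit section of the dual bundle $\Tc_E^\vee$ over all of $\A^{2n+1}-\{0\}$ vanishing transversally exactly on a $\Gm$ (this is where odd rank enters: the zero locus of a section of the full rank $2n$ bundle has the right codimension, unlike your codimension two $Z$), so that $e(\Tc_E^\vee)=i_A(1)$ by Proposition~\ref{prop_section}; one then compares $i_A$ with $i_\pi$ and uses $\partial i_\pi(1)=(-1)^n\epsilon\,\Sigma_T^{n+1}\eta$, which after setting $\eta=1$ gives $\partial_A e(\Tc_E^\vee)=-1$, and finally Lemma~\ref{lemm_dual_eu} transfers the conclusion to $e(\Tc_E)$. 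The symplectic structure on a rank two special linear bundle does not by itself give the $n=1$ case (the statement is false for oriented theories with their induced symplectic orientation, cf.\ Corollary~\ref{cor_oriented}), so some such computation is unavoidable; and since that argument is uniform in $n$, once the base case is done properly your induction is superfluous --- the paper's proof of Theorem~\ref{thm_main} is exactly Lemma~\ref{lem_main} plus a Mayer--Vietoris reduction. Your concluding ``conceptual alternative'' ($Th(\Tc)\simeq S^{4n,2n}\wedge(\A^{2n+1}-\{0\})_+$ and Morel's theorem) again defers precisely this unit computation, so it is not a proof either.
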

\noindent The relative version of this statement is Theorem~\ref{thm_main} in section~7. Note that there is no similar result for $\A^{2n}-\{0\}$.

In the next section we consider another family of varieties, called special linear Grassmannians $SGr(2,n)=SL_{n}/P_2'$, where $P_2'$ stands for the derived group of the parabolic subgroup $P_2$, i.e. $P_2'$ is the stabilizer of the bivector $e_1\wedge e_2$ in the exterior square of the regular representation of $SL_n$. There are tautological bundles $\Tc_1$ and $\Tc_2$ over $SGr(2,n)$ of ranks $2$ and $n-2$ respectively. We have the following theorem which seems to be the correct version of the projective bundle theorem in the special linear setting.
\begin{thmIntr}
For the special linear Grassmann varieties we have the next equalities.
$$
A^*(SGr(2,2n))=\bigoplus\limits_{i=0}^{2n-2} A^{*-2i}(pt)e(\Tc_1)^i\oplus A^{*-2n+2}(pt)e(\Tc_2),
$$
$$
A^*(SGr(2,2n+1))= \bigoplus\limits_{i=0}^{2n-1} A^{*-2i}(pt)e(\Tc_1)^i.
$$
\end{thmIntr}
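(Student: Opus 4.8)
The plan is to induct on $n$, treating the odd and even cases by different geometric reductions, with the special linear Gysin (localization) sequences as the main tool and the computation of $A^*(\A^{2n+1}-\{0\})$ — together with its relative form, Theorem~\ref{thm_main} — as the only external input; the base case is $SGr(2,3)\cong\A^3-\{0\}$, where the assertion is exactly that computation. The workhorse is the variety $SL_n/P_{1,2}'$, whose points are pairs consisting of a point $\tau$ of $SGr(2,n)$ together with a nonzero vector of the corresponding $2$-plane. It carries two fibrations: forgetting the vector realises it as the complement of the zero section of $\Tc_1$ over $SGr(2,n)$ — here $\det\Tc_1$ is canonically trivial, via the tautological nowhere-zero section of $\det\Tc_1$ over any $SGr(2,m)$, so the associated Gysin sequence is available in the special linear orientation — while forgetting $\tau$ realises it as the complement of the zero section of the tautological rank $n-1$ quotient bundle $\triv^n/\triv(-1)$ over $\A^n-\{0\}$.

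For the odd case $SGr(2,2n+1)$ the bundle $\triv^{2n+1}/\triv(-1)$ over $\A^{2n+1}-\{0\}$ has even rank $2n$, which is precisely the setting of Theorem~\ref{thm_main}: it computes $A^*(\A^{2n+1}-\{0\})$ and exhibits the Euler class of that bundle as a free generator with vanishing square. Feeding this into the Gysin sequence for $SL_{2n+1}/P_{1,2}'\to\A^{2n+1}-\{0\}$, whose boundary maps are cup product with that Euler class, shows that $A^*(SL_{2n+1}/P_{1,2}')$ is a free $A^*(pt)$-module of rank two, on $1$ and one class of degree $4n-1$. Now feed this into the Gysin sequence for $SL_{2n+1}/P_{1,2}'\to SGr(2,2n+1)$, whose boundary maps are cup product with $e(\Tc_1)$: comparing ranks and degrees forces $A^*(SGr(2,2n+1))/(e(\Tc_1))\cong A^*(pt)$ and forces the annihilator of $e(\Tc_1)$ to be free of rank one, generated in degree $2(2n-1)$. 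Since $e(\Tc_1)$ then generates $A^*(SGr(2,2n+1))$ as an $A^*(pt)$-algebra, this annihilator information identifies the module as the free one on $1,e(\Tc_1),\dots,e(\Tc_1)^{2n-1}$.

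For the even case $SGr(2,2n)$ I would use instead the closed embedding $SGr(2,2n-1)\hookrightarrow SGr(2,2n)$ given by the bivectors supported in a coordinate hyperplane. Its normal bundle is the restriction of $\Tc_1$ (again with trivialised determinant), and the open complement $U$ is the total space of a rank $2n-2$ vector bundle over $\A^{2n-1}-\{0\}$, so $A^*(U)\cong A^*(\A^{2n-1}-\{0\})$, once more of the shape handled by Theorem~\ref{thm_main}. The crucial point is that under this identification the restriction of $\Tc_2$ to $U$ is pulled back from the tautological bundle of $\A^{2n-1}-\{0\}$, so the free generator of the reduced part of $A^*(U)$ is the restriction of $e(\Tc_2)$ and hence lifts to $SGr(2,2n)$; therefore the connecting map in the localization sequence vanishes and it splits into a short exact sequence of free $A^*(pt)$-modules. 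Combining this with the already established odd case and with the projection formula — which sends $i_*(e(\Tc_1)^i)$ to $e(\Tc_1)^{i+1}$ modulo terms of lower order in $e(\Tc_1)$ — assembles the stated basis $1,e(\Tc_1),\dots,e(\Tc_1)^{2n-2},e(\Tc_2)$, the last class being the lift of the reduced generator of $A^*(U)$.

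The step I expect to be the main obstacle is making the long exact sequences collapse to these direct sum decompositions. In the odd case the delicate point is passing from knowing the mere size of $A^*(SL_{2n+1}/P_{1,2}')$ to the full module structure of $A^*(SGr(2,2n+1))$ — essentially, proving that $e(\Tc_1)$ generates the cohomology rather than only bounding its rank. In the even case the delicate point is the identification of the restrictions of the normal and tautological bundles, on which both the vanishing of the connecting map and the very appearance of $e(\Tc_2)$ in the basis depend; this is also where the parity is essential, since a naive hyperplane reduction of $SGr(2,2n+1)$ to $SGr(2,2n)$ would leave a complement fibered over $\A^{2n}-\{0\}$, whose reduced generator is not an Euler class (as recalled in the introduction), so that the connecting map would no longer vanish and would instead have to be computed by hand. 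A recurring technical requirement, to be checked at every stage, is that each normal bundle occurring has trivialised determinant, so that the special linear Gysin sequences and Thom isomorphisms genuinely apply.
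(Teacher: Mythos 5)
Your even-rank step is essentially the paper's own argument for the passage $SGr(2,2n-1)\subset SGr(2,2n)$: the open complement is an affine bundle over $\A^{2n-1}-\{0\}$, the identification $j^*\Tc_2\cong p^*\Tc_2(1,2n-1)$ shows that the free generator of $A^*(\A^{2n-1}-\{0\})$ is the restriction of $e(\Tc_2)$, so the restriction is split surjective, the pushforward is injective, and the basis is assembled from $i_A(e(\Tc_1)^l)$ via Proposition~\ref{prop_section}; that part is sound (granted the odd case as input). Likewise the first half of your odd-rank step is correct: $SL_{2n+1}/P_{1,2}'$ is the complement of the zero section both of $\Tc_1$ over $SGr(2,2n+1)$ and of the rank $2n$ tautological quotient bundle over $\A^{2n+1}-\{0\}$, and the second presentation together with Lemma~\ref{lem_main} (the generator has square zero) does give that $A^*(SL_{2n+1}/P_{1,2}')$ is free of rank two over $A^*(pt)$ on classes of degrees $0$ and $4n-1$.

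The gap is in the second half of the odd step, and you have in fact pointed at it yourself: "comparing ranks and degrees" in the Gysin sequence of $\Tc_1$ does \emph{not} force $A^*(SGr(2,2n+1))/(e(\Tc_1))\cong A^*(pt)$, nor that the annihilator of $e(\Tc_1)$ is free of rank one. The long exact sequence only yields, in each degree, a short exact sequence with the known rank-two free module in the middle and the cokernel and kernel of cup product by $e(\Tc_1)$ on the two sides; it cannot decide whether the degree $4n-1$ generator of $A^*(SL_{2n+1}/P_{1,2}')$ lifts to $SGr(2,2n+1)$ (boundary zero) or transgresses to a generator of the annihilator of $e(\Tc_1)$. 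For instance, a module of the form $A^*(pt)[e_1]\langle 1,x\rangle$ with $\deg x=4n-1$ and $e_1$ acting injectively satisfies all the constraints you list, and there $e_1$ is not nilpotent and its powers do not span. No degree or dimension argument available in this framework rules this out: $A^*(pt)$ is in general unbounded (for Witt groups it is nonzero in infinitely many degrees), so neither a Nakayama-type iteration of $A^*(G)=A^*(pt)+e_1A^*(G)$ nor a vanishing-above-the-dimension principle is at your disposal, and your subsequent claim that $e(\Tc_1)$ "then generates" the cohomology as an $A^*(pt)$-algebra has no justification. What is actually needed is a computation of a boundary/pushforward, and this is precisely where the paper works: it runs the induction step into $SGr(2,2n+1)$ via the closed embedding $SGr(2,2n)\subset SGr(2,2n+1)$ with open complement an affine bundle over $\A^{2n}-\{0\}$ (where only the non-canonical basis of Lemma~\ref{lem_E0_noncan} exists), and the decisive input is the explicit vanishing $i_A(e(\Tc_2))=0$, proved geometrically with sections and Propositions~\ref{prop_section} and~\ref{prop_composition}, which then pins down $\ker i_A$ and $\operatorname{Im}j^A$. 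Your route could in principle be repaired by computing the boundary map on the degree $4n-1$ class through the second fibration, but that is a computation of the same nature and difficulty as the one the paper performs, not a consequence of rank bookkeeping.
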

\noindent Recall that there is a recent computation of the twisted Witt groups of Grassmannians \cite{BC}. The twisted groups are involved since the authors use pushforwards that exist only in the twisted case. We deal with the varieties with a trivialized canonical bundle and closed embeddings with a special linear normal bundle in order to avoid these difficulties. In fact we are interested in the relative computations that could be extended to the Grassmannian bundles, so we look for a basis consisting of characteristic classes rather then pushforwards of certain elements. It turns out that such bases exist only for the special linear flag varieties with all but at most one dimension step being even, i.e. we can handle $SGr(1,7)$, $\SF(2,4,6)$ and $\SF(2,5,7)$ but not $SGr(3,6)$. Nevertheless it seems that one can construct the basis for the latter case in terms of pushforwards.

Sections 8 and 9 are devoted to the computations of the cohomology rings of partial flag varieties. We obtain an analogue of the splitting principle in Theorem~\ref{thm_split} and derive certain properties of the characteristic classes. In particular, there is a complete description of the cohomology rings of maximal $SL_2$ flag varieties,
\[
\SF(2n)=SL_{2n}/P_{2,4,\dots,2n-2}',\quad
\SF(2n+1)=SL_{2n+1}/P_{2,4,\dots,2n}'.
\]
The result looks as follows (see Remark~\ref{rem_SF}).
\begin{thmIntr}
For $n\ge 1$ consider
$$
s_i=\sigma_i(e_1^2,e_2^2,...,e_n^2),\quad t=\sigma_n(e_1,e_2,\dots,e_n)
$$
with $\sigma_i$ being the elementary symmetric polynomials in $n$ variables. Then we have the following isomorphisms
\begin{enumerate}
\item
$
A^*(\SF(2n))\cong \bigslant{A^*(pt)[e_1,e_2,...,e_{n}]}{\big( s_1,s_2,\dots,s_{n-1},t\big)},
$
\item
$
 A^*(\SF(2n+1))\cong \bigslant{A^*(pt)[e_1,e_2,...,e_{n}]}{\big( s_1,s_2,\dots,s_n\big)}.
$
\end{enumerate}
\end{thmIntr}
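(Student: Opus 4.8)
This is the case over $\Spec k$ of the splitting principle (Theorem~\ref{thm_split}); the content of the remark is the explicit identification of the ideal of relations, which I would carry out as follows. On $\SF(m)$ (with $m=2n$ or $m=2n+1$) there is a tautological flag $0=\vb_0\subset\vb_2\subset\dots\subset\triv^m$ of subbundles with $\det\vb_{2i}$ trivialised and $\rank\vb_{2i}=2i$; write $U_i=\vb_{2i}/\vb_{2i-2}$ for the rank $2$ subquotients, each with trivialised determinant, and set $e_i=e(U_i)$. Forgetting $\vb_2,\vb_4,\dots$ one at a time exhibits $\SF(m)$ as the top of a tower $X_0=\Spec k$, $X_1=SGr(2,m)$, $\dots$, $X_j=SGr(2,\Tc_2^{(j-1)})$, the relative special linear Grassmannian of $2$-planes in the rank $(m-2j+2)$ tautological quotient bundle $\Tc_2^{(j-1)}$ of $X_{j-1}$; the tower stops at $X_{n-1}$ when $m=2n$ and at $X_n$ when $m=2n+1$. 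Pulling back the tautological rank $2$ subbundle $\Tc_1^{(j)}$ of stage $j$ to $\SF(m)$ recovers $U_j$, so stage $j$ contributes the class $e_j$; moreover each $e(\Tc_2^{(j)})$ restricts to $e_{j+1}\cdots e_n$ on $\SF(2n)$, so when $m=2n$ the last stage (an $SGr(2,4)$-bundle) also contributes $e_n$ through its $e(\Tc_2)$-term. The combinatorially decisive point is that for $m=2n$ every stage is a bundle of Grassmannians $SGr(2,2k)$, while for $m=2n+1$ every stage is a bundle of Grassmannians $SGr(2,2k+1)$.

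Applying Leray--Hirsch at each stage (the fibre cohomologies $A^*(SGr(2,2k))$ and $A^*(SGr(2,2k+1))$ are already known, with bases given by Euler classes of the relative tautological bundles, which are available globally), an induction along the tower shows that $A^*(\SF(m))$ is a free $A^*(pt)$-module, generated as an algebra over $A^*(pt)$ by $e_1,\dots,e_n$, of rank $\prod_{j=1}^{n}2j=2^{n}n!$ when $m=2n+1$ and of rank $\prod_{j=2}^{n}2j=2^{n-1}n!$ when $m=2n$. Hence there is a surjection of graded $A^*(pt)$-algebras $A^*(pt)[e_1,\dots,e_n]\twoheadrightarrow A^*(\SF(m))$. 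To identify its kernel, note that on $\SF(m)$ the bundle $\triv^m$ has a complete filtration with subquotients $U_1,\dots,U_n$ plus one trivialised line bundle when $m$ is odd. The Whitney sum formula for Pontryagin classes then gives $\prod_{i=1}^{n}p(U_i)=p(\triv^m)=1$; as $p(U_i)=1-e_i^2$ for a rank $2$ special linear bundle (for a suitable sign convention), the homogeneous components of $\prod_i(1-e_i^2)-1$ are, up to sign, the polynomials $s_j=\sigma_j(e_1^2,\dots,e_n^2)$, which lie in pairwise distinct degrees, so each $s_j$ vanishes in $A^*(\SF(m))$. When $m=2n$ the Whitney formula for Euler classes likewise gives $t=\sigma_n(e_1,\dots,e_n)=\prod_i e(U_i)=e(\triv^{2n})=0$. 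Thus the surjection factors through $A^*(pt)[e]/(s_1,\dots,s_n)$ for $m=2n+1$ and through $A^*(pt)[e]/(s_1,\dots,s_{n-1},t)$ for $m=2n$ (using $s_n=t^2$).

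It remains to check that these quotients have the $A^*(pt)$-ranks computed above, after which one is done: a surjection of finite free modules of equal rank over a commutative ring splits, and its kernel then vanishes by comparing ranks at every residue field, so it is an isomorphism. Since $\Z[e_1,\dots,e_n]$ is free of rank $2^n$ over $\Z[e_1^2,\dots,e_n^2]$ and $\Z[e_1^2,\dots,e_n^2]/(s_1,\dots,s_n)$ is the $S_n$-coinvariant algebra, $\Z$-free of rank $n!$, the ring $\Z[e]/(s_1,\dots,s_n)$ is $\Z$-free of rank $2^n n!$; a short supplementary computation, using $s_n=t^2$, shows $\Z[e]/(s_1,\dots,s_{n-1},t)$ is $\Z$-free of rank $2^{n-1}n!$. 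Tensoring with $A^*(pt)$ gives the required identifications.

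The hardest part, I expect, is the bookkeeping in the even case $m=2n$: one has to verify carefully that the tower really produces the generator $e_n$ from the $e(\Tc_2)$-summand at the last stage, and, on the algebraic side, that passing from the relation $s_n$ to its ``square root'' $t$ still leaves a $\Z$-free module of rank exactly $2^{n-1}n!$ rather than something larger. A secondary technical point is to extract the Whitney formula for Pontryagin classes over a filtration (not merely a direct sum) from the $SGr(2,\cdot)$-computations, and to confirm that the special linear Grassmannian bundles in the tower are Zariski-locally trivial compatibly with the determinant trivialisations.
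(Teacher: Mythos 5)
Your proposal follows the same geometric skeleton as the paper --- the tower of relative $SGr(2,-)$-bundles, freeness of $A^*(\SF(m))$ over $A^*(pt)$ with algebra generators $e_1,\dots,e_n$ (this is Theorem~\ref{theorem_proj_b2} together with the splitting principle, Theorem~\ref{thm_split}), and the relations $s_j=0$, $t=0$ obtained from multiplicativity of Borel and Euler classes along the tautological filtration (Lemmas~\ref{lemm_borel_cl}, \ref{lem_eulermult}, \ref{lem_BorelCartan}, so your ``secondary technical point'' is already covered) --- but your endgame is genuinely different. The paper never counts ranks: it identifies the ideal exactly, by proving at each stage of the tower the algebra presentation of $A^*(SGr(2,\Tc))$ (Theorem~\ref{thm_SGr(2,Tc)}, via division of total Borel classes and $b_{\mathrm{top}}=\pm e^2$), assembling these by induction into Proposition~\ref{prop_partialFlag}, and rewriting the ideal (Lemma~\ref{lem_partialRel}) to arrive at the coinvariant presentation of Remark~\ref{rem_SF}. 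You instead verify only that the evident relations hold and then conclude by comparing $A^*(pt)$-ranks with the integral coinvariant algebras of $W(B_n)$ and $W(D_n)$; this buys you the right to skip the stage-by-stage identification of presentations, at the price of putting all the weight on the integral size of the candidate quotient.

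That is exactly where the one genuine gap sits. Your count for the odd case is complete and correct ($\Z[e_1,\dots,e_n]$ is free of rank $2^n$ over $\Z[e_1^2,\dots,e_n^2]$, and the $S_n$-coinvariants are $\Z$-free of rank $n!$). For the even case, however, the claim that $\Z[e_1,\dots,e_n]/(s_1,\dots,s_{n-1},t)$ is $\Z$-free of rank $2^{n-1}n!$ is not a ``short supplementary computation'': using $s_n=t^2$ only exhibits this ring as the quotient of the $B_n$-coinvariant algebra $C_B$ by $tC_B$, and since $t^2=0$ in $C_B$ one gets merely the inequality $\operatorname{rank}\ge 2^{n-1}n!$, while torsion-freeness of the quotient is not automatic. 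To finish you need either an explicit monomial spanning set of cardinality $2^{n-1}n!$ (this is precisely Proposition~\ref{prop_spanning}(2), whose combinatorial proof occupies a real part of the paper) combined with the fact that the $\mathbb{Q}$-dimension equals $|W(D_n)|=2^{n-1}n!$, or an honest analysis of multiplication by $t$ on $C_B$; as written, the even half of the statement is unproved. A smaller repairable point: your final step quotes a fact about commutative rings, but $A^*(pt)$ is only graded-commutative with $\overline{\epsilon}=-1$ and may contain anticommuting odd-degree elements; since all your module generators and the classes $e_i$ lie in even degrees, the surjection is the base change of a map of free modules over the commutative even-degree subring of $A^*(pt)$, so the argument survives, but this should be said explicitly.
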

\noindent Note that one can substitute $SL_n/(SL_2)^{[n/2]}$ instead of $\SF(n)$. These answers and the choice of commuting $SL_2$ in $SL_n$ perfectly agree with our principle that $SL_n(\mathbb{R})$ stands for $SO_n(\mathbb{R})$, since $SL_2(\mathbb{R})$ stands for the compact torus $S^1\cong SO_2(\mathbb{R})$, and the choice of maximal number of commuting $SL_2$ is parallel to the choice of the maximal compact torus. We get the coinvariants for the Weyl groups $W(B_{n})$ and $W(D_n)$ and it is what one gets computing the cohomology of $SO_n(\mathbb{R})/T$. 

In section 12 we carry out a computation for the cohomology rings of the special linear Grassmannians $SGr(m,n)$ with at least one of the integers $m,n-m$ being even, see Theorem~\ref{thm_SGr}. The answer is a truncated polynomial algebra in certain characteristic classes. At the end we assemble the calculations for the special linear Grassmannians and compute in Theorem~\ref{thm_BSL} the cohomology of the classifying spaces $BSL_n$ in terms of homogeneous formal power series.
\begin{thmIntr}
We have the following isomorphisms.
$$
A^*(BSL_{2n})\cong A^*(pt)\left[\left[p_1,\dots,p_{n-1},e\right]\right]_h,
$$
$$
A^*(BSL_{2n+1})\cong A^*(pt)\left[\left[p_1,\dots,p_{n}\right]\right]_h.
$$
\end{thmIntr}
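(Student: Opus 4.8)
The plan is to realize the classifying space as a filtered colimit of the special linear Grassmannians already computed and then pass to the limit in cohomology. Recall that $Gr(m,N)$ for $N\to\infty$ is a model for $BGL_m$, and that $SGr(m,N)=SL_N/P_m'$ is the total space of the $\Gm$-torsor over $Gr(m,N)$ associated with $\det\Tc_1$; since forming the homotopy fibre commutes with the filtered colimit, $\varinjlim_N SGr(m,N)$ is the homotopy fibre of the determinant map $BGL_m\to B\Gm$, i.e.\ a model for $BSL_m$. As $A^{*,*}(-)$, hence $A^*(-)=A^{*,*}(-)/(1-\eta)$, is representable, there is a Milnor exact sequence
\[
0\longrightarrow {\varprojlim_N}^{1}\,A^{*-1}(SGr(m,N))\longrightarrow A^*(BSL_m)\longrightarrow \varprojlim_N A^*(SGr(m,N))\longrightarrow 0,
\]
and one may restrict the index set to the cofinal subsystem of $N$ with $N-m$ even, so that Theorem~\ref{thm_SGr} applies at each level.

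First I would record what Theorem~\ref{thm_SGr} gives: for $m=2n$ each $A^*(SGr(2n,N))$ is a truncated polynomial algebra over $A^*(pt)$ on the characteristic classes $b_1,\dots,b_{n-1}$ of $\Tc_1$ together with the Euler class $e=e(\Tc_1)$, with an explicit $A^*(pt)$-basis of monomials of bounded degree; for $m=2n+1$ the same holds with generators $b_1,\dots,b_n$ and no Euler class (the tautological bundle now having odd rank, compare the absence of an $e(\Tc_2)$-summand in $A^*(SGr(2,2n+1))$). By the splitting principle, Theorem~\ref{thm_split}, the $b_i$ and $e$ are the standard characteristic classes of $\Tc_1$ (symmetric functions in Euler classes of the rank-$2$ special linear summands, and, for even rank, their product), hence restrictions of universal classes and compatible under the closed embeddings $SGr(m,N)\hookrightarrow SGr(m,N+2)$.

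Next I would check surjectivity of the restriction maps $A^*(SGr(m,N+2))\to A^*(SGr(m,N))$: the generating classes at level $N+2$ restrict to the generators at level $N$, which by Theorem~\ref{thm_SGr} already generate over $A^*(pt)$, so the maps are surjective, in particular Mittag--Leffler. Hence the $\varprojlim^1$-term vanishes and $A^*(BSL_m)=\varprojlim_N A^*(SGr(m,N))$. To identify the limit, fix a total degree $d$: for $N$ large the basis monomials of $A^*(SGr(m,N))$ of degree $\le d$ are exactly the monomials in $b_1,\dots,b_{n-1},e$ (resp.\ $b_1,\dots,b_n$) of degree $\le d$, subject to no relations, the truncation relations of Theorem~\ref{thm_SGr} having been pushed into degrees that grow with $N$. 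Since each graded component is free over $A^*(pt)$ and the $b_i$ and $e$ have positive degree, so that only finitely many monomials occur in each total degree, the inverse limit is computed degree by degree and equals the graded completion of $A^*(pt)[b_1,\dots,b_{n-1},e]$, respectively $A^*(pt)[b_1,\dots,b_n]$. By definition this graded completion is the homogeneous power series ring $A^*(pt)[[b_1,\dots,b_{n-1},e]]_h$, respectively $A^*(pt)[[b_1,\dots,b_n]]_h$, which is the content of Theorem~\ref{thm_BSL}.

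The main obstacle is the uniform control of the degree filtration along the system: what is needed is not merely that each $A^*(SGr(m,N))$ is \emph{abstractly} a truncated polynomial algebra, but that the restriction maps identify its degree-$\le d$ part with the degree-$\le d$ part of one \emph{fixed} polynomial algebra once $N\gg d$, with the truncation ideal escaping to infinity --- equivalently, that no relation among bounded-degree monomials in the $b_i$ and $e$ survives all levels. Securing this, together with the $\varprojlim^1$-vanishing, is where the explicit bases of Theorem~\ref{thm_SGr} and the splitting principle of Theorem~\ref{thm_split} do the real work. A secondary point is the even/odd dichotomy: for rank $2n$ the Euler class persists as an independent generator, its square being merely a new power series (the analogue of $p_n=e^2$) rather than a forced relation, whereas for rank $2n+1$ the Euler class of the tautological bundle is $2$-torsion and contributes nothing, leaving only the classes $b_i$.
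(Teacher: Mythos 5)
Your even case is essentially the paper's argument (Milnor exact sequence, Mittag--Leffler via Theorem~\ref{thm_SGr}, limit identified with the homogeneous power series ring), but the odd case contains a genuine gap. You invoke Theorem~\ref{thm_SGr} for $SGr(2n+1,N)$, claiming it is a truncated polynomial algebra on $b_1,\dots,b_n$ of $\Tc_1$; the theorem gives no such thing. Both of its cases require the \emph{first} index to be even, i.e.\ $\rank\Tc_1$ even, and the paper explicitly remarks that no comparable description of $A^*(SGr(2m+1,2n))$ in terms of Euler and Borel classes is available. One can reach $SGr(2n+1,2m+1)$ only through the duality $SGr(2n+1,2m+1)\cong SGr(2m-2n,2m+1)$, but then the generators are characteristic classes of $\Tc_2$, whose rank grows with $m$; its Euler class is unstable (it restricts to $e(\Tc_2\oplus\triv^2)=0$ under $SGr(2n+1,2m+1)\hookrightarrow SGr(2n+1,2m+3)$), so your blanket claim that the restriction maps are surjective fails here, and identifying the stable images requires expressing $b_i(\Tc_2)$ through $b_i(\Tc_1)$ --- none of which is in your proposal. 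The paper avoids this entirely: it first proves the $BSL_{2n+2}$ statement, then deduces the odd case from the Gysin sequence for the tautological bundle over $BSL_{2n+2}$ (Lemma~\ref{lemm_gysinBSL}), using that $e$ is not a zero divisor in $A^*(pt)[[b_1,\dots,b_n,e]]_h$ to get the short exact sequence $0\to A^{*-2n-2}(BSL_{2n+2})\xrightarrow{\cup e}A^*(BSL_{2n+2})\to A^*(BSL_{2n+1})\to 0$ and then kills $e$. Constructing that Gysin sequence at the level of the ind-scheme $BSL_{2n}$ is itself a step (stabilizing Thom classes, identifying $\Tc^0$ with $BSL_{2n-1}$ up to an $\A^\infty$-bundle), which your proposal does not supply.

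Two smaller points. First, even in the even case your choice of cofinal system ($N-m$ even, i.e.\ $SGr(2n,2M)$) puts you in case (1) of Theorem~\ref{thm_SGr}, where the extra generator $e'=e(\Tc_2)$ is again unstable, so the restriction maps are not surjective as you assert; Mittag--Leffler can still be checked via stabilizing images, but the clean route, taken in the paper, is the odd-codimension system $SGr(2n,2m+1)$, where the generators $b_i,e$ of case (2) do restrict onto generators and surjectivity is immediate. Second, the Euler class of an odd-rank special linear bundle is not merely ``$2$-torsion'' in this setting: after inverting $\eta$ it vanishes outright (this is the corollary following Theorem~\ref{thm_main}), which is the correct reason no class $e$ survives for $BSL_{2n+1}$.
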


Finally, we leave for the forthcoming paper \cite{An} the careful proof of the fact that Witt groups arise from the hermitian $K$-theory in the described above fashion, that is $W^*(X)\cong (BO^{*,*}_\eta(X))^{*,0}$. In the same paper we obtain the following special linear version of the motivic Conner and Floyd theorem.
\begin{thmIntr}
Let $k$ be a field of characteristic different from $2$. Then for every smooth variety $X$ over $k$ there exists an isomorphism
\[
MSL^{*,*}_\eta(X)\otimes_{MSL^{4*,2*}(pt)}W^{2*}(pt)\cong W^{*}(X)[\eta,\eta^{-1}].
\]
\end{thmIntr}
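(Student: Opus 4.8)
The plan is to follow the strategy of the motivic Conner--Floyd theorems for $K$-theory and hermitian $K$-theory \cite{PPR1,PW4}: exhibit $W^*(-)$ as a base change of $\eta$-inverted special linear cobordism along a natural Conner--Floyd map, verify that this base change is again a ring cohomology theory, and recognize the map as an isomorphism using the computations of the present paper. Throughout, the hypothesis $\mathrm{char}\,k\neq 2$ is used for the good behaviour of the derived Witt groups and of hermitian $K$-theory, and in particular for the identification $\BO^{*,*}(X)/\big(1-\eta\big)\cong W^*(X)$ of \cite{An}, which we take as given. The substantial point is a computation on the coefficient ring; everything else is formal.

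\emph{Construction of the Conner--Floyd transformation.} Hermitian $K$-theory $\BO$ carries a special linear orientation, which after inverting $\eta$ and collapsing the $(1,1)$-direction is compatible with the Koszul-complex orientation of the derived Witt groups \cite{Ne2}. By the universality of $MSL$ \cite{PW3} this orientation is classified by a morphism of commutative monoids $\varphi\colon MSL\to\BO$ in $\SH(k)$. Localizing at the stable Hopf map and passing to the collapsed theories $A^*(-)=A^{*,*}(-)/\big(1-\eta\big)$ of Section~6, then composing with the identification of \cite{An}, produces a natural ring homomorphism $MSL^{*,*}(X)/\big(1-\eta\big)\to W^*(X)$. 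On the point this is a ring map $MSL^{4*,2*}(pt)\to W^{2*}(pt)$, and base changing along it yields the natural transformation of $W^{2*}(pt)$-algebras
\[
\mathrm{CF}_X\colon\ MSL^{*,*}(X)/\big(1-\eta\big)\otimes_{MSL^{4*,2*}(pt)}W^{2*}(pt)\ \longrightarrow\ W^{*}(X).
\]

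\emph{The source is a ring cohomology theory.} Here one must show that $-\otimes_{MSL^{4*,2*}(pt)}W^{2*}(pt)$ carries the long exact sequences of $MSL^{*,*}_\eta(-)$ to long exact sequences, i.e. that $W^{2*}(pt)$ is a Landweber-exact module over $MSL^{4*,2*}(pt)/\big(1-\eta\big)$. For this I would first compute $MSL^{*,*}_\eta(pt)$, using Morel's isomorphism $\bigoplus_n\pi^{n,n}(\Spec k)[\eta^{-1}]\cong W(k)[\eta,\eta^{-1}]$ \cite{Mor2} together with the computation of $A^*(BSL_n)$ (Theorem~\ref{thm_BSL}); one expects a polynomial algebra over $W(k)[\eta^{\pm1}]$ on the special linear Borel classes $b_1,b_2,\dots$, with $W(k)$ recovered as the quotient by all the $b_i$. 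It then remains to check that $b_1,b_2,\dots$ act as a regular sequence on $MSL^{*,*}_\eta(X)$ for every smooth $X$; granting this, the higher $\tor$-groups vanish, the functor above is exact, and by Brown representability its source is a representable ring cohomology theory carrying the special linear orientation inherited from $MSL$ and with $\eta$ invertible. This regularity of the Borel classes is the main obstacle: it is the only input that does not follow formally from what is established here and in \cite{An}.

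\emph{$\mathrm{CF}_X$ is an isomorphism.} Both the source and the target of $\mathrm{CF}$ are now representable, special linearly oriented, $\eta$-inverted ring cohomology theories, so all the results of the present paper apply to both; and since $\varphi$ is a morphism of special linearly oriented ring spectra, $\mathrm{CF}$ sends Euler classes to Euler classes and Borel classes to Borel classes, hence is compatible with the projective bundle theorem (Theorem~\ref{thm_main}), the computations of the special linear Grassmannians $SGr(2,n)$ and $SGr(m,n)$ (Theorem~\ref{thm_SGr}), the splitting principle (Theorem~\ref{thm_split}) and the computation of $A^*(BSL_n)$ (Theorem~\ref{thm_BSL}). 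Reducing an arbitrary special linear bundle to a sum of rank-two bundles via the splitting principle, and using the relative (Grassmann-bundle) versions of these computations to strip off the free-module generators one at a time, the assertion for general $X$ reduces by d\'evissage to the case $X=pt$, where the tensor product collapses to $W^{*}(pt)$ by the coefficient computation of the previous step. Alternatively, since $MSL_\eta$ and $\BO_\eta$ are cellular spectra, it suffices to verify that $\mathrm{CF}$ is an isomorphism on the motivic spheres, where it is once more the coefficient statement; either way, modulo the regularity of the $b_i$ the proof is complete.
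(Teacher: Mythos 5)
You should know at the outset that the paper you were given does not prove this statement: it appears only in the introduction, and the author explicitly defers both it and the identification $W^*(X)\cong \BO^{*,*}(X)/\big(1-\eta\big)$ to the forthcoming paper \cite{An}. So there is no proof in this text to compare yours against, and your argument has to stand on its own. The formal part of your proposal is fine: producing the monoid map $MSL\to\BO$ from the universality of $MSL$ \cite{PW3}, inverting $\eta$, collapsing the $(1,1)$-direction and composing with the identification from \cite{An} does give the natural transformation $\mathrm{CF}_X$, and this is certainly the intended shape of the theorem.

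The genuine gap is the exactness step, and it is larger than you acknowledge. Your route to it conflates two very different objects: Theorem~\ref{thm_BSL} computes $A^*(BSL_n)$ for an arbitrary SL-oriented theory with $\eta$ inverted as a homogeneous power series algebra over the \emph{unknown} coefficient ring $A^*(pt)$; it says nothing whatsoever about $MSL^{*,*}_\eta(pt)$ itself, which is a cobordism-ring-type object (classes of cobordisms, not characteristic classes of bundles) whose computation is of the same order of difficulty as the Hopkins--Morel-type description of $MGL^{2*,*}(pt)$ and is completely inaccessible to the Grassmannian and flag-variety computations of this paper. Morel's theorem only identifies the $\eta$-inverted stable cohomotopy of the point, not $MSL_\eta$. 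So the assertion that $MSL^{*,*}_\eta(pt)$ is polynomial over $W(k)[\eta^{\pm1}]$ ``on the Borel classes'' with $W^{2*}(pt)$ as the quotient is not even well posed as stated, let alone established, and the regularity of those putative generators on $MSL^{*,*}_\eta(X)$ for every smooth $X$ (your Landweber-exactness input) is precisely the hard content of the theorem, assumed rather than proved. Everything downstream --- representability of the source, cellularity, the d\'evissage or sphere-by-sphere comparison --- rests on that assumption, and note also that reduction to $X=pt$ for an arbitrary smooth variety needs the cellular-spectra argument, not the Grassmann-bundle d\'evissage, since $W^*(X)$ of a general $X$ is not built from the coefficient computations here. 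In short: the skeleton is plausible, but the one step you flag as ``the main obstacle'' is the theorem, which is exactly why the author proves it in \cite{An} by other means rather than deriving it from the results of this paper.
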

Another application of the developed technique lies in the field of the equivariant Witt groups and we are going to address it in another paper.

{\it Acknowledgement.} The author wishes to express his sincere gratitude to I.~Panin for the introduction to the beautiful world of $\A^1$-homotopy theory and numerous discussions on the subject of this paper. This research is supported by RFBR grants 10-01-00551-a, 12-01-31100 and 12-01-33057 and by the Chebyshev Laboratory  (Department of Mathematics and Mechanics, St. Petersburg State University) under RF Government grant 11.G34.31.0026. 

\section{Preliminaries on $\SH(k)$ and ring cohomology theories.}
Let $k$ be a field of characteristic different from $2$ and let $Sm/k$ be the category of smooth varieties over $k$.

A motivic space over $k$ is a simplicial presheaf on $Sm/k$. Each $X\in Sm/k$ defines an unpointed motivic space $Hom_{Sm/k}(-,X)$ constant in the simplicial direction. We will often write $pt$ for the $\Spec k$ regarded as a motivic space. 

We use the injective model structure on the category of the pointed motivic spaces $M_\bullet(k)$.
Inverting the weak motivic equivalences in $M_\bullet(k)$ gives the pointed motivic unstable homotopy category $H_\bullet (k)$.

Let $T=\A^1/(\A^1-\{0\})$ be the Morel-Voevodsky object. A $T$-spectrum $M$ \cite{Jar} is a sequence of pointed motivic spaces $(M_0,M_1,M_2,\dots)$ equipped with the structural maps $\sigma_n  \colon T\wedge M_n\to M_{n+1}$. A map of $T$-spectra is a sequence of maps of pointed motivic spaces which is compatible with the structure maps. We write $MS(k)$ for the category of $T$-spectra. Inverting the stable motivic weak equivalences as in \cite{Jar} gives the motivic stable homotopy category $\SH(k)$.

A pointed motivic space $X$ gives rise to a suspension $T$-spectrum $\Sigma^{\infty}_T X$. Set $\SSp=\Sigma^{\infty}_T (pt_+)$ for the spherical spectrum. Both $H_\bullet(k)$ and $\SH(k)$ are equipped with symmetric monoidal structures $(\wedge,pt_+)$ and $(\wedge,\SSp)$ respectively and
\[
\Sigma^{\infty}_T\colon H_\bullet(k)\to \SH (k)
\]
is a strict symmetric monoidal functor. 

Recall that there are two spheres in $M_\bullet(k)$, the simplicial one $S^{1,0}=S_s^1= \Delta^1/\partial(\Delta^1)$ and $S^{1,1}=(\Gm,1)$. For the integers $p,q\ge 0$ we write $S^{p+q,q}$ for $(\Gm,1)^{\wedge q}\wedge (S_s^1)^{\wedge p}$ and $\Sigma^{p+q,q}$ for the suspension functor $-\wedge S^{p+q,q}$. This functor becomes invertible in the stable homotopy category $\SH(k)$, so we extend the notation to arbitrary integers $p,q$ in an obvious way. 

Any $T$-spectrum $A$ defines a bigraded cohomology theory on the category of pointed motivic spaces. Namely, for a pointed space $(X,x)$ one sets
$$
A^{p,q}(X,x)=Hom_{\SH(k)}(\Sigma^\infty_T (X,x),\Sigma^{p,q}A)
$$
and $A^{*,*}(X,x)=\bigoplus_{p,q}A^{p,q}(X,x)$. In case of $i-j,j\ge 0$ one has a canonical suspension isomorphism $A^{p,q}(X,x)\cong A^{p+i,q+j}(\Sigma^{i,j}(X,x))$ induced by the shuffling isomorphism $S^{p,q}\wedge S^{i,j}\cong S^{p+i,q+j}$. In the motivic homotopy category there is a canonical isomorphism $T\cong S^{2,1}$, we write 
\[
\Sigma_T\colon A^{*,*}(X)\xrightarrow{\simeq} A^{*+2,*+1}(X\wedge T)
\]
for the corresponding suspension isomorphism. See definition~\ref{def_rho} in Section~5 for the details. 

For an unpointed space $X$ we set $A^{p,q}(X)=A^{p,q}(X_+,+)$ with $A^{*,*}(X)$ defined accordingly. Set $\pi^{i,j}(X)=\SSp^{i,j}(X)$ to be the stable cohomotopy groups of $X$. 

We can regard smooth varieties as unpointed motivic spaces and obtain the groups $A^{p,q}(X)$. Given a closed embedding $i\colon Z\to X$ of varieties we write $Th(i)$ for $X/(X-Z)$. For a vector bundle $E\to X$ set $Th(E)=E/(E-X)$ to be the Thom space of $E$.

A commutative ring $T$-spectrum is a commutative monoid $(A,m,\unit)$ in $(\SH(k),\wedge,\SSp)$. The cohomology theory defined by a commutative $T$-spectrum is a ring cohomology theory satisfying a certain bigraded commutativity condition described by Morel.

We recall the essential properties of the cohomology theories represented by a commutative ring $T$-spectrum $A$.

(1) \textit{Localization:} for a closed embedding of varieties $i\colon Z\to X$ with a smooth $X$ and an open complement $j\colon U\to X$ put $z\colon X\to Th(i)=X/U$ for the canonical quotient map. Then we have a long exact sequence
$$
\xrightarrow{\partial} A^{*,*}(Th(i))\xrightarrow{z^A} A^{*,*}(X)\xrightarrow{j^A} A^{*,*}(U)\xrightarrow{\partial} A^{*+1,*}(Th(i))\xrightarrow{z^A}
$$
It is a special case of the cofiber long exact sequence.

(2) \textit{Nisnevich excision:} consider a Cartesian square of smooth varieties
$$
\xymatrix{
Z'\ar[r]^{i'} \ar[d]^{f'} & X' \ar[d]^{f} \\
Z\ar[r]^{i}  & X
}
$$
where $i$ is a closed embedding, $f$ is etale and $f'$ is an isomorphism. Then for the induced morphism $g\colon Th(i')\to Th(i)$ the corresponding morphism $g^A\colon A^{*,*}(Th(i))\to A^{*,*}(Th(i'))$ is an isomorphism. It follows from the fact that $g$ is an isomorphism in the homotopy category.

(3) \textit{Homotopy invariance:} for an $\A^n$-bundle $p\colon E\to X$ over a variety $X$ the induced homomorphism $p^A\colon A^{*,*}(X)\to A^{*,*}(E)$ is an isomorphism.

(4) \textit{Mayer-Vietoris:} if $X=U_1\cup U_2$ is a union of two open subsets $U_1$ and $U_2$ then there is a natural long exact sequence
$$
\to A^{*,*}(X)\to A^{*,*}(U_1)\oplus A^{*,*}(U_2)\to A^{*,*}(U_1\cap U_2)\to A^{*+1,*}(X)\to
$$

(5) \textit{Cup-product:} for a motivic space $Y$ we have a functorial graded ring structure
$$
\cup\colon A^{*,*}(Y)\times A^{*,*}(Y)\to A^{*,*}(Y).
$$
Moreover, let $i_1\colon Z_1\to X$ and $i_2\colon Z_2\to X$ be closed embeddings and put $i_{12}\colon Z_1\cap Z_2\to X$. Then we have a functorial, bilinear and associative cup-product
$$
\cup \colon A^{*,*}(Th(i_1))\times A^{*,*}(Th(i_2)) \to A^{*,*}(Th(i_{12})).
$$
In particular, setting $Z_1=X$ we obtain an $A^{*,*}(X)$-module structure on $A^{*,*}(Th(i_2))$. All the morphisms in the localization sequence are homomorphisms of $A^{*,*}(X)$-modules.

We will sometimes omit $\cup$ from the notation.

(6) \textit{Module structure over stable cohomotopy groups:} for every motivic space $Y$ we have a homomorphism of graded rings $\pi^{*,*}(Y)\to A^{*,*}(Y)$, which defines a $\pi^{*,*}(pt)$-module structure on $A^{*,*}(Y)$. For a smooth variety $X$ the ring $A^{*,*}(X)$ is a graded $\pi^{*,*}(pt)$-algebra via $\pi^{*,*}(pt)\to \pi^{*,*}(X)\to A^{*,*}(X)$.

(7) \textit{Graded $\epsilon$-commutativity} \cite{Mor1}: let $\epsilon\in \pi^{0,0}(pt)$ be the element corresponding under the suspension isomorphism to the morphism $T\to T, x\mapsto -x$. Then for every motivic space $X$ and $a\in A^{i,j}(X), b\in A^{p,q}(X)$ we have
$$
ab =(-1)^{ip}\epsilon^{jq}ba.
$$
Recall that $\epsilon^2=1$.

\section{Special linear orientation.}
In this section we recall the notion of a special linear orientation introduced in \cite{PW3} and establish some of its basic properties.

\begin{definition}
A \textit{special linear bundle} over a variety $X$ is a pair $(E, \lambda)$ with $E\to X$ a vector bundle and $\lambda \colon \det E\xrightarrow{\simeq}\triv_X$ an isomorphism of line bundles. An isomorphism $\phi\colon (E, \lambda)\xrightarrow{\simeq}(E' , \lambda' )$ of special linear vector bundles is an isomorphism $\phi\colon E\xrightarrow{\simeq} E'$ of vector bundles such that $\lambda' \circ (\det \phi)= \lambda$. For a special linear bundle $\Tc=(E,\lambda)$ we usually denote by the same letter $\Tc$ the total space of the bundle $E$.
\end{definition}

\begin{definition}
Consider a trivialized $\rank n$ bundle $\triv_X^n$ over a smooth variety $X$. There is a canonical trivialization $\det \triv^n_X\xrightarrow{\simeq} \triv_X$. We denote the corresponding special linear bundle by $(\triv_X^n, 1)$ and refer to it as the \textit{trivialized special linear bundle}.
\end{definition}

\begin{lemma}
\label{lem_triv_push}
Let $(E,\lambda)$ be a special linear bundle over a smooth variety $X$ such that $E\cong \triv_X^n$. Then there exists an isomorphism of special linear bundles
$$
\phi \colon (E,\lambda)\xrightarrow{\simeq} (\triv_X^n,1).
$$
\end{lemma}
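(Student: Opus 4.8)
The plan is to reduce the statement to the elementary fact that the determinant homomorphism $\det\colon GL_n(\mathcal{O}(X))\to\mathcal{O}(X)^*$ is split by the inclusion of diagonal matrices, so that any discrepancy between two trivializations of a determinant line bundle can be absorbed into an automorphism of $\triv_X^n$.

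First, by hypothesis fix an isomorphism of vector bundles $\psi\colon E\xrightarrow{\simeq}\triv_X^n$, and write $c\colon\det\triv_X^n\xrightarrow{\simeq}\triv_X$ for the canonical trivialization, so that $(\triv_X^n,1)$ is by definition $(\triv_X^n,c)$. The composite
\[
u=c\circ(\det\psi)\circ\lambda^{-1}\colon\triv_X\xrightarrow{\simeq}\triv_X
\]
is an automorphism of the trivial line bundle, hence multiplication by an invertible global function $u\in\mathcal{O}(X)^*$; equivalently $c\circ\det\psi=u\cdot\lambda$ as isomorphisms $\det E\to\triv_X$.

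Next, let $g=\operatorname{diag}(u^{-1},1,\dots,1)\in GL_n(\mathcal{O}(X))$, viewed as an automorphism of $\triv_X^n$, and set $\phi=g\circ\psi\colon E\xrightarrow{\simeq}\triv_X^n$. Since $\det$ is multiplicative and $\det g$ corresponds under $c$ to multiplication by $u^{-1}$, one computes
\[
c\circ\det\phi=c\circ(\det g)\circ(\det\psi)=u^{-1}\cdot\bigl(c\circ\det\psi\bigr)=u^{-1}\cdot u\cdot\lambda=\lambda,
\]
which is exactly the condition $1\circ(\det\phi)=\lambda$ that makes $\phi$ an isomorphism of special linear bundles $(E,\lambda)\xrightarrow{\simeq}(\triv_X^n,1)$.

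There is essentially no serious obstacle here; the only thing to be careful about is the bookkeeping of the canonical identifications involved — the canonical trivialization $\det\triv_X^n\cong\triv_X$, the behaviour of $\det$ under composition, and the direction of the condition $\lambda'\circ\det\phi=\lambda$ in the definition of a morphism of special linear bundles. Note also that the argument needs no affineness hypothesis on $X$, as it only manipulates global units and global automorphisms of a trivial bundle; and if one wishes to avoid the explicit diagonal matrix, it suffices to invoke surjectivity of $\det\colon GL_n(\mathcal{O}(X))\to\mathcal{O}(X)^*$ to produce a $g$ with $\det g=u^{-1}$.
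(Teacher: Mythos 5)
Your proof is correct. With the paper's convention that an isomorphism $(E,\lambda)\to(E',\lambda')$ of special linear bundles must satisfy $\lambda'\circ(\det\phi)=\lambda$, your computation $c\circ\det\phi=u^{-1}\cdot(c\circ\det\psi)=u^{-1}\cdot u\cdot\lambda=\lambda$ verifies exactly the required identity, and lifting the unit $u^{-1}\in\mathcal{O}(X)^*$ to $g=\operatorname{diag}(u^{-1},1,\dots,1)$ is legitimate with no affineness hypothesis. The difference from the paper is one of packaging rather than substance: the paper feeds the exact sequence $1\to SL_n\to GL_n\xrightarrow{\det}\Gm\to 1$ into the exact sequence of pointed sets $H^0(X,GL_n)\to H^0(X,\Gm)\to H^1(X,SL_n)\to H^1(X,GL_n)$ and uses the splitting of $\det$ to conclude that the kernel of the last map is trivial, i.e.\ that a special linear bundle with trivial underlying vector bundle is trivial as a special linear bundle. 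You exploit the very same key fact --- $\det$ is split, so every global unit is the determinant of a global automorphism of $\triv_X^n$ --- but only at the $H^0$ level, constructing $\phi$ explicitly; this makes the argument self-contained, since it avoids invoking the identification of $H^1(X,SL_n)$ with isomorphism classes of special linear bundles and the exactness of nonabelian cohomology, on which the paper's one-line conclusion implicitly relies. What the cohomological formulation buys in exchange is the general torsor-theoretic statement ($\ker i=\{*\}$), of which the lemma is the concrete instance.
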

\begin{proof}
An exact sequence of algebraic groups
$$
1\to SL_n \to GL_n \xrightarrow{\det} \Gm \to 1
$$
provides an exact sequence of pointed sets
$$
H^0(X,GL_n)\xrightarrow{p} H^0(X,\Gm)\to H^1(X,\SL_n) \xrightarrow{i} H^1(X,GL_n)
$$
There is a splitting $\Gm \to GL_n$ for $\det$, so $p$ is surjective. Hence we have $\ker i =\{*\}$ and this means that, up to an isomorphism of special linear bundles, there exists only one trivialization $\lambda\colon \det\triv^n_X\to \triv_X$.
\end{proof}

\begin{lemma}
Let $E_1$ be a subbundle of a vector bundle $E$ over a smooth variety $X$. Then there are canonical isomorphisms
\begin{enumerate}
\item
$\det E_1 \otimes \det (E/E_1) \cong \det E$,
\item
$\det E^\vee \cong (\det E)^\vee$.
\end{enumerate}
\label{quottriv}
\end{lemma}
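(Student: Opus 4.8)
\emph{Proof proposal.} Both isomorphisms are the vector-bundle incarnations of elementary facts of multilinear algebra, and in each case the plan is the same: exhibit a canonical morphism of line bundles built out of wedge products and evaluation maps, and then verify it is an isomorphism by passing to fibres, where it becomes the corresponding statement over a field. Throughout I will use that a morphism of line bundles on $X$ which is an isomorphism on every fibre is an isomorphism, since in a local trivialization it is multiplication by a nowhere-vanishing regular function.

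For (1) I would set $k=\rank E_1$ and $m=\rank (E/E_1)$, so that $\rank E=k+m$, and consider the composite
\[
\Lambda^{k}E_1\otimes\Lambda^{m}E\longrightarrow\Lambda^{k}E\otimes\Lambda^{m}E\xrightarrow{\ \wedge\ }\Lambda^{k+m}E,
\]
the first arrow induced by the inclusion $E_1\hookrightarrow E$ and the second being the wedge product. The point to check is that this composite factors through the projection $\Lambda^{m}E\twoheadrightarrow\Lambda^{m}(E/E_1)$ in the second variable: the kernel of that projection is spanned by elements $v\wedge\tau$ with $v\in E_1$ and $\tau\in\Lambda^{m-1}E$, and for any $\omega\in\Lambda^{k}E_1$ one has $\omega\wedge v\wedge\tau=0$ because $\omega\wedge v\in\Lambda^{k+1}E_1=0$, the rank of $E_1$ being $k$. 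This produces a canonical morphism of line bundles
\[
\det E_1\otimes\det(E/E_1)=\Lambda^{k}E_1\otimes\Lambda^{m}(E/E_1)\longrightarrow\Lambda^{k+m}E=\det E,
\]
which involves no local choices. To finish I would check it fibrewise: over a point, pick a basis $e_1,\dots,e_k$ of the fibre of $E_1$ and extend it to a basis $e_1,\dots,e_{k+m}$ of the fibre of $E$; then the map sends $(e_1\wedge\cdots\wedge e_k)\otimes(\bar e_{k+1}\wedge\cdots\wedge\bar e_{k+m})$ to $e_1\wedge\cdots\wedge e_{k+m}\neq0$, so it is an isomorphism.

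For (2), writing $n=\rank E$, I would use the evaluation pairing
\[
\Lambda^{n}(E^\vee)\otimes\Lambda^{n}E\longrightarrow\triv_X,\qquad(\varphi_1\wedge\cdots\wedge\varphi_n)\otimes(v_1\wedge\cdots\wedge v_n)\longmapsto\det\big(\varphi_i(v_j)\big)_{i,j},
\]
which is well defined and canonical; on each fibre it is the standard perfect pairing between the top exterior powers of a finite-dimensional vector space and its dual, hence it is a perfect pairing of line bundles and the induced morphism $\det E^\vee=\Lambda^{n}(E^\vee)\to(\Lambda^{n}E)^\vee=(\det E)^\vee$ is an isomorphism.

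The computations here are entirely routine; the only place that needs a modicum of care — and the closest thing to an obstacle — is the bookkeeping showing that the wedge-product morphism genuinely descends to $\Lambda^{m}(E/E_1)$, and the observation that, being assembled from natural operations on bundles, the resulting maps are independent of all trivializations and therefore globally defined and canonical (functorial in the pair $E_1\subset E$).
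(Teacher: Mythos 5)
Your proposal is correct and follows essentially the same route as the paper: both isomorphisms are induced by the canonical multilinear-algebra maps, namely the wedge-product map $\Lambda^{k}E_1\otimes\Lambda^{m}(E/E_1)\to\Lambda^{k+m}E$ (your descent argument is exactly the well-definedness of the paper's ``choose lifts'' formula) and the determinant evaluation pairing for the duality statement, which is the paper's pairing $\sum_{\sigma}\operatorname{sign}(\sigma)f_{\sigma(1)}(v_1)\cdots f_{\sigma(n)}(v_n)$. The only difference is that you spell out the fibrewise verification that the paper leaves implicit.
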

\begin{proof}
These isomorphisms are induced by the corresponding vector space isomorphisms. In the first case we have
$\Lambda^m V_1 \otimes \Lambda^n (V/V_1) \xrightarrow{\simeq} \Lambda^{m+n}V$ with
$$
v_1\wedge {\dots}\wedge v_m \otimes \overline{w}_1\wedge{\dots}\wedge \overline{w}_n \mapsto v_1\wedge {\dots}\wedge v_m \wedge w_1\wedge {\dots}\wedge w_n .
$$
For the second isomorphism consider the perfect pairing
$$
\phi\colon\Lambda^n V\times \Lambda^n V^\vee \to k
$$
defined by
\[
\phi(v_1\wedge {\dots}\wedge v_n,f_1\wedge {\dots}\wedge f_n)=\sum_{\sigma\in S_n}sign(\sigma)f_{\sigma(1)}(v_1)\cdot{\dots} \cdot f_{\sigma(n)}(v_n).\qedhere
\]
\end{proof}

\begin{definition}
Let $\Tc=(E,\lambda_E)$ be a special linear bundle over a smooth variety $X$ and let $\Tc'=(E',\lambda_{E'})$ with $E'\le E$ be a special linear subbundle. By {Lemma \ref{quottriv}} we have canonical trivializations $\lambda_{E^\vee}\colon \det E^\vee \xrightarrow{\simeq} \triv_X$ and $\lambda_{E/E'}\colon \det (E/E')\xrightarrow{\simeq} \triv_X$. The special linear bundle $\Tc^\vee=(E^\vee,\lambda_{E^\vee})$ is called the \textit{dual special linear bundle} and the special linear bundle $\Tc/\Tc'=(E/E',\lambda_{E/E'})$ is called the \textit{quotient special linear bundle}. For a pair $\Tc_1=(E_1,\lambda_{E_1}), \Tc_2=(E_2,\lambda_{E_2})$ of special linear bundles over a smooth variety $X$ we put $\Tc_1\oplus\Tc_2=(E_1\oplus E_2,\lambda_{E_1}\otimes\lambda_{E_2})$ and refer to it as the \textit{direct sum of special linear bundles}.
\end{definition}

\begin{definition}
Let $A^{*,*}(-)$ be a ring cohomology theory represented by a $T$-spectrum $A$. A \textit{(normalized) special linear orientation} on $A^{*,*}(-)$ is a rule which assigns to every special linear bundle $\Tc$ of rank $n$ over a smooth variety $X$ a class $th(\Tc)\in A^{2n,n}(Th(\Tc))$ satisfying the following conditions \cite[Definition~5.1]{PW3}:
\begin{enumerate}
\item
For an isomorphism $f\colon \Tc\xrightarrow{\simeq} \Tc'$ we have $th(\Tc)=f^Ath(\Tc')$.
\item
For a morphism $r\colon Y\to X$ we have $r^Ath(\Tc)=th(r^*\Tc)$. 
\item
The maps $-\cup th(\Tc)\colon A^{*,*}(X)\to A^{*+2n,*+n}(Th(\Tc))$ are isomorphisms.
\item
We have
$$
th(\Tc_1\oplus \Tc_2)=q_1^Ath(\Tc_1)\cup q_2^Ath(\Tc_2),
$$
where $q_1,q_2$ are projections from $\Tc_1\oplus \Tc_2$ onto its summands. Moreover, for the zero bundle $\mathbf{0}\to pt$ we have $th(\mathbf{0})=1\in A^{0,0}(pt)$.
\item (normalization)
For the trivialized line bundle over a point we have $th(\triv_{pt},1)=\Sigma_T 1\in A^{2,1}(T)$.
\end{enumerate}
The isomorphism $-\cup th(\Tc)$ is the \textit{Thom isomorphism}.
The class $th(\Tc)$ is the \textit{Thom class} of the special linear bundle, and
$$
e(\Tc)=z^Ath(\Tc)\in A^{2n,n}(X)
$$
with natural $z\colon X\to Th(\Tc)$ is its \textit{Euler class}. A ring cohomology theory with a fixed (normalized) special linear orientation is called an {\it $SL$-oriented cohomology theory}.
\end{definition}

\begin{lemma}
\label{lem_eulermult}
Let $A^{*,*}(-)$ be an $SL$-oriented cohomology theory, let $\Tc$ be a special linear bundle over a smooth variety $X$ and let $\Tc_1\le \Tc$ be a special linear subbundle. Then $e(\Tc)=e(\Tc_1)e(\Tc/\Tc_1)$.
\end{lemma}
\begin{proof}
There is an $\mathbb{A}^r$-bundle $p\colon Y\to X$ such that 
\[
p^*\Tc\cong p^*\Tc_1\oplus p^*(\Tc/\Tc_1),
\]
so the claim follows from the homotopy invariance and the multiplicativity of the Euler class with respect to the direct sums. The variety $Y$ could be constructed in the following way. The fiber over a point $x\in X$ consists of the vector subspaces $E|_x\le \Tc|_x$ such that $\Tc|_x = \Tc_1|_x\oplus E|_x$. This construction could be performed locally and then glued into the variety $Y$.
\end{proof}

\begin{rem}
For a rank $2n$ special linear bundle $\Tc$ over a variety $X$ we have $th(\Tc)\in A^{4n,2n}(Th(\Tc))$ and $e(\Tc)\in A^{4n,2n}(X)$, so these classes are universally central.
\end{rem}

Recall that a symplectic bundle is a special linear bundle in a natural way, so having a special linear orientation we have the Thom classes also for symplectic bundles, thus an $SL$-oriented cohomology theory is also symplectically oriented. We recall the definition of the Borel classes theory (cf. \cite[Definition~14.1]{PW1}) that could be developed for a symplectically oriented cohomology theory. Note that our terminology is slightly different from the one used in loc. cit.: we refer to the ``Pontryagin classes'' in the sense of loc. cit. as ``Borel classes''.

\begin{definition}
Let $A^{*,*}(-)$ be a cohomology theory represented by a $T$-spectrum $A$. A \textit{Borel classes theory} on $A^{*,*}(-)$ is a rule which assigns to every symplectic bundle $(E,\phi)$ over every smooth variety $X$ a system of \textit{Borel classes} $b_i(E,\phi)\in A^{4i,2i}(X)$ for all $i\ge 1$ satisfying
\begin{enumerate}
\item
For $(E_1,\phi_1)\cong (E_2,\phi_2)$ we have $b_i(E_1,\phi_1)=b_i(E_2,\phi_2)$ for all $i$.
\item
For a morphism $r\colon Y\to X$ and a symplectic bundle $(E,\phi)$ over $X$ we have $r^A(b_i(E,\phi))=b_i(r^*(E,\phi))$ for all $i$.
\item
For the tautological rank $2$ symplectic bundle $(E,\phi)$ over $$HP^1=Sp_4/(Sp_2\times Sp_2)$$ the elements $1,b_1(E,\phi)$ form an $A^{*,*}(pt)$-basis of $A^{*,*}(HP^1)$.
\item
For a rank $2$ symplectic bundle $(V,\phi)$ over $pt$ we have $b_1(V,\phi)=0$.
\item
For an orthogonal direct sum of symplectic bundles $(E,\phi)\cong (E_1,\phi_1)\perp (E_2,\phi_2)$ we have
$$
b_i(E,\phi)=b_i(E_1,\phi_1)+\sum_{j=1}^{i-1} b_{i-j}(E_1,\phi_1)b_j(E_2,\phi_2)+b_i(E_2,\phi_2)
$$
for all $i$.
\item
For $(E,\phi)$ of rank $2r$ we have $b_i(E,\phi)=0$ for $i>r$.
\end{enumerate}
We set $b_*(E,\phi)=1+\sum_{i=1}^{\infty}b_i(E,\phi)t^i$ to be the \textit{total Borel class}.
\end{definition}

\begin{rem}
For a sympletically oriented cohomology theory $A^{*,*}(-)$ the canonical Borel classes theory could be defined in the following way. For a symplectic vector bundle $(E,\phi)$ of rank $2n$ over a smooth variety $X$ one puts 
\[
b_n(E,\phi)=z^Ath(E,\phi)\in A^{4n,2n}(X)
\]
for the natural map $z\colon X\to Th(E)$. Then one may define the lower Borel classes using the symplectic version of the projective bundle theorem, see \cite{PW1} for the details. Note that since these Borel classes are similar to the symplectic Borel classes in topology and not to the Pontryagin classes, we omit the sign in the above formula for the top Borel class.
\end{rem}

Every oriented cohomology theory possesses a special linear orientation via $th(E,\lambda)=th(E)$, so one can consider $K$-theory or algebraic cobordism represented by $MGL$ as examples. We have two main instances of the theories with a special linear orientation but without a general one. The first one is hermitian $K$-theory \cite{Sch} represented by the spectrum $\BO$ \cite{PW2}. The special linear orientation of $\BO^{*,*}$ via Koszul complexes could be found in \cite{PW2}. The second one is universal in the sense of \cite[Theorem~5.9]{PW3} and represented by the algebraic special linear cobordism spectrum $MSL$ \cite[Definition~4.2]{PW3}.

\begin{definition}
From now on $A^{*,*}(-)$ is an $SL$-oriented ring cohomology theory represented by a commutative monoid in $\SH(k)$.
\end{definition}

\begin{lemma}
For a smooth variety $X$ we have 
\[
th(\triv_X^n,1)=\Sigma_T^{n} 1,\quad th(\triv _X,-1)=\Sigma_T(\epsilon).
\]
\end{lemma}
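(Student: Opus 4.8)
The plan is to derive both identities formally from the five axioms of a normalized special linear orientation, reducing everything to the normalization axiom and to the naturality and multiplicativity of the $T$-suspension isomorphism. First I would settle the rank one case: pulling back along the structure morphism $p\colon X\to pt$ and invoking naturality (axiom~(2)) gives $(\triv_X,1)=p^*(\triv_{pt},1)$, so since the induced morphism of Thom spaces is $p_+\wedge\mathrm{id}_T$ and the $T$-suspension isomorphism is natural, the normalization axiom~(5) yields $th(\triv_X,1)=p^A(\Sigma_T1)=\Sigma_T1$.

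For general $n$ I would note that $(\triv_X^n,1)$ coincides with the $n$-fold direct sum $(\triv_X,1)^{\oplus n}$ — immediate from the description of the canonical isomorphisms in Lemma~\ref{quottriv} on the standard basis, and in any case a consequence of Lemma~\ref{lem_triv_push} — and then apply multiplicativity (axiom~(4)) to obtain $th(\triv_X^n,1)=(\Sigma_T1)^{\cup n}$, the product taken under the identification $Th(\triv_X^n)=X_+\wedge T^{\wedge n}$. The remaining, and only slightly delicate, step is to identify $(\Sigma_T1)^{\cup n}$ with $\Sigma_T^n1$: this is exactly the compatibility of the cup product of axiom~(5) with the iterated $T$-suspension, and amounts to the observation that $\Sigma_T1\in A^{2,1}(T)$ is the $(2,1)$-fold suspension of the unit $\unit\colon\SSp\to A$, so that the unit identity $m\circ(\unit\wedge\unit)=\unit$ forces the external product of iterated suspensions of $\unit$ to be again such a suspension. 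I expect this bookkeeping to be the main, and really the only, point requiring attention.

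For $th(\triv_X,-1)$ I would argue by an automorphism trick. Let $m_{-1}\colon\triv_X\to\triv_X$ be multiplication by $-1$; since on a line bundle $\det m_{-1}=m_{-1}$ and $m_{-1}$ is an involution, $m_{-1}$ is an isomorphism of special linear bundles $(\triv_X,1)\xrightarrow{\simeq}(\triv_X,-1)$. Axiom~(1) then gives $th(\triv_X,-1)=(m_{-1}^A)^{-1}th(\triv_X,1)$, which equals $m_{-1}^A(\Sigma_T1)$ since $m_{-1}$ is an involution. On Thom spaces $m_{-1}$ induces $\mathrm{id}_{X_+}\wedge(T\to T,\ x\mapsto-x)$, and by the very definition of $\epsilon\in\pi^{0,0}(pt)$ the self-map $x\mapsto-x$ of $T$ represents $\epsilon$ under the suspension isomorphism; hence $m_{-1}^A$ multiplies the fundamental class by $\epsilon$, and $th(\triv_X,-1)=\Sigma_T(\epsilon\cdot1)=\Sigma_T\epsilon$. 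Everything here is a direct application of axioms~(1), (2), (4), (5) together with the definition of $\epsilon$.
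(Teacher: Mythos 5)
Your proof is correct and is in substance the same as the paper's, whose entire proof is the remark that the lemma follows from the direct-sum axiom (4), the normalization (5) and functoriality: your rank-one pullback plus multiplicativity spells out the first identity, and the isomorphism $m_{-1}\colon(\triv_X,1)\xrightarrow{\simeq}(\triv_X,-1)$ combined with the definition of $\epsilon$ as the class of $x\mapsto -x$ on $T$ is exactly the "functoriality" step giving the second. No gaps worth flagging (only note that the parenthetical appeal to Lemma~\ref{lem_triv_push} for identifying $(\triv_X^n,1)$ with $(\triv_X,1)^{\oplus n}$ gives merely an abstract isomorphism, so the direct check on the standard basis you give first is the argument to keep).
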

\begin{proof}
The first equality follows from the conditions (4) and (5). For the second equality consider the morphism $f^A\colon Th(\triv_X)\to Th(\triv_X)$ corresponding to the isomorphism of the special linear bundles $(\triv_X,-1)\to (\triv_X,1)$, $v\mapsto -v$. By the very definition we have $f^A(\Sigma_T(1))=\Sigma_T(\epsilon)=\Sigma_T(1)\cup \epsilon$. On the other hand, functoriality of Thom classes together with normalization yields 
\[
th(\triv_X,-1)=f^A(th(\triv_X,1))=f^A(\Sigma_T(1))=\Sigma_T(1)\cup \epsilon=\Sigma_T(\epsilon).\qedhere
\]
\end{proof}

\begin{lemma}
\label{lemm_epsilon}
Let $(E,\lambda_E)$ be a special linear bundle over a smooth variety $X$. Then
$$
e(E,\lambda_E)=\epsilon \cup e(E,-\lambda_E).
$$
\end{lemma}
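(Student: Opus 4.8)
The plan is to reduce the identity to the already-established formulas $th(\triv_X,1)=\Sigma_T 1$ and $th(\triv_X,-1)=\Sigma_T\epsilon$ by passing to a trivial line summand. The key elementary observation is the equality of special linear bundles
\[
(E,\lambda_E)\oplus(\triv_X,1)=(E,-\lambda_E)\oplus(\triv_X,-1),
\]
which holds literally and not merely up to isomorphism: both sides have underlying vector bundle $E\oplus\triv_X$, and the two determinant trivializations agree because $(-\lambda_E)\otimes(-1)=\lambda_E\otimes 1$ as isomorphisms $\det E\otimes\det\triv_X\xrightarrow{\simeq}\triv_X$, the two sign changes cancelling.

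Next I would apply condition~(4) of the special linear orientation to both sides of this equality. Writing $q_1,q_2$ for the projections of $E\oplus\triv_X$ onto its summands, this yields
\[
q_1^A th(E,\lambda_E)\cup q_2^A th(\triv_X,1)=q_1^A th(E,-\lambda_E)\cup q_2^A th(\triv_X,-1)
\]
in $A^{2n+2,n+1}(Th(E\oplus\triv_X))$. Exactly as in the proof of $th(\triv_X^n,1)=\Sigma_T^n 1$, the assignment $\alpha\mapsto q_1^A\alpha\cup q_2^A th(\triv_X,1)$ is the suspension isomorphism $\Sigma_T\colon A^{2n,n}(Th(E))\xrightarrow{\simeq}A^{2n+2,n+1}(Th(E\oplus\triv_X))$ associated with $Th(E\oplus\triv_X)\cong Th(E)\wedge T$. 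By the preceding lemma $th(\triv_X,-1)=\Sigma_T\epsilon=\epsilon\cup th(\triv_X,1)$, so, since $\epsilon\in A^{0,0}(pt)$ is central and $\Sigma_T$ commutes with multiplication by $\epsilon$, the right-hand side of the displayed equation equals $\epsilon\cup\Sigma_T\big(th(E,-\lambda_E)\big)=\Sigma_T\big(\epsilon\cup th(E,-\lambda_E)\big)$. Cancelling the isomorphism $\Sigma_T$ gives $th(E,\lambda_E)=\epsilon\cup th(E,-\lambda_E)$ in $A^{2n,n}(Th(E))$.

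Finally I would pull back along the zero section $z\colon X\to Th(E)$. As $z^A$ is a homomorphism of $A^{*,*}(X)$-modules and $\epsilon\in A^{0,0}(X)$ is the image of the universal class, this gives $e(E,\lambda_E)=z^A th(E,\lambda_E)=\epsilon\cup z^A th(E,-\lambda_E)=\epsilon\cup e(E,-\lambda_E)$, which is the claim. The only step demanding any care is the identification of $\alpha\mapsto q_1^A\alpha\cup q_2^A th(\triv_X,1)$ with the honest suspension isomorphism on $A^{*,*}(Th(E))$, but this is precisely the bookkeeping already carried out for the normalization lemma, so nothing new is needed. One cannot shortcut the argument by feeding the displayed bundle identity directly into Euler-class multiplicativity, since the trivial summands have vanishing Euler class and that relation would merely read $0=0$; the content survives only at the level of Thom classes.
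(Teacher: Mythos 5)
Your proposal is correct and follows essentially the same route as the paper: the equality $(E\oplus\triv_X,\lambda_E\otimes 1)=(E\oplus\triv_X,(-\lambda_E)\otimes(-1))$, multiplicativity of Thom classes, the identification of $-\cup q_2^Ath(\triv_X,1)$ with the suspension isomorphism to cancel it, and pullback along the zero section. The extra care you take in justifying the cancellation of the suspension and the closing remark about why Euler-class multiplicativity alone gives nothing are fine, but no new ideas beyond the paper's argument are involved.
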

\begin{proof}
Consider the bundle $E\oplus \triv_X$ and denote the projections onto the summands by $q_1,q_2$. We have
$$
(E\oplus \triv_X,\lambda_E \otimes 1)=(E\oplus \triv_X,(-\lambda_E) \otimes -1),
$$
hence
$$
q_1^*th(E,\lambda_E)\cup q_2^*\Sigma_T 1=q_1^*th(E,-\lambda_E)\cup q_2^*\Sigma_T \epsilon.
$$
By the suspension isomorphism we obtain
$$
th(E,\lambda_E)=th(E,-\lambda_E)\cup \epsilon,
$$
hence $e(E,\lambda_E)=\epsilon \cup e(E,-\lambda_E)$.
\end{proof}

\begin{lemma}
Let $\Tc$ be a rank $2$ special linear bundle over a smooth variety $X$. Then
$\Tc\cong \Tc^\vee$ and $e(\Tc)=e(\Tc^\vee).$
\end{lemma}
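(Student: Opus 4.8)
The plan is to exhibit an explicit isomorphism of special linear bundles $\phi\colon\Tc\xrightarrow{\simeq}\Tc^\vee$; the equality $e(\Tc)=e(\Tc^\vee)$ will then be formal. Write $\Tc=(E,\lambda)$. Since $E$ has rank $2$, the wedge product $E\otimes E\to\det E$, $v\otimes w\mapsto v\wedge w$, is a nondegenerate alternating pairing; composing with $\lambda\colon\det E\xrightarrow{\simeq}\triv_X$ gives a nondegenerate alternating form $\omega_E\colon E\otimes E\to\triv_X$ (so in fact $\Tc$ is a symplectic bundle, which is the familiar identification $SL_2=Sp_2$). Nondegeneracy of $\omega_E$ yields a globally defined isomorphism $\phi\colon E\xrightarrow{\simeq}E^\vee$ by $\phi(v)(w)=\omega_E(v,w)$, with no local choices needed.

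The substantive point is that $\phi$ respects the trivializations of the determinants, i.e. $\lambda_{E^\vee}\circ\det\phi=\lambda$, where $\lambda_{E^\vee}\colon\det E^\vee\xrightarrow{\simeq}\triv_X$ is the canonical trivialization coming from Lemma~\ref{quottriv} that defines $\Tc^\vee$. Both sides are maps of line bundles built canonically from the data, so the identity may be checked Zariski-locally; over an open set with a frame one may rescale to a frame $e_1,e_2$ of $E$ with $\omega_E(e_1,e_2)=1$. Then $\phi(e_1)=e_2^*$ and $\phi(e_2)=-e_1^*$ for the dual frame, hence $\det\phi(e_1\wedge e_2)=e_2^*\wedge(-e_1^*)=e_1^*\wedge e_2^*$, while unwinding the perfect pairing $\Lambda^2V\times\Lambda^2V^\vee\to k$ of Lemma~\ref{quottriv} shows that $\lambda_{E^\vee}$ carries $e_1^*\wedge e_2^*$ to $1=\lambda(e_1\wedge e_2)$. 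This produces the desired isomorphism $\phi\colon\Tc\xrightarrow{\simeq}\Tc^\vee$.

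The only place one must be careful is precisely this sign accounting: the minus sign in $\phi(e_2)=-e_1^*$ has to cancel the skew-commutativity sign of $\wedge$ inside $\det\phi$, so that no factor of $\epsilon$ appears — in contrast with Lemma~\ref{lemm_epsilon}, where changing $\lambda$ to $-\lambda$ really does produce an $\epsilon$. Granting the isomorphism $\phi$, condition~(1) of the special linear orientation gives $th(\Tc)=\phi^A\,th(\Tc^\vee)$; since the induced map $Th(\phi)\colon Th(\Tc)\to Th(\Tc^\vee)$ takes the zero section of $\Tc$ to that of $\Tc^\vee$, pulling back along the zero section $z$ yields $e(\Tc)=z^A th(\Tc)=z^A\phi^A th(\Tc^\vee)=e(\Tc^\vee)$.
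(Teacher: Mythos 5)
Your proof is correct and follows essentially the same route as the paper: use the symplectic form induced by $\lambda$ to get $\phi\colon E\xrightarrow{\simeq}E^\vee$, then verify $\lambda_{E^\vee}\circ\det\phi=\lambda$ locally in a frame with $e_1\wedge e_2\mapsto 1$, with the same sign computation $\det\phi(e_1\wedge e_2)=e_2^\vee\wedge(-e_1^\vee)=e_1^\vee\wedge e_2^\vee$. The only cosmetic difference is that the paper reduces the local check to the standard trivialized special linear bundle via Lemma~\ref{lem_triv_push}, whereas you rescale a local frame directly, and you spell out the final deduction $e(\Tc)=e(\Tc^\vee)$ from naturality of Thom classes, which the paper leaves implicit.
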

\begin{proof}
Set $\Tc=(E,\lambda_E)$. The trivialization $\lambda_E\colon \Lambda^2 E\xrightarrow{\simeq} \triv_X$ defines a symplectic form on $E$ and an isomorphism $\phi\colon E\xrightarrow{\simeq}E^\vee$, thus it is sufficient to check that
$$
\lambda_{E^\vee}\circ \det \phi=\lambda_E.
$$
It could be checked locally, so we can suppose that $E\cong \triv^2_X$ and, in view of Lemma~\ref{lem_triv_push}, $\left(E,\lambda_E\right)\cong \left(\triv^2_X,1\right)$. Fixing a basis $\{e_1,e_2\}$ such that $e_1\wedge e_2=1$ and taking the dual basis $\{e_1^\vee,e_2^\vee\}$ for $(\triv^{2}_X)^{\vee}$ we have
$$\phi(e_1)=(e_1\wedge -)=e_2^\vee,\qquad \phi(e_2)=(e_2\wedge -)=-e_1^\vee.$$
Thus we obtain
$$
\det \phi (e_1\wedge e_2)=e_2^\vee\wedge(-e_1^\vee)=e_1^\vee\wedge e_2^\vee
$$
and
\[
\lambda_{E^\vee}\det \phi (e_1\wedge e_2)=\lambda_{E^\vee}(e_1^\vee\wedge e_2^\vee)=1. \qedhere
\]
\end{proof}

\begin{definition}
For a vector bundle $E$ we denote by $E^0$ the complement to the zero section. For a special linear bundle $\Tc=(E,\lambda)$ we put $\Tc^0=E^0$.
\end{definition}

\begin{definition}
\label{def_Gys1}
Let $\Tc$ be a rank $n$ special linear bundle over a smooth variety $X$. The \textit{Gysin sequence} is a long exact sequence
$$
\xrightarrow{\partial} A^{*-2n,*-n}(X)\xrightarrow{\cup e(\Tc)} A^{*,*}(X)\to A^{*,*}(\Tc^0)\xrightarrow{\partial} A^{*-2n+1,*-n}(X)\to
$$
obtained from the localization sequence for the zero section $X\to \Tc$ via the homotopy invariance and the Thom isomorphism.

\end{definition}

\begin{lemma}
\label{lemm_dual_eu}
Let $(E,\lambda_E)$ be a special linear bundle over a smooth variety $X$.
\begin{enumerate}
\item
Let $\lambda_E '$ be any other trivialization of $\det E$. Then one has
$$
A^{0,0}(X)\cup e(E,\lambda_E)=A^{0,0}(X)\cup e(E,\lambda_E').
$$
\item
For the dual special linear bundle $(E^\vee,\lambda_{E^\vee})$ one has
$$
A^{0,0}(X)\cup e(E,\lambda_E)=A^{0,0}(X)\cup e(E^\vee,\lambda_{E^\vee}).
$$
\end{enumerate}
\end{lemma}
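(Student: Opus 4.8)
We treat the two parts in turn.

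For part (1), the key point is that the Thom space $Th(E)$ depends only on the underlying bundle $E$, so both $th(E,\lambda_E)$ and $th(E,\lambda_E')$ lie in the single group $A^{2n,n}(Th(E))$. By the defining isomorphism property of the Thom class, $-\cup th(E,\lambda_E)\colon A^{*,*}(X)\to A^{*,*}(Th(E))$ is an isomorphism of $A^{*,*}(X)$-modules (associativity of $\cup$, property (5)), so there is a unique $c\in A^{0,0}(X)$ with $th(E,\lambda_E')=c\cup th(E,\lambda_E)$; exchanging the roles of the two trivialisations gives $c'\in A^{0,0}(X)$ with $th(E,\lambda_E)=c'\cup th(E,\lambda_E')$ (and uniqueness forces $cc'=c'c=1$, so $c$ is a unit, although this is not needed below). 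Applying the zero section map $z^A$, which is $A^{*,*}(X)$-linear by properties (1) and (5), yields $e(E,\lambda_E')=c\cup e(E,\lambda_E)$ and $e(E,\lambda_E)=c'\cup e(E,\lambda_E')$, and the two inclusions between $A^{0,0}(X)\cup e(E,\lambda_E)$ and $A^{0,0}(X)\cup e(E,\lambda_E')$ follow at once.

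For part (2) the bundles $E$ and $E^\vee$ are genuinely different, so there is no direct comparison of Thom classes; instead I would compare the complements of the zero sections through an incidence variety. Set
$$Z=\{(v,f)\in E\times_X E^\vee\ :\ f(v)=1\},$$
which is a smooth subvariety of $E\times_X E^\vee$ since the regular function $(v,f)\mapsto f(v)$ is submersive along $Z$. The two projections $\pi_1\colon Z\to E^0$ and $\pi_2\colon Z\to (E^\vee)^0$ are $\A^{n-1}$-bundles: over an open subset of $X$ on which $E$ trivialises, the fibre of $\pi_1$ over $v\neq 0$ is the affine hyperplane $\{f:f(v)=1\}$, and locally over $E^0$ one trivialises by solving for a single coordinate of $f$; symmetrically for $\pi_2$. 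By homotopy invariance $\pi_1^A$ and $\pi_2^A$ are isomorphisms, and since $\pi_1,\pi_2$ commute with the structure maps to $X$ we get
$$\ker\bigl(A^{2n,n}(X)\to A^{2n,n}(E^0)\bigr)=\ker\bigl(A^{2n,n}(X)\to A^{2n,n}(Z)\bigr)=\ker\bigl(A^{2n,n}(X)\to A^{2n,n}((E^\vee)^0)\bigr).$$
By the Gysin sequence of Definition~\ref{def_Gys1} for $(E,\lambda_E)$ the left-hand kernel equals $A^{0,0}(X)\cup e(E,\lambda_E)$, and by the Gysin sequence for $(E^\vee,\lambda_{E^\vee})$ the right-hand kernel equals $A^{0,0}(X)\cup e(E^\vee,\lambda_{E^\vee})$; hence the two submodules coincide.

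The real difficulty is in part (2), and it is conceptual rather than technical. The tempting approach is to relate $e(E,\lambda_E)$ and $e(E^\vee,\lambda_{E^\vee})$ through the metabolic bundle $E\oplus E^\vee$ using multiplicativity of the Euler class (Lemma~\ref{lem_eulermult}), but every such computation only yields relations among \emph{products} $e(E,\lambda_E)\cup e(E^\vee,\lambda_{E^\vee})$ and never separates the two classes; a true identification would seem to require the splitting principle, which is only available later. The point of the argument above is to recognise both $A^{0,0}(X)$-submodules as the kernel of restriction to a complement of a zero section, and to notice that $E^0$ and $(E^\vee)^0$ become $\A^1$-equivalent over $X$ through $Z$; once $Z$ is written down, verifying that $\pi_1$ and $\pi_2$ are affine bundles is routine.
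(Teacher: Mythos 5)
Your proof is correct. For part (2) you have essentially rediscovered the paper's own argument: the paper introduces the same incidence variety $Y=\left\{(v,f)\in E\times_X E^\vee \mid f(v)=1\right\}$, notes that the two projections to $E^0$ and $E^{\vee 0}$ are $\A^{n-1}$-bundles, hence give an isomorphism $A^{*,*}(E^0)\cong A^{*,*}(E^{\vee 0})$ over $A^{*,*}(X)$, and then identifies both submodules with kernels of the restriction maps via the Gysin sequences, exactly as you do. For part (1) your route is genuinely different: the paper runs the same kernel mechanism again, comparing the two Gysin sequences attached to $\lambda_E$ and $\lambda_E'$, which share the restriction map $p^A\colon A^{2n,n}(X)\to A^{2n,n}(E^0)$, so that both submodules are equal to $\ker p^A$; you instead compare the two Thom classes inside the single group $A^{2n,n}(Th(E))$, use the Thom isomorphism to write each as an $A^{0,0}(X)$-multiple of the other, and push the relation down with the $A^{*,*}(X)$-linear map $z^A$. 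Your variant is sound and in fact slightly sharper, since it exhibits $e(E,\lambda_E')=c\cup e(E,\lambda_E)$ with $c$ a unit of $A^{0,0}(X)$ rather than only an equality of cyclic submodules; the paper's version has the advantage of treating both halves of the lemma by one uniform argument that never needs to compare classes living over different total spaces. Your closing remark is also on target: the naive comparison through $E\oplus E^\vee$ only controls products of the two Euler classes, which is why the paper, like you, passes through the incidence variety rather than through any multiplicativity statement.
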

\begin{proof}
Set $n=\rank E$ and denote the projections $E^0\to X$ and $E^{\vee 0}\to X$ by $p$ and $p'$ respectively.
\begin{enumerate}
\item
Consider the Gysin sequences corresponding to the trivializations $\lambda_E$ and $\lambda_E'$.
$$
\xymatrix{
{}\ar[r] &  A^{0,0}(X)\ar[rr]^(0.50){\cup e(E,\lambda_E)} & & A^{2n,n}(X) \ar[r]^{p^A}\ar[d]^{=}& A^{2n,n}(E^0) \ar[d]^{=} \ar[r] & {} \\
{}\ar[r] &  A^{0,0}(X)\ar[rr]^(0.50){\cup e(E,\lambda_E')} & & A^{2n,n}(X) \ar[r]^{p^A}& A^{2n,n}(E^0) \ar[r] & {}
}
$$
We have
$$
A^{0,0}(X)\cup e(E,\lambda_E)=\ker p^A=A^{0,0}(X)\cup e(E,\lambda_{E}').
$$
\item
Consider
$$
Y=\left\{\left.(v,f)\in E\times_X E^\vee\,\right|\, f(v)=1\right\}.
$$
Projections $p_1\colon Y\to E^0$ and $p_2\colon Y\to E^{\vee 0}$ have fibres isomorphic to $\A^{n-1}$, thus
$$
A^{*,*}(E^0)\cong A^{*,*}(Y)\cong A^{*,*}(E^{\vee 0})
$$
and we have a canonical isomorphism $ A^{*,*}(E^0)\cong A^{*,*}(E^{\vee 0})$ over $A^{*,*}(X)$.
Now proceed as in the first part and consider the Gysin sequences.
$$
\xymatrix{
{}\ar[r] &  A^{0,0}(X)\ar[rr]^(0.50){\cup e(E,\lambda_E)} & & A^{2n,n}(X) \ar[r]^{p^A}\ar[d]^{=}& A^{2n,n}(E^0) \ar[d]^{\cong} \ar[r] & {} \\
{}\ar[r] &  A^{0,0}(X)\ar[rr]^(0.50){\cup e(E^\vee,\lambda_{E^\vee})} & & A^{2n,n}(X) \ar[r]^{p'^A}& A^{2n,n}(E^{\vee0}) \ar[r] & {}
}
$$
We have
\[
A^{0,0}(X)\cup e(E,\lambda_E)=\ker p^A=\ker p'^A=A^{0,0}(X)\cup e(E^\vee,\lambda_{E^\vee}). \qedhere
\]

\end{enumerate}
\end{proof}

\begin{lemma}
\label{lemm_triv_section}
Let $\Tc$ be a special linear bundle over a smooth variety $X$ such that there exists a nowhere vanishing section $s\colon X\to \Tc$. Then $e(\Tc)=0$.
\end{lemma}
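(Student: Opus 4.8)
The cleanest approach, and the one I would take first, is homotopy-theoretic; it does not even use the special linear structure beyond the definition of the Euler class. Recall that $e(\Tc)=z^A\,th(\Tc)$, where $z\colon X\to Th(\Tc)$ is the composite of the zero section $X\to\Tc$ with the collapse $\Tc\to\Tc/\Tc^0=Th(\Tc)$. The plan is to show that $z$ is $\A^1$-homotopic to the constant map at the basepoint, so that $z^A=0$ and hence $e(\Tc)=0$. The homotopy is the rescaling of the given section: the morphism of varieties $X\times\A^1\to\Tc$, $(x,t)\mapsto t\cdot s(x)$, composed with $\Tc\to Th(\Tc)$, gives a morphism $X\times\A^1\to Th(\Tc)$ whose restriction at $t=0$ is $z$ and whose restriction at $t=1$ is constant, since $s(x)\in\Tc^0$ is collapsed to the basepoint of $Th(\Tc)$.

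An equivalent argument uses the Gysin sequence of Definition~\ref{def_Gys1}. Writing $n=\rank\Tc$ and $p\colon\Tc^0\to X$ for the projection, the nowhere vanishing section $s$ takes values in $\Tc^0$ and satisfies $p\circ s=\mathrm{id}_X$, so $p^A\colon A^{*,*}(X)\to A^{*,*}(\Tc^0)$ is a split monomorphism. In the Gysin sequence
$$
\cdots\xrightarrow{\partial}A^{*-2n,*-n}(X)\xrightarrow{\cup\,e(\Tc)}A^{*,*}(X)\to A^{*,*}(\Tc^0)\xrightarrow{\partial}\cdots
$$
the second arrow is exactly $p^A$: it is the restriction along $\Tc^0\hookrightarrow\Tc$ followed by the homotopy invariance isomorphism $A^{*,*}(X)\xrightarrow{\simeq}A^{*,*}(\Tc)$ induced by the bundle projection. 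By exactness, $\mathrm{im}(\cup\,e(\Tc))=\ker p^A=0$, so $\cup\,e(\Tc)$ is the zero homomorphism, and applying it to $1\in A^{0,0}(X)$ gives $e(\Tc)=1\cup e(\Tc)=0$.

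I do not expect a genuine obstacle. In the homotopy argument the only point deserving a line of care is that the rescaling really defines an $\A^1$-homotopy of (pointed) morphisms into $Th(\Tc)$, so that $z$ and the constant map induce the same homomorphism on $A^{*,*}$, the latter being zero on the reduced cohomology of $Th(\Tc)$ since it factors through the basepoint. In the Gysin-sequence version the analogous small check is the identification of the second arrow with $p^A$, which is immediate from the construction of the sequence out of the localization sequence of the zero section.
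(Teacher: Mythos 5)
Your second argument is precisely the paper's proof: the paper considers the Gysin sequence, notes that the section splits the restriction map to $A^{*,*}(\Tc^0)$, making it injective, and concludes $e(\Tc)=1\cup e(\Tc)=0$. Your first, rescaling-homotopy argument is a correct alternative that the paper does not use: the morphism $(x,t)\mapsto t\cdot s(x)$ composed with $\Tc\to Th(\Tc)$ does exhibit $z$ as $\A^1$-homotopic to the constant map to the basepoint, so $z^A=0$ on reduced cohomology of $Th(\Tc)$ and hence $e(\Tc)=z^Ath(\Tc)=0$; this is even more direct, bypassing the Gysin sequence (and hence the Thom isomorphism and deformation machinery behind it) entirely.
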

\begin{proof}
Set $\rank \Tc=n$ and consider the Gysin sequence
$$
{\dots}\to A^{0,0}(X)\xrightarrow{\cup e(\Tc)} A^{2n,n}(X) \xrightarrow{j^A} A^{2n,n}(\Tc^0)\to {\dots}
$$
The section $s$ induces a splitting $s^A$ for $j^A$, thus $j^A$ is injective and
\[
e(\Tc)=1\cup e(\Tc)=0.\qedhere
\]
\end{proof}

\section{Pushforwards along closed embeddings.}
In this section we give the construction of the pushforwards along the closed embeddings with special linear normal bundles for an $SL$-oriented cohomology theory. It is quite similar to the construction of such pushforwards for oriented \cite{PS,Ne1} or symplectically oriented \cite{PW1} cohomology theories and twisted Witt groups \cite{Ne2}, so we follow loc. cit. adapting it to the special linear context.

\begin{definition}
Let $i\colon Z\to X$ be a closed embedding of smooth varieties. The \textit{deformation space} $D(Z,X)$ is obtained as follows.
\begin{enumerate}
\item
Consider $X\times \A^1$.
\item
Blow-up it along $Z\times 0$.
\item
Remove the blow-up of $X\times 0$ along $Z\times 0$.
\end{enumerate}
This construction produces a smooth variety $D(Z,X)$ over $\A^1$. The fiber over $0$ is canonically isomorphic to $N_i$ while the fiber over $1$ is isomorphic to $X$ and we have the corresponding closed embeddings $i_0\colon N_i\to D(Z,X)$ and $i_1\colon X\to D(Z,X)$. There is a closed embedding $z\colon Z\times \A^1 \to D(Z,X)$ such that over $0$ it coincides with the zero section $s\colon Z\to N_i$ of the normal bundle and over $1$ it coincides with the closed embedding $i\colon Z\to X$. At last, we have a projection $p\colon D(Z,X) \to X$.

Thus we have homomorphisms of $A^{*,*}(X)$-modules (via $p^A$)
$$
\xymatrix{
A^{*,*}(Th(N_i)) & A^{*,*}(Th(z)) \ar[l]_(0.45){i_0^A} \ar[r]^{i_1^A} & A^{*,*}(Th(i)).
}
$$
These homomorphisms are isomorphisms since in the homotopy category $H_\bullet (k)$ we have isomorphisms $i_0\colon Th(N_i)\cong Th(z)$ and $i_1\colon Th(i)\cong Th(z)$ \cite[Theorem~2.23]{MV}. We set
$$
d^A_{i}=i_1^A\circ (i_0^A)^{-1}\colon A^{*,*}(Th(N_i))\to A^{*,*}(Th(i))
$$
to be the \textit{deformation to the normal bundle isomorphism}. The functoriality of the deformation space $D(Z,X)$ makes the deformation to the normal bundle isomorphism functorial.
\end{definition}

\begin{definition}
For a closed embedding $i\colon Z\to X$ of smooth varieties a \textit{special linear normal bundle} is a pair $(N_i,\lambda)$ with $N_i$ the normal bundle and $\lambda\colon \det N_i\xrightarrow{\simeq} \triv_Z$ an isomorphism of line bundles.
\end{definition}
\begin{definition}
Let $i\colon Z\to X$ be a closed embedding of smooth varieties with a rank $n$ special linear normal bundle $(N_i,\lambda)$. Denote by $\tilde{\imath}_A$ the composition of the Thom and deformation to the normal bundle isomorphisms,
\[
\tilde{\imath}_A=d^A_i\circ (-\cup th(N_i,\lambda))\colon A^{*,*}(Z)\xrightarrow{\simeq} A^{*+2n,*+n}(Th(i)).
\]
For the inclusion $z\colon X\to Th(i)$ the composition
\[
i_A=z^A\circ \tilde{\imath}_A\colon A^{*,*}(Z)\to A^{*+2n,*+n}(X)
\]
is the \textit{pushforward map}. Note that in general $i_A$ depends on the trivialization of $\det N_i$.
\end{definition}

\begin{rem}
We have an analogous definition of the pushforward map for a closed embedding $i\colon Z\to X$ in every cohomology theory possessing a Thom class for the normal bundle $N_i$. In particular, we have pushforwards in the stable cohomotopy groups for closed embeddings with a trivialized normal bundle $(N_i,\theta)$, where $\theta\colon N_i\xrightarrow{\simeq} \triv_Z^n$ is an isomorphism of vector bundles, since there is a Thom class $th(\triv_Z^n)=\Sigma^n_T 1$ and suspension isomorphism
$$
(-\cup \Sigma^{n}_T 1)\colon \pi^{*,*}(Z)\xrightarrow{\simeq}\pi^{*+2n,*+n}(Th(\triv_Z^n)).
$$
\end{rem}

\begin{definition}
Let $i\colon Z\to X$ be a closed embedding of smooth varieties with a rank $n$ special linear normal bundle. Then using the notation of pushforward maps the localization sequence boils down to
\[
\xrightarrow{\partial} A^{*-2n,*-n}(Z)\xrightarrow{i_A} A^{*,*}(X)\xrightarrow{j^A} A^{*,*}(X-Z)\xrightarrow{\partial} A^{*-2n+1,*-n}(Z)\xrightarrow{i_A} 
\]
We refer to this sequence as \textit{the Gysin sequence}, similar to Definition~\ref{def_Gys1}.
\end{definition}

In the rest of this section we sketch some properties of the pushforward maps. The next lemma is similar to \cite[Proposition~7.4]{PW1}.
\begin{lemma}
\label{lem_transv_push}
Consider the following cartesian diagram with all the involved varieties being smooth.
$$
\xymatrix{
X'=X\times_Y Y' \ar[rr]^(0.6){i'} \ar[d]^{g'}& & Y' \ar[d]^{g}\\
X \ar[rr]^{i}& & Y
}
$$
Let $i,i'$ be closed embeddings with special linear normal bundles $(N_i,\lambda)$ and let $(N_{i'},\lambda')\cong (g'^*N_i,g'^*\lambda)$. Then we have $g^A\tilde{\imath}_A=\tilde{\imath}'_Ag'^A$.
\end{lemma}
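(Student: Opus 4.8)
The plan is to decompose $\tilde{\imath}_A=d^A_i\circ(-\cup th(N_i,\lambda))$, and similarly $\tilde{\imath}'_A=d^A_{i'}\circ(-\cup th(N_{i'},\lambda'))$, and to check the asserted equality one factor at a time: I would exhibit two commuting squares, one for the Thom isomorphisms and one for the deformation-to-the-normal-bundle isomorphisms, whose vertical composites recover $\tilde{\imath}_A$ and $\tilde{\imath}'_A$ while the outer columns are $g'^A$ and $g^A$.

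For the Thom isomorphisms: since the square is Cartesian we have $g^{-1}(X)=X'$, so $g$ sends $Y'-X'$ into $Y-X$ and induces a map $Th(i')\to Th(i)$; likewise the canonical bundle morphism $g'^*N_i\to N_i$ covering $g'$ is a fibrewise isomorphism, hence sends $(g'^*N_i)-X'$ into $N_i-X$, and via the chosen isomorphism $(N_{i'},\lambda')\cong(g'^*N_i,g'^*\lambda)$ it induces a map $\bar{g}'\colon Th(N_{i'})\to Th(N_i)$. Now the naturality of the cup-product (property~(5)) together with the functoriality condition~(2) (and isomorphism-invariance condition~(1)) of the special linear orientation, which give $\bar{g}'^{A}\,th(N_i,\lambda)=th(g'^*(N_i,\lambda))=th(N_{i'},\lambda')$, yield $\bar{g}'^{A}\big(\alpha\cup th(N_i,\lambda)\big)=g'^A(\alpha)\cup th(N_{i'},\lambda')$ for all $\alpha\in A^{*,*}(X)$. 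This is exactly the commutativity of the first square.

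For the deformation isomorphisms: the Cartesian square is a Cartesian morphism of pairs $(X',Y')\to(X,Y)$, and the functoriality of the deformation space noted above furnishes a morphism $D(X',Y')\to D(X,Y)$ over $\A^1$ compatible with the closed embeddings $i_0,i_1,z$. Applying $A^{*,*}(-)$ gives a ladder relating $i_0^A$ and $i_1^A$ for the two deformation spaces, with $\bar{g}'^{A}$ on the normal-bundle end and $g^A$ on the other; since the $i_0^A$ are isomorphisms, this yields $g^A\circ d^A_i=d^A_{i'}\circ\bar{g}'^{A}$. Composing the two squares then gives $g^A\tilde{\imath}_A=d^A_{i'}\bar{g}'^{A}(-\cup th(N_i,\lambda))=d^A_{i'}(-\cup th(N_{i'},\lambda'))g'^A=\tilde{\imath}'_A g'^A$.

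I expect the only delicate point to be the construction of the map $D(X',Y')\to D(X,Y)$ — that is, checking that the base-change morphism $Y'\times\A^1\to Y\times\A^1$ genuinely lifts through the blow-up of $X\times 0$ and through the removal of the blown-up copy of $Y\times 0$, which comes down to identifying the scheme-theoretic preimage of $X\times 0$ with $X'\times 0$ for the Cartesian square of smooth varieties at hand. But this is precisely the functoriality of $D(-,-)$ already invoked when $d^A_i$ was defined, so no essentially new input is required; everything else is the routine naturality of the Thom class, of the cup-product, and of the pullback maps $g^A,g'^A$.
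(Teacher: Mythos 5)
Your proof is correct and follows the same route as the paper, whose own argument is a one-line appeal to the functoriality of the deformation to the normal bundle and the functoriality of Thom classes; you have simply spelled out the two commuting squares (the Thom-class square via conditions (1)--(2) of the orientation and naturality of the cup-product, and the deformation square via the base-change map $D(X',Y')\to D(X,Y)$) that this functoriality provides. No gap.
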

\begin{proof}
It follows from the functoriality of the deformation to the normal bundle and the functoriality of Thom classes.
\end{proof}

The next proposition is an analogue of \cite[Proposition~7.6]{PW1}.
\begin{proposition}
\label{prop_section}
Let $\Tc$ be a special linear bundle over a smooth variety $X$ with a section $s\colon X\to \Tc$ meeting the zero section $r$ transversally in $Y$. Then for the inclusion $i\colon Y\to X$ and every $b\in A^{*,*}(X)$ we have
$$
i_Ai^A(b)=b\cup e(\Tc).
$$
\end{proposition}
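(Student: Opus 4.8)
The plan is to reduce the identity to the case $b=1$ and then to compute $i_A(1_Y)$ by comparing the pushforward along $i$ with the pushforward along the zero section $r\colon X\to\Tc$ via Lemma~\ref{lem_transv_push}. For the reduction I would use that every map occurring in $i_A=z^A\circ d^A_i\circ\big(-\cup th(N_i,\lambda)\big)$ is a homomorphism of $A^{*,*}(X)$-modules: $d^A_i$ is $A^{*,*}(X)$-linear by its construction, $-\cup th(N_i,\lambda)$ is $A^{*,*}(X)$-linear by associativity of the cup products of property~(5) with $A^{*,*}(Y)$ viewed as a module via $i^A$, and $z^A$ is a map in the localization sequence, hence $A^{*,*}(X)$-linear. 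Since $i^A$ is a ring homomorphism, $i^A(b)=b\cup 1_Y$, so
\[
i_Ai^A(b)=i_A(b\cup 1_Y)=b\cup i_A(1_Y),
\]
and it remains to prove $i_A(1_Y)=e(\Tc)$.

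Next I would fix the zero section $r\colon X\to\Tc$ and the quotient map $q\colon\Tc\to Th(\Tc)$, so that the natural map defining the Euler class is $q\circ r$ and $e(\Tc)=(q\circ r)^A th(\Tc)=r^Aq^A th(\Tc)$. A point of the fibre product $X\times_\Tc X$ formed with $s$ and $r$ is a pair $(x_1,x_2)$ with $s(x_1)=r(x_2)$, which forces $x_1=x_2$ and $s(x_1)=0$; thus $X\times_\Tc X$ is the zero scheme of $s$, which by transversality is $Y$, with both projections to $X$ equal to $i$. We therefore have a Cartesian square of smooth varieties
\[
\xymatrix{
Y\ar[rr]^{i}\ar[d]_{i} & & X\ar[d]^{s}\\
X\ar[rr]^{r} & & \Tc
}
\]
The normal bundle of the zero section $r$ is canonically $\Tc$ with its given special linear structure, transversality identifies $N_i$ with $i^*\Tc$, and the special linear structure on $N_i$ is taken to be the one induced from $\Tc$. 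Hence Lemma~\ref{lem_transv_push} applies to this square (with $s$ playing the part of $g$) and yields $\bar s^A\circ\tilde{r}_A=\tilde{\imath}_A\circ i^A$ as maps $A^{*,*}(X)\to A^{*,*}(Th(i))$, where $\tilde{r}_A$ is the composite of the Thom and deformation isomorphisms for $r$ and $\bar s\colon Th(i)\to Th(\Tc)$ is induced by $s$.

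It then remains to compute $\tilde{r}_A(1_X)$. For the zero section of a vector bundle the deformation to the normal bundle is trivial: a direct inspection identifies $D(X,\Tc)$ with $\Tc\times\A^1$ equipped with the closed embedding $r\times\mathrm{id}_{\A^1}$, whence $i_0^A=i_1^A$, the deformation isomorphism $d^A_r$ is the identity, and $\tilde{r}_A=-\cup th(\Tc)$; in particular $\tilde{r}_A(1_X)=th(\Tc)$. Applying $z^A$ for the natural $z\colon X\to Th(i)$ and using the identity above gives
\[
i_A(1_Y)=i_Ai^A(1_X)=z^A\bar s^A(th(\Tc))=(\bar s\circ z)^A(th(\Tc)).
\]
Now $z$ and $\bar s$ are the evident quotient maps compatible with $s$, so $\bar s\circ z=q\circ s$. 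Since the bundle projection $p\colon\Tc\to X$ satisfies $p\circ s=p\circ r=\mathrm{id}_X$ and $p^A$ is an isomorphism by homotopy invariance, $s^A=r^A=(p^A)^{-1}$ as maps $A^{*,*}(\Tc)\to A^{*,*}(X)$, and therefore
\[
i_A(1_Y)=s^Aq^A(th(\Tc))=r^Aq^A(th(\Tc))=e(\Tc),
\]
which together with the first step proves the proposition.

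I expect the main obstacle to be the bookkeeping around Lemma~\ref{lem_transv_push}: verifying that the square is Cartesian and that the special linear structures on the normal bundles match — the pushforward $i_A$ genuinely depends on the chosen trivialization of $\det N_i$ — together with the routine but not entirely formal verification that the deformation to the normal bundle for a zero section is trivial. Everything else is a formal manipulation of the $A^{*,*}(X)$-module structures and the functoriality of Thom classes.
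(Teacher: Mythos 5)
Your proof is correct and follows essentially the same route as the paper's: you apply Lemma~\ref{lem_transv_push} to the Cartesian square formed by $s$ and the zero section $r$, identify $\tilde{r}_A$ with $-\cup th(\Tc)$ via the triviality of the deformation space for a zero section, and use $s^A=r^A=(p^A)^{-1}$ to land on $e(\Tc)$. The only difference is cosmetic: you first reduce to $b=1$ using $A^{*,*}(X)$-linearity of $i_A$, whereas the paper carries $b$ through the same commutative diagram directly.
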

\begin{proof}
Let $z^A\colon A^{*,*}(Th(i))\to A^{*,*}(X)$ and $\overline{z}^A\colon A^{*,*}(Th(\Tc))\to A^{*,*}(\Tc)$ be the extension of supports maps and let $p \colon \Tc\to X$ be the structure map for the bundle. Consider the following diagram.
$$
\xymatrix{
A^{*,*}(X) \ar[rr]^{\tilde{r}_A}_{\cup th(\Tc)}\ar[d]^{i^A} & & A^{*,*}(Th(\Tc)) \ar[rr]^{\overline{z}^A}\ar[d]^{s^A} & & A^{*,*}(\Tc)\ar[d]^{s^A} \ar@<-0.5pc>[d]_{r^A}\\
A^{*,*}(Y) \ar[rr]^{\tilde{\imath}_A} & & A^{*,*}(Th(i)) \ar[rr]^{z^A} & & A^{*,*}(X) \ar@/_1.5pc/[u]_{p^A}
}
$$
The pullbacks along the two section of $p$ are inverses of the same isomorphism $p^A$, so $s^A=r^A$. The right-hand
square consists of pullbacks thus it is commutative.  The left-hand square commutes by Lemma~\ref{lem_transv_push}. Hence we have
\[
i_Ai^A(b)=z^A\tilde{\imath}_Ai^A(b)=r^A\overline{z}^A(b\cup th(\Tc))=b\cup e(\Tc). \qedhere
\]
\end{proof}

The pushforward maps are compatible with the compositions of the closed embeddings. The following proposition is similar to \cite[Proposition 5.1]{Ne2} and the same reasoning works out, so we omit the proof.
\begin{proposition}
\label{prop_composition}
Let $Z\xrightarrow{i}Y\xrightarrow{j}X$ be closed embeddings of smooth varieties with special linear normal bundles $(N_{ji},\lambda_{ji}),(N_i,\lambda_i),(i^*N_{ji}/N_i,\lambda_j)$ such that $\lambda_i\otimes\lambda_j=\lambda_{ji}$. Then
$$
j_Ai_A=(ji)_A.
$$
\end{proposition}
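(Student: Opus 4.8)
The plan is to reduce, by applying the deformation to the normal bundle twice (equivalently, via a double deformation space over $\A^1\times\A^1$), to the case in which all three embeddings are zero sections of vector bundles, where the identity $j_Ai_A=(ji)_A$ is an immediate consequence of the multiplicativity of Thom classes. Throughout, the argument relies on the functoriality of the deformation space $D(-,-)$ and of the isomorphism $d^A$, on the base change property of Lemma~\ref{lem_transv_push}, and on the canonical identification $\det N_i\otimes\det(N_{ji}/N_i)\cong\det N_{ji}$ of Lemma~\ref{quottriv}, under which the hypothesis $\lambda_i\otimes\lambda_j=\lambda_{ji}$ is stated; I write $(N_j,\lambda_j)$ for the special linear normal bundle of $j$, so that $i^*N_j\cong N_{ji}/N_i$.

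First I would deform $j\colon Y\to X$ using $D(Y,X)$, whose fibre over $1\in\A^1$ is $X$ and whose fibre over $0$ is the total space of $N_j$; it carries the family $Y\times\A^1\hookrightarrow D(Y,X)$ restricting to $j$ over $1$ and to the zero section $Y\to N_j$ over $0$, hence also the family $Z\times\A^1\hookrightarrow D(Y,X)$ obtained by restriction along $Z\hookrightarrow Y$. The normal bundle of this last family restricts to $N_{ji}$ over $1$ and, since over $0$ one has $Z\subset Y\subset N_j$ with $Y$ a zero section, to $N_i\oplus i^*N_j$ over $0$; by Lemma~\ref{quottriv} and the hypothesis its special linear structure restricts to $\lambda_{ji}$ over $1$ and to $\lambda_i\otimes i^*\lambda_j$ over $0$. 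Comparing the two special fibres on the level of Thom spaces — where $D(Y,X)$ supplies the requisite $H_\bullet(k)$-isomorphisms — using Lemma~\ref{lem_transv_push} and the functoriality of $d^A$, one reduces $j_Ai_A=(ji)_A$ over $X$ to the same identity over the total space of $N_j$; so one may assume $j$ is the zero section of a vector bundle over $Y$.

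Next I would deform $i\colon Z\to Y$: pulling back $N_j\to Y$ along the projection $D(Z,Y)\to Y$ gives a vector bundle $\mathcal{E}\to D(Z,Y)$ that restricts to $N_j\to Y$ over $1$ and, over $0$, to the pullback of $i^*N_j$ to the total space of $N_i$ (a bundle whose own total space is $N_i\oplus i^*N_j$ over $Z$); under this, the zero section $Y\subset N_j$ deforms to the inclusion of the subbundle $N_i\subset N_i\oplus i^*N_j$. Arguing exactly as above reduces the claim to the split situation in which $i$ is the zero section $Z\to N_i$ and $j$ is the subbundle inclusion $N_i\hookrightarrow N_i\oplus i^*N_j$, with the normal bundle of $j$ the pullback of $(i^*N_j,i^*\lambda_j)$ and the normal bundle of $ji$ equal to $(N_i\oplus i^*N_j,\lambda_i\otimes i^*\lambda_j)$. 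Here every embedding is a zero section, so all the isomorphisms $d^A$ are identities and each pushforward is simply cup product with the Thom class followed by extension of supports; thus $j_Ai_A(b)$ is obtained from $b\in A^{*,*}(Z)$ by cupping in turn with $th(N_i,\lambda_i)$ and with the pullback of $th(i^*N_j,i^*\lambda_j)$, whereas $(ji)_A(b)$ uses the single class $th(N_i\oplus i^*N_j,\lambda_i\otimes i^*\lambda_j)$. By axiom (4) of the special linear orientation the latter is the external product of the former two, so the two expressions agree by the projection formula, i.e.\ the compatibility of the cup product on Thom spaces with extension of supports.

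I expect the main obstacle to be not the split case but the bookkeeping in the two reduction steps: constructing the deformation families and the bundle $\mathcal{E}$ precisely, checking that the maps of Thom spaces used to compare the special fibres are weak equivalences in $H_\bullet(k)$, and — most delicately — verifying that the special linear trivialisations induced on the special fibres are exactly $\lambda_{ji}$ and $\lambda_i\otimes i^*\lambda_j$, which is precisely where the hypothesis $\lambda_i\otimes\lambda_j=\lambda_{ji}$ enters. All of this is worked out for Witt groups in \cite[Proposition~5.1]{Ne2}, and the same argument applies verbatim here.
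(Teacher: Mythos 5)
Your proposal is correct and follows essentially the same route as the paper: the paper gives no argument of its own for Proposition~\ref{prop_composition}, stating only that the reasoning of \cite[Proposition~5.1]{Ne2} carries over to the special linear setting, and that reasoning is precisely the double-deformation reduction to zero sections of vector bundles (where multiplicativity of Thom classes finishes the job) that you sketch and also cite.
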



\section{Preliminary computations in the stable cohomotopy groups and the stable Hopf map.}

We are going to do preliminary computations involving $\pi^{*,*}$ and various motivic spheres. The main result of this section is Proposition~\ref{prop_stable_coh} proved by a rather lengthy computation. We track down all the involved canonical isomorphisms, so the formulas are a bit messy. 

Throughout this section we use $X=\A^{n+1}-\{0\}$ for a punctured affine space with $n\ge 1$. Let $x=(1,1,0,\dots,0)$ be a point on $X$. First of all recall the following well-known isomorphisms \cite[Lemma 2.15, Example 2.20]{MV}.


\begin{definition}
Set $\sigma=\sigma_2^{-1}\sigma_1\colon (X,x)\xrightarrow{\simeq} (\Gm,1)\wedge T^{\wedge n} $ for the canonical isomorphism in the homotopy category. It is defined via
\[
(X,x)\xrightarrow{\sigma_1} X/((\A^1\times (\A^{n}-\{0\}))\cup (\{1\}\times \A^n)) \xleftarrow{\sigma_2} (\Gm,1)\wedge T^{\wedge n},
\]
where $\sigma_1$ is induced by the identity map on $X$ and $\sigma_2$ is induced by the natural embedding $\Gm\times \A^n\subset X$. Recall that $\sigma_1$ is an isomorphism since $(\A^1\times (\A^{n}-\{0\}))\cup (\{1\}\times \A^n)$ is $\A^1$-contractible, while $\sigma_2$ is induced by the excision isomorphism $(\Gm_+,+)\wedge T^{\wedge n} \cong X/(X - (\A^1\times \{(0,0,\ldots,0)\})),$ so it is an isomorphism as well.

We write $s=s_2^{-1}s_1\colon (\A^{2}-\{0\},(1,1))\xrightarrow{\simeq} (\Gm,1)\wedge T$ for this isomorphism in the particular case of $n=1$.
\end{definition}

Another isomorphism that we need could be easily expressed via the cone construction.

\begin{definition}
Let $i\colon Y\to Z$ be a morphism of pointed motivic spaces. The space $Cone(i)$ defined via the cocartesian square
\[
\xymatrix{
Y \ar[r]^{i}\ar[d]^{in_1} & Z \ar[d] \\
Y\wedge \Delta^1 \ar[r] & Cone(i)
}
\]
is called the \textit{cone of the morphism $i$}.
\end{definition}

\begin{definition}
\label{def_rho}
Set $\rho=\rho_2\circ\rho_1^{-1}\colon T\xrightarrow{\simeq} (\Gm,1)\wedge S^1_s=S^{2,1}$ for the canonical isomorphism in the homotopy category defined via
\[
T\xleftarrow{\rho_1} Cone(i_\rho) \xrightarrow{\rho_2} (\Gm,1)\wedge S^1_s
\]
where $i_\rho$ stands for the natural embedding $(\Gm,1)\to (\A^1,1)$ and the isomorphisms $\rho_1$ and $\rho_2$ are induced by the maps $\Delta^1\to pt$ and $\A^1\to pt$ respectively.
\end{definition}

\begin{definition}
For every pointed motivic space $Y$ put 
\[
\Sigma_T=(id_{Y}\wedge\rho)^{\pi}\Sigma^{2,1}\colon \pi^{*,*}(Y)\to \pi^{*+2,*+1}(Y\wedge T)
\]
and set $\Sigma_T^{n}=\Sigma_T \circ \Sigma_T \circ \ldots \circ \Sigma_T$ for the $n$-fold composition.
\end{definition}

Consider the localization sequence for the embedding $\{0\}\to \A^{n+1}$,
$$
\to\pi^{2n,n}(T^{\wedge n+1}) \to \pi^{2n,n}(\A^{n+1}) \to \pi^{2n,n}(X)\xrightarrow{\partial} \pi^{2n+1,n}(T^{\wedge n+1})\to
$$
Canonical isomorphisms described above together with the choice of the point $x$ on $X$ provide a splitting for the connecting homomorphism $\partial$. We discuss it in the next lemma. Put 
\[
\tau\colon T^{\wedge n}\wedge (\Gm,1)\to  (\Gm,1)\wedge T^{\wedge n},\quad \tau_c\colon  T^{\wedge n}\wedge T\to T\wedge T^{\wedge n} 
\]
 for the twisting isomorphisms defined via $(x_0,x_1,\ldots,x_n)\mapsto (x_n,x_0,\ldots,x_{n-1})$.

\begin{lemma}
\label{lem_cohsplit}
For the canonical morphism $r\colon (X_+,+)\to (X,x)$ we have 
\[
\partial r^\pi=(\tau_c^\pi)^{-1}(id\wedge\rho)^\pi\Sigma^{1,0}\tau^\pi(\sigma^\pi)^{-1}.
\]
\end{lemma}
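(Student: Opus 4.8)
The plan is to unwind both sides of the claimed identity to maps in the homotopy category $H_\bullet(k)$ and then match them morphism by morphism, since every map appearing in the statement is, by construction, the effect on $\pi^{*,*}(-)$ of an explicit morphism of pointed motivic spaces. The connecting homomorphism $\partial$ in the localization sequence for $\{0\}\hookrightarrow \A^{n+1}$ is, up to the canonical identification $Th(\{0\}\to\A^{n+1})\cong T^{\wedge n+1}$, the map induced by the cofiber boundary $X/(X-\{0\})\cdots$; more precisely it is represented by a map of spaces $(X,x)\to \Sigma_s^{1,0}\,\Sigma^\infty(\A^{n+1}-\{0\})$ followed by the collapse, and the point of the lemma is that precomposing with $r\colon (X_+,+)\to(X,x)$ and then inserting the canonical splitting data $\sigma$ (identifying $(X,x)$ with $(\Gm,1)\wedge T^{\wedge n}$) makes $\partial r^\pi$ computable.

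First I would replace $\partial$ by its description via the cofiber sequence $X_+\to (\A^{n+1})_+\to Th(\{0\}\to\A^{n+1})\to S^{1,0}\wedge X_+$ (this is the same long exact sequence referenced in property (1) of Section~2). The composite of the connecting map with $r^\pi$ amounts to the standard fact that the cofiber boundary $C(i)\to \Sigma_s Y$ for $i\colon Y\to Z$, when restricted appropriately, is the canonical collapse $Cone(i)\to Cone(i)/Z\cong \Sigma_s Y$; applied to $i\colon X_+\to \A^{n+1}_+$ and combined with the identification $\A^{n+1}_+/X_+ \simeq T^{\wedge n+1}$, this gives $\partial r^\pi$ as the $\pi^{*,*}$-image of an explicit map $(X,x)\to S^{1,0}\wedge X_+ \to S^{1,0}\wedge(X/\text{(affine part)})$. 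Next I would feed in $\sigma^{-1}\colon (\Gm,1)\wedge T^{\wedge n}\to (X,x)$: by the very definition of $\sigma$ (as $\sigma_2^{-1}\sigma_1$ with $\sigma_2$ the excision isomorphism for $\Gm\times\A^n\subset X$), the composite of the collapse with $\sigma^{-1}$ is literally the smash of $S^{1,0}$ with the inclusion $(\Gm,1)\wedge T^{\wedge n}\hookrightarrow$ the relevant quotient, i.e.\ a suspension $\Sigma^{1,0}$ of an identity-type map, which is where the factor $\Sigma^{1,0}\tau^\pi(\sigma^\pi)^{-1}$ comes from. The twist $\tau$ enters because $\sigma$ lands in $(\Gm,1)\wedge T^{\wedge n}$ whereas the localization identification of $Th(\{0\}\to\A^{n+1})$ naturally produces $T^{\wedge n}\wedge(\Gm,1)$ (the $\Gm$-direction splits off last), so one must commute the $\Gm$-factor past the $T^{\wedge n}$; similarly $\tau_c$ reconciles $T^{\wedge n}\wedge T$ with $T\wedge T^{\wedge n}=T^{\wedge n+1}$ and $\rho$ converts $T$ into $(\Gm,1)\wedge S^1_s$ to match the grading $\pi^{2n+1,n}(T^{\wedge n+1})$ on the target of $\partial$.

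The main obstacle is bookkeeping: there is no single hard idea, but one must be scrupulous about \emph{which} canonical isomorphism identifies $Th(\{0\}\to\A^{n+1})$ with $T^{\wedge n+1}$, in \emph{which} order the $n+1$ factors of $T$ appear, and how $\partial$ in the localization sequence is normalized relative to $\Sigma^{1,0}$ versus $\Sigma^{2,1}$. The definition of $\rho$ (Definition~\ref{def_rho}) and of $\Sigma_T$ are set up precisely so that $(id\wedge\rho)^\pi\Sigma^{2,1}$ is the ``correct'' $T$-suspension; here only one $\Sigma_T$-worth of correction survives (since the other $n$ factors of $T$ already sit inside $\sigma$'s target), which accounts for the single $(id\wedge\rho)^\pi\Sigma^{1,0}$ rather than $(id\wedge\rho)^\pi\Sigma^{2,1}$. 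I would therefore organize the proof as a commuting diagram in $H_\bullet(k)$ with the spaces $(X_+,+)$, $(X,x)$, $(\Gm,1)\wedge T^{\wedge n}$, $T^{\wedge n}\wedge(\Gm,1)$, $T^{\wedge n}\wedge T$, $T\wedge T^{\wedge n}$ as vertices, check commutativity of each small square by the defining cocartesian/excision properties of $\sigma_1,\sigma_2,\rho_1,\rho_2$ and of the cofiber sequence, and then apply $\pi^{*,*}(-)$ to read off the displayed formula. The transversality/excision inputs are exactly \cite[Lemma~2.15, Example~2.20, Theorem~2.23]{MV} already cited above, so no new geometric input is needed.
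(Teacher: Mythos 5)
Your plan is essentially the paper's own proof: the paper likewise unwinds $\partial$ via the cone construction for $X_+\to\A^{n+1}_+$, assembles a commutative diagram in $H_\bullet(k)$ whose vertices are exactly the spaces you list (together with the quotients by $Y=(\A^1\times(\A^{n}-\{0\}))\cup(\{1\}\times\A^n)$ and the relevant cones), checks each square using the excision/contractibility properties defining $\sigma_1,\sigma_2,\rho_1,\rho_2$, and then applies $\pi^{*,*}$. The only caveat is a small directional slip in your write-up --- the space-level composite whose pullback computes $\partial r^\pi$ runs from $T^{\wedge n+1}\simeq Cone(i_+)$ to $S^1_s\wedge(X,x)$, not out of $(X,x)$ --- but this does not affect the argument.
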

\begin{proof}
On the right-hand side we have 
\begin{multline*}
(\tau_c^\pi)^{-1}(id\wedge\rho)^\pi\Sigma^{1,0}\tau^\pi(\sigma^\pi)^{-1}=(\tau_c^\pi)^{-1}(id\wedge\rho)^\pi(\sigma^{-1}\tau\wedge id)^\pi\Sigma^{1,0}=\\
=((\sigma^{-1}\tau\wedge id)(id\wedge \rho)\tau_c^{-1})^\pi\Sigma^{1,0}.
\end{multline*}
Put $Y=(\A^1\times (\A^{n}-\{0\}))\cup (\{1\}\times \A^n)$. Let $i_Y\colon X/Y\to \A^{n+1}/Y$, $i_G\colon T^{\wedge n}\wedge (\Gm,1) \to T^{\wedge n}\wedge (\A^1,1)$ and $i_+\colon (X_+,+)\to (\A^{n+1}_+,+)$ be the natural embeddings and let $j_1\colon (\A^{n+1}_+,+)\to Cone(i_+)$ and $j_2\colon Cone(i_+)\to Cone(j_1)$ be the canonical maps for the cone construction. 

Consider the following diagram.
\[
\xymatrix{
 T^{\wedge n}\wedge (\Gm,1)\wedge S^1_s  \ar[d]_{\tau\wedge id}^{\simeq} & T^{\wedge n}\wedge Cone(i_\rho) \ar[l]_(0.45){id\wedge \rho_2}^(0.45){\simeq} \ar[r]^(0.6){id\wedge \rho_1}_(0.6){\simeq} \ar[d]_{w}^{\simeq} & T^{\wedge (n+1)} \ar[r]^{\tau_c}_{\simeq}  & T^{\wedge (n+1)}   \\
 (\Gm,1)\wedge T^{\wedge n}\wedge S^1_s  \ar[d]_{\sigma_2\wedge id}^{\simeq}  & Cone(i_G) \ar[d]_{t}^{\simeq} & &\\
(X/Y)\wedge S^1_s & Cone(i_Y) \ar[uurr]^{\psi_3}_{\simeq} \ar[l]_{u}^{\simeq}&  & Cone(i_+) \ar[d]^{j_2} \ar[uu]^{\simeq}_{\psi_1} \ar[ll]_{v}^{\simeq} \\
(X,x)\wedge S^{1}_s \ar[u]^{\sigma_1\wedge id}_{\simeq} & &(X_+,+)\wedge S^1_s  \ar[ll]_{r\wedge id} &  Cone(j_1)  \ar[l]_(0.4){\psi_2}^(0.4){\simeq} 
}
\]
Here 
$\psi_1,\psi_2$ and $\psi_3$ are induced by $\Delta^1\to pt$, $v$ is induced by $\A^{n+1}\to \A^{n+1}/Y$ and $X_+\to X/Y$, $u$ is induced by $\A^{n+1}/Y\to pt$, $w$ is an obvious isomorphism $T^{\wedge n}\wedge Cone(i_\rho)\cong Cone(id\wedge i_\rho)$ and $t$ is induced by the commutative square
\[
\xymatrix{
T^{\wedge n}\wedge (\Gm, 1) \ar[r] \ar[d]^{i_G} & X/Y \ar[d]^{i_Y} \\
T^{\wedge n}\wedge (\A^1,1) \ar[r]^(0.57){\tau_c'} & \A^{n+1}/Y,
}
\]
where $\tau_c'(x_0,x_1,\ldots,x_n)=(x_n,x_0,\ldots,x_{n-1})$.

 One can easily verify that the large diagram is commutative. By the very definition we have 
\[
\partial r^\pi=(\psi_2j_2\psi_1^{-1})^\pi\Sigma^{1,0}r^\pi=((r\wedge id)\psi_2j_2\psi_1^{-1})^\pi\Sigma^{1,0},
\]
thus it is sufficient to show
\[
(r\wedge id)\psi_2j_2\psi_1^{-1}
=(\sigma^{-1}\tau\wedge id)(id\wedge \rho)\tau_c^{-1}
\]
and it follows from the commutativity of the above diagram.
%
\end{proof}

\begin{definition}
The \textit{Hopf map} is the morphism of pointed motivic spaces
$$
H\colon (\A^2-\{0\},(1,1)) \to (\Proj^1,[1:1])
$$
given by $H(x,y)=[x,y]$. Let $\vartheta=\vartheta_2^{-1}\vartheta_1$ be the composition 
\[
\vartheta\colon (\PP^1,[1:1])\xrightarrow{\vartheta_1}\PP^1/\A^1\xleftarrow{\vartheta_2} T,
\]
where $\vartheta_1$ is induced by the identity map on $\PP^1$ and $\vartheta_2$ is the excision isomorphism given by $\vartheta_2(x)=[x:1]$. Then the \textit{stable Hopf map} is the unique element $\eta \in \pi^{-1,-1}(pt)$ such that $s^{\pi}\Sigma_T\Sigma^{1,1}\eta=\Sigma^\infty_T (\rho \vartheta H)$, i.e. $\eta$ is the stabilization of $H$ moved to $\pi^{-1,-1}(pt)$ via the canonical isomorphisms.
\end{definition}

\begin{lemma}
\label{lem_tildeH}
Let $\widetilde{H}\colon (\A^2-\{0\},(1,1)) \to (\Proj^1,[1:1])$ be the morphism of pointed motivic spaces given by $\widetilde{H}(x,y)=[y:x]$ and let
$\widetilde{\eta}\in \pi^{-1,-1}(pt)$ be the unique element such that $s^{\pi}\Sigma_T\Sigma^{1,1}\widetilde{\eta}=\Sigma^\infty_T (\rho \vartheta\widetilde{H})$. Then $\widetilde{\eta}=\epsilon \cup\eta$.
\end{lemma}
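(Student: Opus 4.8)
The plan is to reduce the statement to a computation about the two Hopf maps $H(x,y)=[x:y]$ and $\widetilde H(x,y)=[y:x]$ and the automorphism of $\A^2-\{0\}$ that swaps the two coordinates. First I would observe that $\widetilde H = H\circ \tau_{12}$, where $\tau_{12}\colon (\A^2-\{0\},(1,1))\to (\A^2-\{0\},(1,1))$ is the coordinate swap $(x,y)\mapsto (y,x)$, which is a pointed automorphism since it fixes $(1,1)$. Hence on the level of stabilizations $\Sigma^\infty(\rho u^{-1}\widetilde H)=\Sigma^\infty(\rho u^{-1}H)\circ \Sigma^\infty\tau_{12}$, and by definition of $\eta$ and $\widetilde\eta$ this translates, after transporting along the fixed canonical isomorphisms $s$, $\Sigma_T$, $\Sigma^{1,1}$, into a relation of the form $\widetilde\eta = (\text{sign of }\tau_{12})\cup \eta$ in $\pi^{-1,-1}(pt)$. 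So the whole content is to identify the effect of the coordinate-swap automorphism of the motivic sphere $\A^2-\{0\}$ on stable cohomotopy with multiplication by $\epsilon$.

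Next I would make that identification precise using the canonical isomorphism $s\colon (\A^2-\{0\},(1,1))\xrightarrow{\simeq}(\Gm,1)\wedge T$ from the preceding Definition. Under $s$, the automorphism $\tau_{12}$ is carried to some pointed self-map of $(\Gm,1)\wedge T$, and the claim becomes that this self-map is stably the smash of the identity on $\Gm$ with the map $T\to T$, $x\mapsto -x$ (or some equivalent combination giving exactly one factor of $\epsilon$). Concretely, $\tau_{12}$ is the restriction to $\A^2-\{0\}$ of the $\SL_2$-element is not available—$\tau_{12}$ has determinant $-1$—but we can factor the linear map $\begin{pmatrix}0&1\\1&0\end{pmatrix}$ as $\begin{pmatrix}0&-1\\1&0\end{pmatrix}\begin{pmatrix}-1&0\\0&1\end{pmatrix}$. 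The first factor lies in $\SL_2$, hence is $\A^1$-homotopic through $\SL_2$-matrices to the identity (since $\SL_2$ is $\A^1$-connected, being generated by elementary matrices), so it acts trivially on $\pi^{*,*}$. The second factor, $(x,y)\mapsto (-x,y)$, corresponds under $s$ to $(\Gm,1)\wedge T \to (\Gm,1)\wedge T$ which is $\mathrm{id}_{\Gm}$ smashed with the "negation on one coordinate" map of $T=\A^1/(\A^1-\{0\})$; by Property (7) and the Definition of $\epsilon$ in the Preliminaries, precomposition with this map is multiplication by $\epsilon$ on $\pi^{*,*}$. Chasing this through $s^\pi$ and the suspension isomorphisms, and using that $\epsilon$ is central and $\epsilon^2=1$, yields $\widetilde\eta=\epsilon\cup\eta$.

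Alternatively, and perhaps more cleanly, I would run the same argument entirely on the $\Proj^1$ side: the swap $\tau_{12}$ intertwines $H$ with $\widetilde H$, and on $\Proj^1$ it induces the coordinate-swap automorphism $[a:b]\mapsto[b:a]$; composing with $u^{-1}$ and $\rho$ identifies the discrepancy between the two stabilized maps with the self-map of $T\cong S^{2,1}$ induced by $x\mapsto -x$ near the basepoint (the swap fixes $[1:1]$ and acts on the affine chart $\A^1\ni t\mapsto 1/t$, whose derivative at the fixed point $t=1$ is $-1$, which is exactly what contributes the factor $\epsilon$ after passing to the Thom/quotient model). Either way, the key step—and the only real obstacle—is the careful bookkeeping of the canonical isomorphisms $s,\rho,u,\Sigma_T$ so that the single sign coming from the determinant $-1$ (equivalently, from the derivative $-1$ of the chart-transition at the fixed point) lands as exactly one factor of $\epsilon$ and not, say, $\epsilon^2=1$; this is precisely the kind of "messy but routine" tracking the authors flagged at the start of this section, and it should follow by concatenating the relevant commutative diagrams, in the spirit of the proof of Lemma~\ref{lem_cohsplit}.
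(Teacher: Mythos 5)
Your guiding idea---that the swap $\phi(x,y)=(y,x)$ has determinant $-1$, that an $\SL_2$-factor should act trivially, and that the leftover diagonal reflection accounts for exactly one factor of $\epsilon$---is the same mechanism that drives the paper's proof. But as written your reduction has a genuine gap: neither factor in your decomposition is a pointed self-map of $(\A^2-\{0\},(1,1))$ (the rotation $\bigl(\begin{smallmatrix}0&-1\\1&0\end{smallmatrix}\bigr)$ sends $(1,1)$ to $(-1,1)$, the reflection sends it to $(1,-1)$), and the $\A^1$-homotopies through elementary matrices that you invoke to trivialize the $\SL_2$-factor move the basepoint as well. So the key assertion ``the $\SL_2$-factor acts trivially on $\pi^{*,*}$'' is only established up to \emph{free} $\A^1$-homotopy, whereas $\eta$ and $\widetilde\eta$ are defined by pointed (stable) maps out of $(\A^2-\{0\},(1,1))$; the passage from free to pointed homotopy here is exactly the nontrivial bookkeeping, not a formality. (There is also a computational slip: $\bigl(\begin{smallmatrix}0&-1\\1&0\end{smallmatrix}\bigr)\bigl(\begin{smallmatrix}-1&0\\0&1\end{smallmatrix}\bigr)=\bigl(\begin{smallmatrix}0&-1\\-1&0\end{smallmatrix}\bigr)$ is \emph{minus} the swap; you want $\bigl(\begin{smallmatrix}0&-1\\1&0\end{smallmatrix}\bigr)\bigl(\begin{smallmatrix}1&0\\0&-1\end{smallmatrix}\bigr)$, and only with the sign on the second coordinate does the reflection sit on the $T$-factor of $s\colon(\A^2-\{0\},(1,1))\cong(\Gm,1)\wedge T$, since under $s$ the first coordinate is the $\Gm$-direction; otherwise ``by definition of $\epsilon$'' does not literally apply.)

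The paper's proof is engineered precisely to sidestep this basepoint problem rather than to solve it head-on: it smashes with one extra copy of $T$ and compares $\phi\wedge(-\mathrm{id}_T)$ with the identity. After the linear change of coordinates $\psi_2(x,y,z)=\left(\tfrac{x+y}{2},\tfrac{x-y}{2},z\right)$, which is well defined on $(\A^3-\{0\})/Y$ because the subspace being collapsed is respected, this map becomes $(x,y,z)\mapsto(x,-y,-z)$, i.e.\ $\mathrm{id}\wedge(-\mathrm{id}_{T\wedge T})$ by excision; and $-\mathrm{id}_{T\wedge T}=\mathrm{id}_{T\wedge T}$ holds as \emph{pointed} maps, since the elementary-matrix homotopies preserve $\A^2-\{0\}$ and hence descend to the quotient $T\wedge T=\A^2/(\A^2-\{0\})$. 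The single $\epsilon$ then appears when the extra $-\mathrm{id}_T$ is cancelled by desuspension. To salvage your route you would need to prove, with pointed homotopies, that a determinant~$-1$ matrix acts on the stable pointed sphere $(\A^2-\{0\},(1,1))$ as multiplication by $\epsilon$---and the natural way to do that is exactly the paper's maneuver (pass to a model like $\A^2/(\A^2-\{0\})$ or smash with an extra $T$, where linear homotopies are automatically basepoint-preserving). Your alternative sketch via the chart transition $t\mapsto 1/t$ on $\Proj^1$ is topological intuition and would face the same pointedness and sign-tracking issues.
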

\begin{proof}
Let $\phi\colon (\A^2-\{0\},(1,1))\to (\A^2-\{0\},(1,1))$ be the reflection given by $\phi(x,y)=(y,x)$. Put $Y=(\A^1\times (\A^2-\{0\}))\cup (\{1\}\times \A^2)$ and consider the following commutative diagram.
\[
\xymatrix{
(\A^2-\{0\},(1,1))\wedge T \ar[r]^{\phi\wedge -id_T} \ar[d]_{\psi_2}^{\simeq}& (\A^2-\{0\},(1,1))\wedge T \ar[d]_{\psi_2}^{\simeq}\\
(\A^3-\{0\})/Y \ar[r]^{\psi_3}_{\simeq} &  (\A^3-\{0\})/Y \\
(\Gm,1)\wedge T \wedge T \ar[r]^{id\wedge (-id_{T\wedge T})} \ar[u]^{\psi_1}_{\simeq} & (\Gm,1)\wedge T \wedge T \ar[u]^{\psi_1}_{\simeq}
}
\]
Here $\psi_1$ is induced by the inclusion $\Gm\times\A^2\to \A^3-\{0\}$, $\psi_2$ is given by $\psi_2(x,y,z)=(\frac{x+y}{2},\frac{x-y}{2},z)$ and $\psi_3(x,y,z)=(x,-y,-z)$. All the morphisms $\psi_i$ are isomorphisms: $\psi_1$ is an excision isomorphism, $\psi_3$ is an involution and $\psi_2$ could be decomposed in an obvious way
\[
(\A^2-\{0\},(1,1))\wedge T \xrightarrow{\psi_2'} (\A^2-\{0\})/((\A^1\times \Gm)\cup (\{1\}\times \A^1))\wedge T \to (\A^3-\{0\})/Y
\]
with the first map $\psi_2'(x,y,z)=(\frac{x+y}{2},\frac{x-y}{2},z)$ being an isomorphism since $((\A^1\times \Gm)\cup (\{1\}\times \A^1))$ is $\A^1$-contractible and the second map being an excision isomorphism. It is well-known that $-id_{T\wedge T}=id_{T\wedge T}$ in the homotopy category, so we obtain $\phi\wedge -id_T=\psi_2^{-1}\psi_1 (id\wedge id_{T\wedge T})\psi_1^{-1}\psi_2=id\wedge id_T$ yielding 
\[
(\rho \vartheta\widetilde{H})\wedge id_T=((\rho \vartheta\widetilde{H})\wedge id_T)(\phi\wedge -id_T)=(\rho \vartheta H)\wedge -id_T.
\]
Taking the $\Sigma^{\infty}_T$-suspension and using the suspension isomorphism $\Sigma_T^{-1}$ we get
\[
\Sigma^\infty_T (\rho \vartheta H)\cup\epsilon=\Sigma^\infty_T (\rho \vartheta\widetilde{H}).
\]
The suspension isomorphisms as well as $\rho^{\pi}$ and $s^{\pi}$ are homomorphism of $\pi^{0,0}(pt)$-modules, and $\epsilon$ is central, thus
\[
s^{\pi}\Sigma_T\Sigma^{1,1}(\epsilon\cup\eta)=\Sigma^\infty_T (\rho \vartheta H)\cup\epsilon=\Sigma^\infty_T (\rho \vartheta\widetilde{H})=s^{\pi}\Sigma_T\Sigma^{1,1}(\widetilde{\eta}).
\]
The claim follows via taking $(s^\pi)^{-1}$ and desuspending.
\end{proof}

Recall that for the stable cohomotopy groups we have canonical Thom classes for the trivialized vector bundles $th(\triv_X^n)=\Sigma^{n}_T1$ and pushforwards $i_\pi$ for the closed embeddings with a trivialized normal bundle $(N_i,\theta)$.

We fix the following notation. Let $i\colon \Gm \to X$ be a closed embedding to the zeroth coordinate given by $i(t)=(t,0,\dots,0)$. Identify the normal bundle
$$
N_i\cong U=\Gm\times \A^{n}\subset X
$$
with the Zariski neighbourhood $U$ of $\Gm$ and define the trivialization $\theta\colon U \xrightarrow{\simeq} \triv_\Gm^{n}$ via
$$
\theta(t,x_1,\dots,x_{n})=(t,x_1/t,x_2,\dots,x_{n}).
$$
There is a pushforward map
$$
i_\pi\colon \pi^{0,0}(\Gm)\to \pi^{2n,n}(X)
$$
induced by the trivialization $\theta$.

%

\begin{proposition}
\label{prop_stable_coh}
In the above notation we have $\partial i_\pi (1)=(-1)^n \epsilon\cup \Sigma^{n+1}_T\eta$.
\end{proposition}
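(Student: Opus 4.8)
\emph{Step 1: reduction to Lemma~\ref{lem_cohsplit}.} The computation is governed by Lemma~\ref{lem_cohsplit}, and the stable Hopf map will enter through the non-constant unit $t^{-1}$ in the trivialization $\theta$. Since $x=(1,1,0,\dots,0)$ does not lie on $i(\Gm)$, the composite $pt\xrightarrow{x}X\xrightarrow{z}Th(i)$ factors through the basepoint of $Th(i)=X/(X-\Gm)$, so $x^\pi i_\pi(1)=0$. The point $x$ splits $\pi^{*,*}(X)\cong\pi^{*,*}(pt)\oplus\pi^{*,*}(X,x)$ with $\ker x^\pi=\operatorname{im}r^\pi$, and $\partial$ annihilates the first summand since it is the image of $\pi^{*,*}(\A^{n+1})$ under restriction. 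Hence $i_\pi(1)=r^\pi(\alpha)$ for a unique $\alpha\in\pi^{2n,n}(X,x)$, and
\[
\partial i_\pi(1)=\partial r^\pi(\alpha)=(\tau_c^\pi)^{-1}(id\wedge\rho)^\pi\Sigma^{1,0}\tau^\pi(\sigma^\pi)^{-1}(\alpha);
\]
so everything reduces to computing $(\sigma^\pi)^{-1}(\alpha)\in\pi^{2n,n}((\Gm,1)\wedge T^{\wedge n})$.

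\emph{Step 2: unwinding the pushforward.} Writing $\bar z\colon(X,x)\to Th(i)$ for the reduced extension-of-supports map, one has $\alpha=\bar z^\pi d^\pi_i\big(th(N_i,\theta)\big)$. In the Zariski neighbourhood $U=\Gm\times\A^n$ the subvariety $\Gm$ is the zero section of the trivial bundle $\mathrm{pr}_1\colon U\to\Gm$; for the zero section of a vector bundle the deformation-to-the-normal-bundle isomorphism is the identity, and Nisnevich excision along $U\hookrightarrow X$ identifies $Th(i)$ with $U/(U-\Gm)=\Gm_+\wedge T^{\wedge n}$ compatibly with $d^\pi_i$. Under these identifications $th(N_i,\theta)$ is the pullback of $th(\triv^n_\Gm)=\Sigma^n_T 1$ along the self-map $\bar\theta$ of $\Gm_+\wedge T^{\wedge n}$ that rescales the first $T$-coordinate by $t^{-1}$, and $\bar z$ becomes the canonical map $(X,x)\to\Gm_+\wedge T^{\wedge n}$ coming from $U\subset X$. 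Since $X-\Gm=\A^1\times(\A^n-0)$ is contained in the space $Y$ used to define $\sigma$, the morphism $\sigma_1$ factors through $\bar z$; combining this with the formula for $\sigma_2$ expresses $(\sigma^\pi)^{-1}(\alpha)$ as $\Sigma^n_T$ of the pullback of $1$ along an explicit self-map of $(\Gm,1)\wedge T$ built out of $\bar\theta$.

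\emph{Step 3: the Hopf map and the constants.} On the chart $\Gm\times\A^1\subset\A^2-\{0\}$ the nontrivial component of $\bar\theta$ is $(t,x_1)\mapsto x_1/t$, which is exactly $u^{-1}\widetilde H$ of Lemma~\ref{lem_tildeH}; hence, substituted into the formula of Step~1, it reproduces a $\Sigma^n_T$-suspension of $\Sigma^\infty(\rho u^{-1}\widetilde H)$, and Lemma~\ref{lem_tildeH} turns $\widetilde\eta$ into $\epsilon\eta$. There is no competing ``$1$''-term: the part of $\bar\theta$ coming from the standard trivialization would contribute $\partial$ of a class pulled back from $\A^{n+1}$, hence $0$ by localization exactness. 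Tracking the twist isomorphisms $\tau,\tau_c,\rho$ and the coordinate reorderings needed to turn $(\Gm,1)\wedge T^{\wedge n}\wedge S^{1,0}$ into $T^{\wedge n+1}$ then produces the sign $(-1)^n$, whence $\partial i_\pi(1)=(-1)^n\epsilon\,\Sigma^{n+1}_T\eta$.

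\emph{Main obstacle.} The conceptual content is light — the $t^{-1}$ in $\theta$ is morally the (swapped) Hopf map, everything else being homotopy-theoretic bookkeeping — but faithfully tracking the whole chain of canonical isomorphisms (excision, deformation, $\sigma_1,\sigma_2,\rho,\tau,\tau_c,u$) together with the signs $\epsilon,(-1)^{\bullet}$ they contribute is precisely the ``rather lengthy computation'' the authors refer to, and is where essentially all of the work lies.
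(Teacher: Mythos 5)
Your proposal follows essentially the same route as the paper's proof: factoring the extension of supports through $(X,x)$ and applying Lemma~\ref{lem_cohsplit}, identifying the deformation isomorphism with Nisnevich excision via the zero-section observation, recognizing the $t^{-1}$-rescaled Thom collapse as a $T$-suspension of $u^{-1}\widetilde{H}$ (using that $\sigma_1$ factors through $Th(i)$ because $X-\Gm\subset Y$) and invoking Lemma~\ref{lem_tildeH} to produce $\epsilon\cup\eta$, with the sign attributed to the sphere reorderings. The one piece you assert rather than derive is that final sign, which the paper pins down by observing that the required shuffle is a cyclic permutation of the $n+1$ simplicial circles and hence acts by $(-1)^n$ in the homotopy category.
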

\begin{proof}
From the construction of the pushforward map we have
$$
i_\pi(1)=z^\pi d_i^\pi (th(U,\theta))
$$
with $z^\pi\colon \pi^{*,*}(Th(i))\to \pi^{*,*}(X)$ being a support extension and $d_i^\pi$ a deformation to the normal bundle isomorphism. Represent $i$ as a composition
\[
i\colon \Gm\xrightarrow{i_1} U\xrightarrow{i_2} X
\]
and let $w\colon Th(i_1)\xrightarrow{\simeq} Th(i)$ be the induced isomorphism in the homotopy category. Recall that for the total space of the vector bundle $U$ there is a natural isomorphism \cite[proof of Proposition~3.1]{Ne2} $D(\Gm,U)\cong U\times \A^1$ and $d_{i_1}^\pi=id$. By the functoriality of the deformation construction we have $d_i^\pi=(w^\pi)^{-1}$, so we need to compute
$$
\partial z^\pi (w^{\pi})^{-1}(th(U,\theta)).
$$
Decomposing $z$ in
\[
z\colon (X_+,+)\xrightarrow{r} (X,x)\xrightarrow{z_1} Th(i)
\]
and using Lemma~\ref{lem_cohsplit} we obtain
\begin{multline*}
\partial z^\pi (w^{\pi})^{-1}(th(U,\theta))=\partial r^\pi z_1^\pi (w^{\pi})^{-1}(th(U,\theta))=\\
=(\tau_c^\pi)^{-1}(id\wedge\rho)^\pi\Sigma^{1,0}(\tau^\pi(\sigma^\pi)^{-1}z_1^\pi (w^{\pi})^{-1}(th(U,\theta))).
\end{multline*}
We can represent the Thom class $th(U,\theta)\in \pi^{2n,n}(Th(i_1))$ by $\Sigma_T^\infty$-suspension of the composition 
\[
Th(i_1)\xrightarrow{\widetilde{H}_2} T^{\wedge n}\xrightarrow{\rho^{\wedge n}} (S^{2,1})^{\wedge n}\xrightarrow{\Xi_n} S^{2n,n},
\]
where $\widetilde{H}_2$ is given by $\widetilde{H}_2(t,x_1,x_2,\dots,x_{n})=(x_1/t,x_2,\dots,x_{n})$, and $\Xi_n$ is the canonical shuffling isomorphism.

Identifying the first copy of $T$ with $\PP^1/\A^1$ via $\theta_2(x)= [x:1]$ we rewrite $\widetilde{H}_2$ as $\widetilde{H}_2=(\theta_2^{-1}\wedge id)\widetilde{H}_1$ with $\widetilde{H}_1$ given by
\[
\widetilde{H}_1(t,x_1,x_2,\dots,x_{n})=([x_1:t],x_2,\dots,x_{n}).
\]
Put $Y=(\A^1\times (\A^{n}-\{0\}))\cup (\{1\}\times \A^n)$ and consider the following diagram.
\[
\xymatrix{
(\PP^1/\A^1)\wedge T^{\wedge n-1} & T\wedge T^{\wedge n-1} \ar[l]_(0.4){\vartheta_2\wedge id}^(0.4){\simeq} \ar[r]^{\Xi_n\rho^{\wedge n}}_{\simeq} & S^{2n,n} \\
Th(i_1)\ar[u]^{\widetilde{H}_1} \ar[r]^(0.4){w_1}_(0.4){\simeq} & ((\A^2-\{0\})/ \A^1\times\Gm)\wedge T^{\wedge n-1} \ar[lu]_{\widetilde{H}_3\wedge id} \ar[r]^(0.76){w_2}_(0.76){\simeq} & Th(i) \ar[ld]_{j'}\\
(\A^2-\{0\},(1,1))\wedge T^{\wedge n-1}\ar[r]^(0.55){\psi_1}_(0.55){\simeq} \ar[ur]^{j\wedge id} & X/Y  & (X,x)\ar[u]^{z_1} \ar[l]_(0.35){\sigma_1}^(0.35){\simeq}
}
\]
Here $\widetilde{H}_3(x,y)=[y:x]$ and all the other maps are given by the tautological inclusions, i.e. $w_1$ is induced by the inclusion
$U=\Gm\times \A^n\subset (\A^2-\{0\})\times \A^{n-1}$,
$w_2$ and $\psi_1$ are induced by $(\A^2-\{0\})\times \A^{n-1}\subset X$,
$j'$ is given by the identity map on $X$ and $j$ is given by identity map on $\A^2-\{0\}$. Morphisms $w_1$ and $w_2$ are excision isomorphisms and $\psi_1$ is a composition of isomorphism $s_1\wedge id$ and excision isomorphism (see the next diagram), so it is an isomorphism as well. One can easily check that this diagram is commutative. Hence
\begin{multline*}
z_1^\pi (w^{\pi})^{-1}(th(U,\theta))=z_1^\pi ((w_2w_1)^{\pi})^{-1}(th(U,\theta))=\\
=\Sigma^{\infty}_T(\Xi_n \rho^{\wedge n}(\vartheta_2^{-1}\widetilde{H}_3j\wedge id)\psi_1^{-1}\sigma_1)
=(\psi_1^{-1}\sigma_1)^{\pi}\Sigma_T^{n-1}(\Sigma^{\infty}_T(\rho \vartheta\widetilde{H})),
\end{multline*}
with $\widetilde{H}=\widetilde{H}_3j$. There is the following commutative diagram consisting of isomorphisms.
\[
\xymatrix{
(\A^2-\{0\},(1,1))\wedge T^{\wedge n-1}\ar[r]^(0.55){\psi_1}_(0.55){\simeq}  \ar[d]_{s_1\wedge id}^{\simeq}& X/Y \\
(\A^2-\{0\})/((\A^1\times \Gm)\cup (\{1\}\times \A^1))\wedge T^{\wedge n-1}  \ar[ur]_{\simeq} & (\Gm,1)\wedge T\wedge T^{\wedge n-1} \ar[u]^{\sigma_2}_{\simeq} \ar[l]_(0.32){s_2\wedge id}^(0.32){\simeq}
}
\]
All the maps in the diagram are induced by the tautological inclusions, $s_2$ is induced by $\Gm\times \A^1\subset \A^2-\{0\}$ and $s_1$ is given by the identity map on $\A^2-\{0\}$. Morphisms $\sigma_2$, $s_2$ and the diagonal morphism are excision isomorphisms and $s_1$ is an isomorphism via the usual contraction argument.

Thus we  have 
\begin{multline*}
\Sigma^{1,0}((\psi_1^{-1}\sigma_1\sigma^{-1}\tau)^{\pi}\Sigma_T^{n-1}(\Sigma^{\infty}_T(\rho \vartheta\widetilde{H})))=\\
=\Sigma^{1,0}(((s_1^{-1}s_2\wedge id)\tau)^{\pi}\Sigma_T^{n-1}(\Sigma^{\infty}_T(\rho \vartheta\widetilde{H})))=\\
=\Sigma^{1,0}(\tau^{\pi}\Sigma_T^{n-1}((s_1^{-1}s_2)^{\pi}\Sigma^{\infty}_T(\rho \vartheta\widetilde{H}))).
\end{multline*}
To sum up, the above considerations together with Lemma~\ref{lem_tildeH} yield
\begin{multline*}
\partial i_\pi (1)=((id\wedge\rho)\tau_c^{-1})^{\pi}\Sigma^{1,0}(\tau^{\pi}\Sigma_T^{n-1}((s_1^{-1}s_2)^{\pi}\Sigma^{\infty}_T(\rho \vartheta\widetilde{H})))=\\
=\epsilon \cup ((id\wedge\rho)\tau_c^{-1})^{\pi}\Sigma^{1,0}(\tau^{\pi}\Sigma_T^{n}\Sigma^{1,1}\eta)=
\epsilon \cup ((\tau\wedge id)(id\wedge\rho)\tau_c^{-1})^{\pi}\Sigma^{1,0}\Sigma_T^{n}\Sigma^{1,1}\eta.
\end{multline*}
Now we examine the homomorphism
\[
\Theta=((\tau\wedge id)(id\wedge\rho)\tau_c^{-1})^{\pi}\Sigma^{1,0}\Sigma_T^{n}\Sigma^{1,1}\colon \pi^{-1,-1}(pt) \to \pi^{2n+1,n}(T^{\wedge n+1}).
\]
Unraveling the notation, this homomorphism can be represented as an external product with a $\Sigma_T^\infty$-suspension of the map $T^{\wedge n+1}\to S^{2n+2,n+1}$ corresponding to the following picture consisting of $\rho$-s and identity maps: 

\begin{tikzpicture}
    \tikzstyle{ann} = [draw=none,fill=none,right]
    \matrix (m) [matrix of math nodes, row sep=25, column sep=-6]
    { 
    \Gm & \wedge & \Gm & \wedge &\Gm & \wedge & \ldots & \wedge & \Gm  & \wedge  & S_s^1 & \wedge & S_s^1 & \wedge & \ldots & \wedge & S_s^1 & \wedge & S_s^1\\
    & \Gm & \wedge & \Gm & \wedge & S_s^1 & \wedge & \Gm &  \wedge &  S_s^1    & \wedge  & \ldots & \wedge & \Gm & \wedge & S_s^1  & \wedge & S_s^1   \\
    & \Gm & \wedge &  & T &  & \wedge &  &  T &    & \wedge  & \ldots & \wedge &  & T &  & \wedge & S_s^1   \\
        &  & T &  & \wedge &  &  T &    & \wedge  & \ldots & \wedge &  & T &  & \wedge & \Gm & \wedge & S_s^1   \\
        &  & T &  & \wedge &  &  T &    & \wedge  & \ldots & \wedge &  & T &  & \wedge &  & T &  \\
       };
   \path[->] (m-2-2.115) edge (m-1-1.260);
   \path[->] (m-2-4.120) edge (m-1-3.290);       
   \path[->] (m-2-8.125) edge (m-1-5.320);          
   \path[->] (m-2-14.130) edge (m-1-9.320);          
   \path[->] (m-2-6.60) edge (m-1-11.230);
   \path[->] (m-2-10.80) edge (m-1-13);   
   \path[->] (m-2-16.north) edge (m-1-17);      
   \path[->] (m-2-18.north) edge (m-1-19.260);    
   
   \path[->] (m-3-2) edge (m-2-2);
   \path[->] (m-3-5) edge (m-2-5);       
   \path[->] (m-3-9) edge (m-2-9);          
   \path[->] (m-3-15) edge (m-2-15);   
   \path[->] (m-3-18) edge (m-2-18);      
   
   \path[->] (m-4-3) edge (m-3-5);
   \path[->] (m-4-7) edge (m-3-9);       
   \path[->] (m-4-13) edge (m-3-15);          
   \path[->] (m-4-16.140) edge (m-3-2.330);   
   \path[->] (m-4-18) edge (m-3-18);      
   
   \path[->] (m-5-3.40) edge (m-4-17.240);
   \path[->] (m-5-7) edge (m-4-3);       
   \path[->] (m-5-10) edge (m-4-7);          
   \path[->] (m-5-13) edge (m-4-10);   
   \path[->] (m-5-17) edge (m-4-13);

  \node[draw=black, thick, dotted, inner sep=0em, fit=(m-2-4) (m-2-6)] {};
  \node[draw=black, thick, dotted, inner sep=0em, fit=(m-2-8) (m-2-10)] {};
  \node[draw=black, thick, dotted, inner sep=0em, fit=(m-2-14) (m-2-16)] {};  
  \node[draw=black, thick, dotted, inner sep=0em, fit=(m-4-16) (m-4-18)] {};
      
\end{tikzpicture}

\noindent
Here the first row of morphisms is the canonical shuffling isomorphism, the first row combined with the second one correspond to $\Sigma^{1,0}\Sigma_T^{n}\Sigma^{1,1}$, the third one is $\tau\wedge id$ and the fourth row is $(id\wedge \rho)\tau_c^{-1}$. Taking the composition we obtain the next picture (we write $\Gm\wedge S_s^1$ instead of $T$):

\begin{tikzpicture}
    \tikzstyle{ann} = [draw=none,fill=none,right]
    \matrix (m) [matrix of math nodes, row sep=30, column sep=-4]
    { \Gm & \wedge & \Gm & \wedge &\Gm & \wedge & \ldots & \wedge & \Gm & \wedge  & S_s^1 & \wedge & S_s^1 & \wedge & \ldots & \wedge & S_s^1 & \wedge & S_s^1 \\
       & \Gm & \wedge & S_s^1 & \wedge & \Gm & \wedge & S_s^1 & \wedge & \Gm & \wedge & S_s^1 & \wedge & \ldots & \wedge & \Gm & \wedge & S_s^1 & \\ };
   \path[->] (m-2-2.115) edge (m-1-1.260);
   \path[->] (m-2-6.120) edge (m-1-3.290);       
   \path[->] (m-2-10.125) edge (m-1-5.320);          
   \path[->] (m-2-16.130) edge (m-1-9.320);          
   \path[->] (m-2-4.80) edge (m-1-19.230);
   \path[->] (m-2-8.north) edge (m-1-11);   
   \path[->] (m-2-12.north) edge (m-1-13);      
   \path[->] (m-2-18.north) edge (m-1-17.260);    

  \node[draw=black, thick, dotted, inner sep=0em, fit=(m-2-2) (m-2-4)] {};
  \node[draw=black, thick, dotted, inner sep=0em, fit=(m-2-6) (m-2-8)] {};
  \node[draw=black, thick, dotted, inner sep=0em, fit=(m-2-10) (m-2-12)] {};  
  \node[draw=black, thick, dotted, inner sep=0em, fit=(m-2-16) (m-2-18)] {};
      
\end{tikzpicture}

\noindent The corresponding picture for $\Sigma_T^{n+1}$ looks as follows:

\begin{tikzpicture}
    \tikzstyle{ann} = [draw=none,fill=none,right]
    \matrix (m) [matrix of math nodes, row sep=30, column sep=-4]
    { \Gm & \wedge & \Gm & \wedge  & \ldots & \wedge &\Gm & \wedge & \Gm & \wedge  & S_s^1 & \wedge & S_s^1 & \wedge & \ldots & \wedge & S_s^1 & \wedge & S_s^1 \\
       & \Gm & \wedge & S_s^1 & \wedge & \Gm & \wedge & S_s^1 & \wedge & \ldots & \wedge & \Gm & \wedge & S_s^1 &  \wedge &\Gm & \wedge & S_s^1 & \\ };
       
   \path[->] (m-2-2.115) edge (m-1-1.260);
   \path[->] (m-2-6.120) edge (m-1-3.290);       
   \path[->] (m-2-12.125) edge (m-1-7.320);          
   \path[->] (m-2-16.130) edge (m-1-9.320);          
   \path[->] (m-2-4.80) edge (m-1-11.230);
   \path[->] (m-2-8.north) edge (m-1-13);   
   \path[->] (m-2-14.80) edge (m-1-17);      
   \path[->] (m-2-18.north) edge (m-1-19.260);    

  \node[draw=black, thick, dotted, inner sep=0em, fit=(m-2-2) (m-2-4)] {};
  \node[draw=black, thick, dotted, inner sep=0em, fit=(m-2-6) (m-2-8)] {};
  \node[draw=black, thick, dotted, inner sep=0em, fit=(m-2-12) (m-2-14)] {};  
  \node[draw=black, thick, dotted, inner sep=0em, fit=(m-2-16) (m-2-18)] {};
      
\end{tikzpicture}

\noindent
These pictures coincide up to a cyclic permutation of $S_s^1$-s. This permutation automorphism equals to $(-1)^{n}$ in the homotopy category, thus $\Theta=(-1)^{n}\Sigma_T^{n+1}$.
\end{proof}

\section{Inverting the stable Hopf map.}

Let $A^{*,*}(-)$ be a bigraded ring cohomology theory represented by a commutative monoid $A\in \SH(k)$. Inverting $\eta \in A^{-1,-1}(\pt)$ we obtain a new cohomology theory with $(2i,i)$ groups isomorphic to $(2i+n,i+n)$ ones by means of the cup product with $\eta^{-n}$. Put
\begin{gather*}
A^{n}(Y)=\left(A^{*,*}_\eta (Y)\right)^{n,0}= \left(A^{*,*}(Y)\otimes_{A^{*,*}(pt)}A^{*,*}(pt)[\eta^{-1}]\right)^{n,0},\\
A^{*}(Y)=\left(A^{*,*}_\eta (Y)\right)^{*,0}=\bigoplus_{n\in\mathbb{Z}} A^{n}(Y).
\end{gather*}

One can easily see that it is a cohomology theory. For the algebraic $K$-theory represented by $BGL$ \cite{PPR2} this construction gives $BGL^*(-)=0$ since we have $\eta\in BGL^{-1,-1}(pt)=K_{-1}(pt)=0$ and $BGL_{\eta}^{*,*}(-)=0$. As we will see in Corollary~\ref{cor_oriented} it is always the case that an oriented cohomology theory degenerates to a trivial cohomology theory. Thus we are interested in cohomology theories with a special linear orientation but without a general one. Our running example is hermitian $K$-theory represented by the spectrum $BO$ that derives to the Witt groups, i.e. for every smooth variety $X$ there is a natural isomorphism $BO^i(X)\cong W^i(X)$ (see \cite{An}).

For the stable cohomotopy groups there is the following result by Morel.
\begin{theorem}
\label{Morel}
There exists a canonical isomorphism
$\left(\pi^{*,*}_\eta(pt)\right)^{0,0}\xrightarrow{\simeq} W^0(pt)$.
\end{theorem}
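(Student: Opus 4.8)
The plan is to deduce Theorem~\ref{Morel} from Morel's description of the zeroth graded line of the motivic sphere spectrum, followed by an elementary localization computation in Milnor--Witt $K$-theory. Since $1-\eta$ is not homogeneous, ``setting $\eta=1$'' means passing to the $\eta$-localization, so by definition $\overline{\pi}^0(pt)=\big(\pi^{*,*}_\eta(pt)\big)^{0,0}$; a class of bidegree $(0,0)$ in the localized ring is represented by a fraction $x/\eta^{\,n}$ with $x\in\pi^{-n,-n}(pt)$, and therefore
\[
\overline{\pi}^0(pt)\;\cong\;\operatorname*{colim}\big(\pi^{0,0}(pt)\xrightarrow{\cdot\eta}\pi^{-1,-1}(pt)\xrightarrow{\cdot\eta}\pi^{-2,-2}(pt)\xrightarrow{\cdot\eta}\cdots\big),
\]
i.e. $\overline{\pi}^0(pt)$ is the degree-zero component of the graded ring $\big(\bigoplus_{n\in\mathbb Z}\pi^{n,n}(pt)\big)[\eta^{-1}]$.

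Next I would invoke Morel's theorem \cite{Mor2}, which identifies the graded ring $\bigoplus_{n}\pi^{n,n}(\Spec k)$ of diagonal stable cohomotopy groups of the point with the Milnor--Witt $K$-theory ring $K^{MW}_*(k)$, the Hopf element $\eta$ of the present paper going to the canonical generator of $K^{MW}_{-1}(k)$. Matching the two Hopf elements requires keeping track of the canonical isomorphisms and of the sign conventions fixed in Sections~5--6 (in particular Lemma~\ref{lem_tildeH} and the element $\epsilon$), so that the colimit above is formed along the correct map; this bookkeeping, rather than any conceptual difficulty, is the one point demanding care. Granting it, the theorem reduces to the purely algebraic statement that the degree-zero component of $K^{MW}_*(k)[\eta^{-1}]$ is $W(k)$.

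Finally I would run the standard computation in $K^{MW}_*(k)$: the relation $\eta\,h=0$, where $h=1+\langle-1\rangle\in GW(k)=K^{MW}_0(k)$, shows that multiplication by $\eta$ annihilates the hyperbolic ideal, so $K^{MW}_0(k)[\eta^{-1}]$ is a quotient of $GW(k)/(h)=W(k)$; on the other hand $K^{MW}_{-n}(k)\cong W(k)$ for every $n\ge 1$ and the maps $\cdot\eta$ between consecutive terms of the colimit are isomorphisms. Hence the colimit is $W(k)=W^0(pt)$, and the composite $\pi^{0,0}(pt)=GW(k)\twoheadrightarrow W(k)=W^0(pt)$ --- the canonical ring surjection from the Grothendieck--Witt to the Witt ring --- descends through the localization to the desired canonical isomorphism. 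There is no substantial obstacle beyond the normalization bookkeeping mentioned above, the content of the statement being essentially a repackaging of Morel's computation.
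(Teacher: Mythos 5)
Your proposal is correct and is essentially the paper's argument: the paper's proof is nothing but the citation ``See \cite{Mor2}'', i.e.\ exactly Morel's identification of the diagonal stable cohomotopy groups with Milnor--Witt $K$-theory (quoted in the introduction in the equivalent form $\oplus_n\pi^{n,n}(\Spec k)[\eta^{-1}]\cong W^0(k)[\eta,\eta^{-1}]$), and your unwinding of $\overline{\pi}^0(pt)$ as the colimit along multiplication by $\eta$ together with the standard computation $K^{MW}_*(k)[\eta^{-1}]\cong W(k)[\eta,\eta^{-1}]$ is precisely the intended deduction. The only caveat, inherited equally by the paper, is that Morel's theorem as cited requires $k$ perfect, and the sign/normalization bookkeeping you flag (Lemma~\ref{lem_tildeH}, the element $\epsilon$) is indeed harmless since it only changes $\eta$ by a unit.
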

\begin{proof}
See \cite{Mor2}.
\end{proof}

\begin{definition}
From now on $A^*(-)$ denotes a graded ring cohomology theory obtained via the above construction, i.e.
\[
A^{*}(Y)=\left(A^{*,*}_\eta (Y)\right)^{*,0}
\]
for a bigraded $SL$-oriented ring cohomology theory $A^{*,*}(-)$ represented by a commutative monoid $A\in \SH(k)$. We have Thom and Euler classes and all the machinery of $SL$-oriented theories, including the Gysin sequences and pushforwards. In order to stay in the chosen grading we need to modify the Thom and Euler classes as follows:
\[
th'(\Tc)=(-1)^{\frac{n(n-1)}{2}}th(\Tc)\cup\eta^{n}, \quad e'(\Tc)=(-1)^{\frac{n(n-1)}{2}}e(\Tc)\cup\eta^{n}
\]
for the special linear bundle $\Tc$ of rank $n$. The sign is introduced for the sake of multiplicativity of the characteristic classes. Shortening the notation we are going to omit the primes and write just $th(\Tc)$ and $e(\Tc)$ and refer to them as Thom and Euler classes. These classes are of degree $n$.
\end{definition}

\begin{rem}
\label{rem_epsilon}
Note that from $\epsilon$-commutativity we have $\eta\cup\eta=-\epsilon\cup(\eta\cup\eta)$, thus inverting $\eta$ we obtain $\epsilon=-1$ in $A^{*}(pt)$.
\end{rem}

\begin{definition}
Let $E$ be a vector bundle over a smooth variety $X$. The \textit{hyperbolic bundle associated to $E$} is the symplectic bundle 
\[
H(E)=\left(E\oplus E^\vee, \left(\begin{array}{cc}0 & 1\\ -1 & 0\end{array}\right)\right).
\]
Denote by $p_i(E)=(-1)^ib_{2i}(H(E))$ the signed even Borel classes of $H(E)$ and refer to them as \textit{Pontryagin classes}. The \textit{total Pontryagin class} is $p_*(E)=\sum p_i(E)t^{2i}$.
\end{definition}

\begin{rem}
This definition is parallel to the definition of the Pontryagin classes in topology with the Borel classes substituted for the Chern ones and using hyperbolisation instead of complexification. 
\end{rem}

We defined Pontryagin classes for arbitrary vector bundles without any additional structure. We will show later that for a special linear bundle $\Tc$ the odd Borel classes $b_{2i+1}(H(\Tc))$ vanish, so we are indeed interested only in the even ones. Also, for special linear bundles there is an interconnection between the top Pontryagin class and the Euler class. The following lemma shows it in the case of $\rank 2$ bundles and the general case would be dealt with in Corollary~\ref{cor_dual}.

\begin{lemma}
\label{lemm_borel_cl}
Let $\Tc=(E,\lambda)$ be a rank $2$ special linear bundle. Then
$$
b_* (H(E)) =1-e(\Tc)^2t^2,\quad p_*(\Tc)=1+e(\Tc)^2t^2.
$$
\end{lemma}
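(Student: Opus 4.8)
The plan is to reduce everything to the rank-$2$ trivialized special linear bundle $(\triv^2_X,1)$ by a standard localization/splitting argument, compute there directly, and then transfer back. First I would observe that all the classes in play are stable under $\A^r$-bundle pullbacks and under base change, so by the usual argument (as in Lemma~\ref{lem_eulermult}) it suffices to prove the identity after pulling back along an $\A^r$-bundle $p\colon Y\to X$ over which $E$ splits as $p^*E\cong L\oplus L^{-1}$ with $L$ a line bundle; here $L^{-1}\cong L^\vee$ since $\det E$ is trivialized, so $(p^*E,p^*\lambda)\cong (L\oplus L^\vee, \lambda_{can})$ as a special linear bundle. Because the statement is about an equality in $A^*(X)$ and $p^A$ is an isomorphism (homotopy invariance), this reduction is harmless. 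In fact one can go further and note that $L$, being a line bundle over which we have freedom, can be taken to be $\triv_Y$ after a further affine-bundle pullback realizing the universal line bundle's zero locus, so the universal case is $(\triv^2_Y,1)$.

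Next I would compute $b_*(\Tc)$ and $p_*(H(E))$ in the split case $\Tc=(L\oplus L^\vee,\lambda_{can})$. By Lemma~\ref{lem_eulermult} we have $e(\Tc)=e(L,\cdot)\,e(L^\vee,\cdot)$, and by Lemma~\ref{lemm_dual_eu}(2) (together with $\overline\epsilon=-1$, Remark~\ref{rem_epsilon}) the Euler classes of $L$ and $L^\vee$ agree up to the relevant sign, so that $e(\Tc)^2$ is, up to sign bookkeeping, the square of a single line-bundle Euler class — but more usefully, for the hyperbolic bundle I want to identify $H(L\oplus L^\vee)\cong H(L)\perp H(L^\vee)$, or rather observe that $H(E)$ for $E=L\oplus L^\vee$ is itself already the hyperbolic form on a rank-$2$ bundle, so $p_*(H(E))=1+p_1(H(E))t^2$ with at most the $p_1$ term surviving by axiom (6) of the Pontryagin classes theory (rank $2r=4$, so $p_i=0$ for $i>2$, and I will need to check $p_2$ and the odd class $p_1$... wait, $p_1$ here is $p_1(H(E))$, an honest class). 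The key computation is then that $p_2(H(E))=0$ and $p_1(H(E)) = -e(\Tc)^2$. The vanishing of $p_2$ should follow because $H(E)$ with $E$ rank $2$ has an isotropic rank-$2$ subbundle (namely $E$ itself), forcing the top Pontryagin class of the rank-$4$ symplectic bundle to vanish — or more directly, $H(E)$ is metabolic, hence symplectically "trivial enough" that $p_2=0$; alternatively one identifies $H(L\oplus L^\vee)$ with a sum over which axiom (5) applies.

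For the sign identity $p_1(H(E)) = -e(\Tc)^2$, I would use Proposition~\ref{prop_section}: realize $e(\Tc)$ as $i_A i^A(1)$ for a transverse section, and realize $p_1(H(E))$ via the relation between the first Pontryagin class of a hyperbolic bundle and the self-intersection of the zero section — this is the standard fact that $b_1(E)=e(H(E)\text{-associated geometry})$ unwinds to $-e(E,\lambda)^2$ when $E$ has even rank and is special linearly oriented. In the rank-$2$ split case this is a direct check: compute both sides on $(\triv^2_{\PP^1\text{-analog}},1)$ or on $HP^1$-type test space using axiom (3) of the Pontryagin theory, matching coefficients. The main obstacle I anticipate is precisely the sign and the $\epsilon$-bookkeeping: one must be careful that after inverting $\eta$ we have $\overline\epsilon=-1$, that the symplectic form's sign $\left(\begin{smallmatrix}0&1\\-1&0\end{smallmatrix}\right)$ enters correctly, and that $e(\Tc)=e(\Tc^\vee)$ (the rank-$2$ self-duality lemma proved just above) is used to collapse the two line-bundle contributions into a single square with the correct sign. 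Once the split case is pinned down, functoriality (axiom (2) of both the orientation and the Pontryagin theory) and the affine-bundle reduction give the general rank-$2$ statement $p_*(H(E))=1-e(\Tc)^2t^2=b_*(\Tc)$, where the last equality is just the definition $b_*(\Tc)=\sum b_i t^{2i}=1+p_2(H(E))t^2=1+p_2\ldots$ — here I should double-check the indexing convention $b_i=p_{2i}$ against the claim, which for rank $2$ means $b_*(\Tc)=1+b_1 t^2=1+p_2(H(E))t^2$, so I actually need $p_2(H(E))=-e(\Tc)^2$ and $p_1(H(E))=0$; the odd Pontryagin class $p_1$ vanishing is the manifestation of the general "$p_{2i+1}(H(\Tc))=0$" fact promised in the text, and in rank $2$ it should be provable directly by the same split-bundle computation.
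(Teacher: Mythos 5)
Your opening reduction does not exist. The construction in Lemma~\ref{lem_eulermult} produces an affine bundle splitting an extension only when a subbundle is already given; it cannot manufacture a line subbundle of an arbitrary rank-$2$ bundle, and for an $SL$-oriented theory with $\eta$ inverted there is no rank-one splitting principle at all --- the splitting principle of this paper (Theorem~\ref{thm_split}, proved later and resting on the present lemma) splits bundles into rank-$2$ pieces precisely because rank-$2$ special linear bundles are the irreducible objects here. Your further reduction to the ``universal case'' $(\triv^2_Y,1)$ is self-refuting: on a trivial bundle $e(\Tc)=0$ and $p_*(H(\triv^2))=1$, so if an $A^*$-injective reduction to that case existed, the lemma would degenerate to $1=1$ and, worse, the same argument would force $e(\Tc)^2=0$ for every rank-$2$ special linear bundle, contradicting $e_1^{2n-1}\neq 0$ on $SGr(2,2n+1)$ (Theorem~\ref{theorem_proj_b}).

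Even in the split case the decisive computation is missing and internally inconsistent: in the body you argue $p_2(H(E))=0$ (via ``$H(E)$ is metabolic, so the top class vanishes'' --- false in this setting, and incompatible with the lemma, since the top class is $-e(\Tc)^2$) and $p_1(H(E))=-e(\Tc)^2$, while your final sentence correctly observes that what is needed is $p_1(H(E))=0$ and $p_2(H(E))=-e(\Tc)^2$; that last identity is then delegated to a ``standard fact'' which is exactly the even-rank statement Corollary~\ref{cor_dual}(4), deduced in the paper \emph{from} this lemma, so the appeal is circular. The actual proof needs no splitting of $E$: the trivialization $\lambda$ makes the rank-$2$ bundle $E$ itself symplectic with form $\phi$, and $H(E)\cong(E,\phi)\perp(E,-\phi)$; the Whitney formula gives $p_*(H(E))=(1+p_1(E,\phi)t)(1+p_1(E,-\phi)t)=(1+e(E,\lambda)t)(1+e(E,-\lambda)t)$, and $e(E,-\lambda)=\epsilon\cup e(E,\lambda)=-e(E,\lambda)$ (Lemma~\ref{lemm_epsilon} together with Remark~\ref{rem_epsilon}) yields $1-e(\Tc)^2t^2$ immediately, with the vanishing of the odd class and the value of $b_1$ read off from the coefficients.
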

\begin{proof}
Let $\phi$ be the symplectic form on $E$ corresponding to $\lambda$. There exists an isomorphism \cite[Examples~1.1.21,~1.1.22]{Bal2}
$$
\left(E\oplus E^\vee, \left(\begin{array}{cc}0 & 1\\ -1 & 0\end{array}\right)\right)\cong \left(E\oplus E, \left(\begin{array}{cc}\phi & 0\\ 0 & -\phi\end{array}\right)\right),
$$
so we have
\begin{multline*}
b_* (H(E))=b_*(E,\phi)b_*(E,-\phi)=\\
=(1+b_1(E,\phi)t)(1+b_1(E,-\phi)t)=(1+e(E,\lambda)t)(1+e(E,-\lambda)t).
\end{multline*}
By Lemma~\ref{lemm_epsilon} and Remark~\ref{rem_epsilon} we have $e(E,-\lambda)=-e(E,\lambda)$, thus
\[
b_* (H(E))=(1+e(\Tc)t)(1-e(\Tc)t)=1-e(\Tc)^2t^2.
\]
In order to obtain the formula for the total Pontryagin class one should change the sign in front of $b_2(H(E))=-e(\Tc)^2$.
\end{proof}

\section{Complement to the zero section.}

In this section we compute the cohomology of the complement to the zero section of a special linear vector bundle. It turns out that there is a good answer in terms of the characteristic classes only in the case of the odd rank.

Recall that for a special linear bundle $\Tc$ we denote by $\Tc^0$ the complement to the zero section. We start from the following lemma concerning the case of a special linear bundle possessing a section.

\begin{definition}
We denote an operator of the $\cup$-product with an element by the symbol of the element writing $\alpha$ for $-\cup \alpha$.
\end{definition}

\begin{lemma}
\label{lem_E0_noncan}
Let $\Tc$ be a rank $k$ special linear bundle over a smooth variety $X$ with a nowhere vanishing section $s\colon X\to \Tc$. Then for some $\alpha\in A^{k-1}(\Tc^0)$ we have an isomorphism
$$
(1,\alpha)\colon A^*(X)\oplus A^{*+1-k}(X) \to A^*(\Tc^0).
$$
\end{lemma}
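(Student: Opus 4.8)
The plan is to exploit the nowhere vanishing section $s$ to split $\Tc$ as a special linear bundle with a trivial line summand, reducing the computation of $A^*(\Tc^0)$ to a Gysin sequence whose connecting (or Euler-class) map vanishes. First I would note that a nowhere vanishing section $s$ of $\Tc$ trivializes a rank one subbundle $\triv_X \cong s(X) \le \Tc$; after passing to an appropriate $\A^r$-bundle $p\colon Y\to X$ (as in the proof of Lemma~\ref{lem_eulermult}, using homotopy invariance) we may assume $\Tc \cong \Tc' \oplus (\triv_X,1)$ as special linear bundles, where $\Tc' = \Tc/(\triv_X,1)$ is a rank $k-1$ special linear bundle (its canonical trivialization of the determinant comes from Lemma~\ref{quottriv} together with the trivialization of the line summand). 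Since cohomology is unchanged under $p^A$, it suffices to treat the split case.

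In the split case $\Tc^0 = (\Tc'\oplus\triv_X)^0$, and I would analyze this directly. Write $\Tc^0$ as the union of the two open subsets $U_1 = \{(v,t)\mid t\neq 0\}$ and $U_2 = \{(v,t)\mid v\neq 0\} = \Tc'^0 \times \A^1$. The first set $U_1$ is an $\A^{k-1}$-bundle over $\Gm\times X$, hence $A^*(U_1)\cong A^*(X)\oplus A^{*-1}(X)$ after trivializing; more usefully, $U_1$ retracts onto the section $s(X)$, so $U_1\simeq X$ and the inclusion $X=s(X)\hookrightarrow \Tc^0$ provides the "$1$"-component of the claimed isomorphism and a splitting of $j^A\colon A^*(\Tc^0)\to A^*(U_1)=A^*(X)$. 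Alternatively, and more cleanly, I would run the Gysin sequence of Definition~\ref{def_Gys1} for $\Tc^0$: the relevant piece is
\[
\cdots \to A^{*-k}(X)\xrightarrow{\cup e(\Tc)} A^*(X)\to A^*(\Tc^0)\xrightarrow{\partial} A^{*-k+1}(X)\xrightarrow{\cup e(\Tc)}\cdots
\]
By Lemma~\ref{lemm_triv_section}, the existence of the nowhere vanishing section $s$ forces $e(\Tc)=0$, so every $\cup e(\Tc)$ map is zero and the Gysin sequence breaks into short exact sequences
\[
0\to A^*(X)\to A^*(\Tc^0)\xrightarrow{\partial} A^{*-k+1}(X)\to 0 .
\]
The section $s\colon X\to\Tc$ lands in $\Tc^0$ and gives a retraction $s^A$ of $A^*(X)\to A^*(\Tc^0)$, so this sequence is (non-canonically) split: choosing any $A^*(X)$-linear splitting of $\partial$ produces an element $\alpha\in A^{k-1}(\Tc^0)$ with $\partial\alpha$ a unit generator, and then $(1,\alpha)\colon A^*(X)\oplus A^{*+1-k}(X)\to A^*(\Tc^0)$, $a\oplus b\mapsto p^A a + (p^A b)\cup\alpha$, is an isomorphism of $A^*(X)$-modules.

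The main obstacle is the bookkeeping in the first reduction: making precise that $\Tc$ with its nowhere vanishing section splits off a \emph{special linear} trivial line subbundle after an $\A^r$-bundle base change, and that the quotient $\Tc'$ inherits a special linear structure compatible with the splitting of Euler and Thom classes from Definition~4.6(4). Everything after that is a formal consequence of Lemma~\ref{lemm_triv_section} (vanishing of $e(\Tc)$), the Gysin sequence, and homotopy invariance; the only subtlety is to keep the isomorphism $A^*(X)$-linear, which is automatic since all maps in the Gysin sequence are $A^*(X)$-module maps and $s^A$ is an $A^*(X)$-algebra retraction. One should also record that the splitting element $\alpha$ is genuinely non-canonical, as the statement already warns.
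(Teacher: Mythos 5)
Your ``more cleanly'' argument is exactly the paper's proof: the Gysin sequence for $\Tc^0$ with $\cup\, e(\Tc)=0$ (forced by the nowhere vanishing section), the retraction $s^A$ splitting $j^A$ and hence yielding an $A^*(X)$-linear splitting $r$ of $\partial$, and $\alpha=r(1)$. The preliminary reduction via an $\A^r$-bundle to split off a trivial special linear line subbundle, and the cover $U_1\cup U_2$, are correct but superfluous, since the Gysin argument works directly over $X$.
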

\begin{proof}
Consider the Gysin sequence
$$
{ }\to A^{*-k}(X)\xrightarrow{0} A^*(X)\xrightarrow{j^A} A^*(\Tc^0) \xrightarrow{\partial_A} A^{*-k+1}(X)\xrightarrow{0} {}
$$
The section $s$ induces a splitting $s^A$ for $j^A$ hence gives a splitting $r$ for $\partial_A$. We have the claim for $\alpha=r(1)$.
\end{proof}

We want to obtain an isomorphism which does not depend on the choice of the section, so we act as one acts in the projective bundle theorem for oriented cohomology theories: take a certain special linear bundle over $\Tc^0$ and compute its Euler class.

\begin{definition}
Let $p\colon E\to X$ be a vector bundle over a smooth variety $X$. The tautological line subbundle $L_E$ of $(p^\ast E)|_{E^0}$ could be trivialized by means of the diagonal section $\Delta\colon E^0\to E^0\times_X E$. Hence, by {Lemma \ref{quottriv}}, for a special linear bundle $(E,\lambda)$ there exists a canonical trivialization
$$
\lambda_{\Tc_E}\colon \det (p^\ast E|_{E^0}/L_E) \xrightarrow{\simeq} \triv_{E^0}.
$$
Thus we obtain a special linear bundle $\Tc_E=\left(\left(p^\ast E|_{E^0}/L_E\right),\lambda_{\Tc_E}\right)$ over $E^0$.
\label{defTc}
\end{definition}

For the Witt groups there is a result by Balmer and Gille.
\begin{theorem}
\label{Balmer}
Let $(E,\lambda)=(\triv^{2n+1}_\pt,1)$ be a trivialized special linear bundle of odd rank over a point with $n\ge 1$. Then for $e=e(\Tc_E)\in W^{2n}(E^0)$ we have an isomorphism
$$
(1,e)\colon W^*(pt)\oplus W^{*-2n}(pt)\xrightarrow{\simeq} W^*(E^0).
$$
\end{theorem}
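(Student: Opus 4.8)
The plan is to exploit that $E^0=\A^{2n+1}-\{0\}$ is a motivic sphere and that, by Proposition~\ref{prop_BOWitt}, the Witt groups $W^*=\overline{\BO}{}^*$ are an instance of the theories $A^*(-)$ of the previous section, so that all the machinery applies. First I would record the module structure. Consider the localization (cofibre) sequence for $\{0\}\hookrightarrow\A^{2n+1}$. Since $W^*(\A^{2n+1})\cong W^*(pt)$ by homotopy invariance and the inclusion of the base point $x$ splits the map $W^*(pt)\to W^*(E^0)$, the sequence degenerates into split short exact sequences and the boundary map restricts to an isomorphism
\[
\partial\colon \widetilde{W}{}^*(E^0,x)\xrightarrow{\ \simeq\ }\widetilde{W}{}^{*+1}\big(\A^{2n+1}/E^0\big)=\widetilde{W}{}^{*+1}\big(T^{\wedge 2n+1}\big),
\]
whose target is identified by the Thom isomorphism for $(\triv^{2n+1}_{pt},1)$ with the free rank-one module $W^{*-2n}(pt)$ generated by $th(\triv^{2n+1}_{pt},1)=\Sigma_T^{2n+1}1$. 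Thus $W^*(E^0)\cong W^*(pt)\oplus W^{*-2n}(pt)$ as $W^*(pt)$-modules.

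The map $(1,e)\colon W^*(pt)\oplus W^{*-2n}(pt)\to W^*(E^0)$ is $W^*(pt)$-linear and is the split inclusion on the first summand. Since $\Tc_E|_x$ is a special linear bundle over $\Spec k$, it is trivial by Lemma~\ref{lem_triv_push}, hence has a nowhere vanishing section, so $e(\Tc_E)|_x=0$ by Lemma~\ref{lemm_triv_section}; therefore $e(\Tc_E)\in\widetilde{W}{}^*(E^0,x)$ and $(1,e)$ respects the decomposition above. As $\partial$ is an isomorphism, it remains to prove that $\partial\big(e(\Tc_E)\big)$ is a unit multiple of $th(\triv^{2n+1}_{pt},1)$.

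To compute $e(\Tc_E)$ I would use Proposition~\ref{prop_section}. Projecting a nonzero constant section of $\triv^{2n+1}_{E^0}$ into the quotient $\Tc_E$ yields a section $\bar s$ vanishing exactly along the punctured coordinate line $i\colon\Gm\to E^0$, $t\mapsto(t,0,\dots,0)$, and meeting the zero section transversally; hence $e(\Tc_E)=i_*(1)$, the pushforward taken with the special linear structure on $N_i$ induced from $\Tc_E|_{\Gm}$. A direct check shows this structure agrees, up to the square of a unit of $\mathcal{O}(\Gm)$, with the trivialization $\theta$ fixed in Section~5, hence gives the same Thom class and the same pushforward. Since the special linear Thom class of a trivialized rank $2n$ bundle is $\Sigma_T^{2n}1$, i.e.\ the image of the cohomotopy Thom class, the map $i_*$ in $W^*$ is the image of the cohomotopy pushforward $i_\pi$ under $\pi^{*,*}(-)\to\BO^{*,*}(-)\to W^*(-)$. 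By naturality of the boundary map and Proposition~\ref{prop_stable_coh} with $2n$ in place of $n$, $\partial\big(e(\Tc_E)\big)$ is the image of $\partial i_\pi(1)=(-1)^{2n}\epsilon\cup\Sigma_T^{2n+1}\eta$. Passing to $W^*$ we have $\eta=1$ and $\overline{\epsilon}=-1$ (Remark~\ref{rem_epsilon}), so
\[
\partial\big(e(\Tc_E)\big)=-\,\Sigma_T^{2n+1}1=-\,th(\triv^{2n+1}_{pt},1),
\]
a unit multiple of the generator. Therefore $(1,e)$ is an isomorphism. (Alternatively this is \cite[Theorem~8.13]{BG}.)

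The formal parts are the module computation and the reduction to a statement about a single Euler class. The real work is the middle step: checking transversality of $\bar s$ and, above all, carefully matching the transversality data produced by Proposition~\ref{prop_section} with the explicit normal-bundle trivialization $\theta$ underlying Proposition~\ref{prop_stable_coh}, i.e.\ tracking all the canonical identifications so that the latter proposition can be quoted verbatim. Everything after that is a one-line consequence of $\overline{\epsilon}=-1$.
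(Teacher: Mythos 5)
Your overall architecture is sound (split Gysin sequence, reduce to showing $\partial(e(\Tc_E))$ is a unit multiple of $\Sigma_T^{2n+1}1$, feed in Proposition~\ref{prop_stable_coh} and $\overline{\epsilon}=-1$), and note that the paper itself proves nothing here: Theorem~\ref{Balmer} is quoted from \cite[Theorem~8.13]{BG}, while your argument is essentially a re-run of the paper's proof of Lemma~\ref{lem_main} specialized to $W^*=\overline{\BO}^*$ via Proposition~\ref{prop_BOWitt}. The cleanest internal route would in fact be exactly that: apply Lemma~\ref{lem_main} with $A=\BO$ (Koszul orientation) and invoke Proposition~\ref{prop_BOWitt}, rather than redoing the Euler class computation.

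The genuine gap is in your middle step. You take a section of $\Tc_E$ itself, identify $N_i\cong\Tc_E|_{\Gm}$, observe that the induced special linear structure differs from the Section~5 trivialization $\theta$ by the square of a unit of $\mathcal{O}(\Gm)$ (correct: the discrepancy is an even power of $t$), and then conclude ``hence gives the same Thom class and the same pushforward.'' That implication is not available in the paper's framework. An isomorphism of special linear bundles $(N_i,\lambda_1)\cong(N_i,\lambda_2)$ only gives $th(N_i,\lambda_1)=\phi^A th(N_i,\lambda_2)$ for a nontrivial automorphism $\phi$ of $N_i$, and nothing proved in the paper says $\phi^A$ fixes the Thom class; the only statements about changing $\lambda$ are Lemma~\ref{lemm_epsilon} (sign change, giving $\epsilon$) and Lemma~\ref{lemm_dual_eu}, which yields equality of the submodules $A^{0,0}\cup e$, not of classes. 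Invariance of the Thom class under rescaling the trivialization by a square is true for Witt groups (rescale the generator of the Koszul form by $t^{1-n}$, an isometry), and is a known fact for $\eta$-inverted SL-oriented theories, but it needs an argument you don't give. This is precisely the issue the paper's proof of Lemma~\ref{lem_main} is engineered to avoid: there one works with the dual bundle $\Tc_E^\vee$, for which the explicit section vanishes on $\Gm$ and the computed trivialization of $\det N_i$ is matched \emph{on the nose} by $\theta$ (so $\theta$ is an isomorphism of special linear bundles and the pushforwards literally agree with $i_\pi$), and only afterwards transfers the basis statement from $e(\Tc_E^\vee)$ to $e(\Tc_E)$ using $e(\Tc_E^\vee)^2=0$ and Lemma~\ref{lemm_dual_eu}. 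Either patch your argument that way, or, since the statement concerns Witt groups specifically, justify the square-invariance concretely by the Koszul-complex isometry; as written, the ``hence'' is an unproved claim and the proof is incomplete.
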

\begin{proof}
See \cite[Theorem~8.13]{BG}.
\end{proof}

We can derive an analogous result for $A^{*}(-)$ from our computation in stable cohomotopy groups.
\begin{lemma}
\label{lem_main}
Let $(E,\lambda)=(\triv^{2n+1}_\pt,1)$, $n\ge 1$, be a trivialized special linear bundle over a point. Then for $e=e(\Tc_E)\in A^{2n}(E^0)$ we have an isomorphism
$$
(1,e)\colon A^*(pt)\oplus A^{*-2n}(pt)\xrightarrow{\simeq} A^*(E^0).
$$
\end{lemma}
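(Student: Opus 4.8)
The plan is to reduce the statement to the already-established stable cohomotopy computation of Proposition~\ref{prop_stable_coh}. Set $E^0=\A^{2n+1}-\{0\}$. By Lemma~\ref{lemm_triv_section} we have $e(\triv_{pt}^{2n+1})=0$, so the Gysin sequence of the zero section of $(\triv_{pt}^{2n+1},1)$ — which is the localization sequence of $\{0\}\to\A^{2n+1}$ after homotopy invariance and the standard Thom isomorphism — splits into short exact sequences
\[
0\to A^{*}(pt)\xrightarrow{j^A}A^{*}(E^0)\xrightarrow{\partial_A}A^{*-2n}(pt)\to 0 .
\]
Since $\partial_A$ is a homomorphism of $A^{*}(pt)$-modules, for any $\beta\in A^{2n}(E^0)$ multiplication by $\partial_A(\beta)\in A^{0}(pt)$ on $A^{*-2n}(pt)$ factors as $A^{*-2n}(pt)\xrightarrow{-\cup\beta}A^{*}(E^0)\xrightarrow{\partial_A}A^{*-2n}(pt)$, and a five lemma argument then shows that $(1,\beta)\colon A^{*}(pt)\oplus A^{*-2n}(pt)\to A^{*}(E^0)$ is an isomorphism whenever $\partial_A(\beta)$ is a unit. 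So it suffices to prove that $\partial_A(e(\Tc_E))$ is a unit.

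To that end I would first realize $e(\Tc_E)$ as a pushforward. The constant section $v\mapsto\bigl(v,(1,0,\dots,0)\bigr)$ of $\triv_{E^0}^{2n+1}$ descends to a section of $\Tc_E=\triv_{E^0}^{2n+1}/L_E$ whose zero locus is exactly $i(\Gm)$, where $i\colon\Gm\to E^0$, $t\mapsto(t,0,\dots,0)$, is the closed embedding of Section~5; a direct computation shows this section is transversal to the zero section and induces the identification $N_i\cong i^{*}\Tc_E$. Proposition~\ref{prop_section} then gives $e(\Tc_E)=i_Ai^A(1)=i_A(1)$, the pushforward along $i$ taken with respect to the special linear normal bundle $(i^{*}\Tc_E,i^{*}\lambda_{\Tc_E})$. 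Every vector bundle over $\Gm$ is trivial, so by Lemma~\ref{lem_triv_push} this special linear structure is isomorphic to $(\triv_{\Gm}^{2n},1)$, and hence also to the one induced by the trivialization $\theta$ used in Section~5; therefore $i_A(1)$ is the image of the stable cohomotopy class $i_\pi(1)$ under $\pi^{*,*}(-)\to A^{*,*}(-)\to A^{*}(-)$.

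Then everything is formal. Naturality of the localization sequence makes $\partial_A$ commute with the map from stable cohomotopy, so $\partial_A(e(\Tc_E))$ is the image of $\partial i_\pi(1)$. Proposition~\ref{prop_stable_coh}, applied with its parameter equal to $2n$, yields $\partial i_\pi(1)=(-1)^{2n}\epsilon\cup\Sigma^{2n+1}_T\eta=\epsilon\cup\Sigma^{2n+1}_T\eta$; since the Thom isomorphism appearing in the Gysin sequence is $-\cup\Sigma^{2n+1}_T 1$, applying its inverse and using that $\epsilon$ is central leaves the class $\epsilon\cup\eta$ in $A^{0}(pt)$, which by Remark~\ref{rem_epsilon} equals $(-1)\cdot 1=-1$. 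Thus $\partial_A(e(\Tc_E))=-1$ is a unit and $(1,e)$ is an isomorphism. The one genuinely delicate point is the middle paragraph — identifying the geometric Euler class $e(\Tc_E)$ with the explicitly computed stable class $i_\pi(1)$, i.e.\ verifying the transversality and, above all, checking that the normal-bundle trivializations and the $\epsilon$-signs match up so that Proposition~\ref{prop_stable_coh} can be invoked verbatim; once that is pinned down the remainder of the argument is routine.
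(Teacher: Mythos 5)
Your overall strategy is the paper's: reduce everything to showing that $\partial_A(e)$ is a unit, realize the relevant Euler class as a pushforward along $i\colon\Gm\to E^0$ via Proposition~\ref{prop_section}, and compare with the stable cohomotopy computation of Proposition~\ref{prop_stable_coh}. The one place where you deviate — working with a section of $\Tc_E$ itself instead of the paper's explicit section of $\Tc_E^\vee$ — would, if it worked, be a mild simplification, since the paper has to pay for its choice with the closing argument that transfers the basis $\{1,e(\Tc_E^\vee)\}$ to $\{1,e(\Tc_E)\}$ via Lemma~\ref{lemm_dual_eu} and $e(\Tc_E^\vee)^2=0$ (Corollary~\ref{cor_dual} is not yet available at this point). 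But exactly at the step you yourself flag as delicate there is a genuine gap. You argue that since every bundle over $\Gm$ is trivial, Lemma~\ref{lem_triv_push} makes $(N_i,i^*\lambda_{\Tc_E})$ isomorphic to $(\triv_\Gm^{2n},1)$ and hence to the structure induced by $\theta$, ``therefore $i_A(1)$ is the image of $i_\pi(1)$.'' This does not follow: the pushforward depends on the actual trivialization of $\det N_i$, not on the isomorphism class of the special linear structure (the paper warns of this: ``in general $i_A$ depends on the trivialization of $\det N_i$''). Over $\Gm$ \emph{all} trivializations of $\det N_i$ give isomorphic special linear bundles — that is precisely the content of Lemma~\ref{lem_triv_push} — yet, e.g.\ for Witt groups, the pushforwards attached to $\lambda$ and to $t\lambda$ differ by $\langle t\rangle\in A^0(\Gm)$, a unit that is not pulled back from the point; so an abstract isomorphism of special linear structures cannot transport Proposition~\ref{prop_stable_coh}, which is proved for the one specific trivialization $\theta$. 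What is actually needed is that the geometrically induced trivialization of $\det N_i$ coincides with the one induced by $\theta$; this is exactly the explicit determinant computation (the matrix with $1/t$ in the corner) occupying the middle of the paper's proof, and it is the content your argument omits.

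There is a further wrinkle caused by your choice of $\Tc_E$ rather than $\Tc_E^\vee$. Tracking the trivializations for your section $v\mapsto\overline{(1,0,\dots,0)}$: the canonical $\lambda_{\Tc_E}$ restricted to $\Gm$ contributes a factor $t$ (since $\lambda_{\Tc_E}(\bar e_1\wedge\dots\wedge\bar e_{2n})=\lambda(te_0\wedge e_1\wedge\dots\wedge e_{2n})=t$), while the identification $N_i\cong i^*\Tc_E$ by the differential of the section contributes $(-1/t)^{2n}$, so the induced trivialization differs from the one induced by $\theta$ by the square $t^{2-2n}$. To conclude you would still need to know that changing a trivialization by a square of a unit does not change the Thom class (true, but not established in the paper and not addressed by you), or to redo Proposition~\ref{prop_stable_coh} for a modified trivialization. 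This is presumably why the paper works with $\Tc_E^\vee$, for which the induced trivialization is $\tfrac1t\cdot(\text{standard})$ and matches $\theta$ on the nose, and then spends the last paragraph of the proof getting back to $e(\Tc_E)$. So: your skeleton is right and your sign bookkeeping at the end is correct, but the identification $e(\Tc_E)=\mathrm{image\ of\ }i_\pi(1)$ is not justified by Lemma~\ref{lem_triv_push}; it requires the explicit comparison of trivializations that constitutes the core of the paper's proof.
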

\begin{proof}
Consider the Gysin sequence
$$
{\dots }\to A^{*-2n-1}(pt)\xrightarrow{0} A^*(pt)\to A^*(E^0) \xrightarrow{\partial_A} A^{*-2n}(pt)\xrightarrow{0} {\dots}
$$
The bundle $E$ is trivial hence $e(E,\lambda)=0$ and the Gysin sequence consists of short exact sequences.

Consider the dual special linear bundle $\Tc_E^\vee$. Taking the dual trivialization of $E^\vee$ we obtain
\[
\Tc_E^\vee=\{(x_0,\dots,x_{2n},y_0,\dots,y_{2n})\in E^0\times E^{\vee}\,|\, x_0y_0+\dots+x_{2n}y_{2n}=0\}.
\]
There is a section $s\colon E^0\to \Tc_E^\vee$ with
\[
s(x_0,x_1,x_2,\dots,x_{2n-1},x_{2n})=(x_0,x_1,\dots,x_{2n}, 0,x_2,-x_1,\dots, x_{2n}, -x_{2n-1}).
\]
This section meets the zero section in $\Gm \cong \left\{(t,0,\dots,0) \,|\, t\neq 0\right\}$.
{Proposition~\ref{prop_section}} states that $e(\Tc_E^\vee)=i_A(1)$ for the inclusion $i\colon\Gm\to\A^{2n+1}-\{0\}$ with the trivialization of $\det N_i$ arising from the trivialization of $\det \Tc_E^\vee$. Identify $N_i\cong \Tc_E^\vee|_\Gm$ with $U=\Gm\times \A^{2n}\subset E^0$ via
\[
(t,0,\dots,0,0,y_1\dots,y_{2n}) \mapsto (t,y_1,\dots,y_{2n}).
\]
The isomorphism $\lambda_{\Tc_E^\vee}\colon \det \Tc_E^\vee|_\Gm\xrightarrow{\simeq} \triv_\Gm$ arises from the canonical trivialization of $E^\vee|_\Gm$ and morphism $\phi\colon E^\vee|_{\Gm} \to L_E^\vee|_\Gm\cong \triv_\Gm$  with
\[
\phi(t,y_0,y_1,\dots,y_{2n})=(t,ty_0).
\]
Thus over $t$ for $\mathbf{y}^i=(y_{1}^i,y_{2}^i,\dots,y_{2n}^i)$ we have
\[
\lambda_{\Tc_E^\vee} (\mathbf{y}^1\wedge\mathbf{y}^2\wedge\dots\wedge\mathbf{y}^{2n})
=
\det \left(
\begin{array}{ccccc}
1/t & 0 & 0 & \hdots & 0 \\
0 & y_{1}^1 & y_{1}^2 & \hdots & y_{1}^{2n} \\
0 & y_{2}^1 & y_{2}^2 & \hdots & y_{2}^{2n} \\
\vdots & \vdots & \vdots & \ddots & \vdots \\
0 & y_{2n}^1 & y_{2n}^2 & \hdots & y_{2n}^{2n}
\end{array}\right)
\]
and $\theta \colon (U,\lambda_{\Tc_E^\vee})\xrightarrow{\simeq} (\triv_{\Gm}^{2n},1)$ with $\theta(t,y_1,y_2,\dots,y_{2n})=(t,y_1/t,y_2,\dots,y_{2n})$ is an isomorphism of special linear bundles.

Consider the following diagram with $i_\pi$ being a pushforward in stable cohomotopy groups for the closed embedding $i$ with the trivialization $\theta$ of the normal bundle.
$$
\xymatrix{
A^0(\Gm) \ar^{i_A}[rr] & & A^{2n}(E^0) \ar[rr]^{\partial_A}& & A^{0}(pt) \\
(\pi_\eta^{*,*}(\Gm))^{0,0} \ar[rr]^{i_\pi}\ar[u]& & (\pi_\eta^{*,*}(E^0))^{2n,0}  \ar[u] \ar[rr]^{\partial_\pi} & & (\pi^{*,*}_\eta(pt))^{0,0}  \ar[u]\\
}
$$
The left-hand side commutes since $\theta$ is an isomorphism of special linear bundles. The right-hand side of the diagram consist of the structure morphisms for $A^*$ and the boundary maps for the Gysin sequences of the inclusion $\{0\}\to E$ hence commutes as well. Proposition~\ref{prop_stable_coh} states that  $\partial_\pi i_\pi (1)=-1$, thus
\[
\partial_A(e(\Tc_E^\vee))=\partial_A i_A (1)=-1.
\]
Hence, examining the short exact sequences
\[
0\to A^*(pt)\to A^*(E^0) \xrightarrow{\partial_A} A^{*-2n}(pt)\to 0
\]
given by the Gysin sequence, we obtain that $\{1, e(\Tc_E^\vee)\}$ is a basis of $A^*(E^0)$ over $A^{*}(pt)$. 

There is a nowhere vanishing section of $\Tc_E^\vee\oplus\Tc_E^\vee$ constructed analogous to $s$ defined above,
so
\[
e(\Tc_E^\vee)^2=e(\Tc_E^\vee\oplus\Tc_E^\vee)=0.
\]
Lemma~\ref{lemm_dual_eu} yields that for some $\alpha_1,\alpha_2,\beta_1,\beta_2\in A^*(pt)$ we have
$$
e=(\alpha_1+\beta_1\cup e(\Tc_E^\vee))\cup e(\Tc_E^\vee)=\alpha_1\cup e(\Tc_E^\vee),
$$
\[
e(\Tc_E^\vee)=(\alpha_2+\beta_2\cup e(\Tc_E^\vee))\cup e=\alpha_2\cup \alpha_1 \cup e(\Tc_E^\vee).
\]
We already know that $\{1, e(\Tc_E^\vee)\}$ is a basis, thus $\alpha_2\cup \alpha_1=1$ and $\alpha_1$ is invertible. Hence $\{1, \alpha_1\cup e(\Tc_E^\vee)\}=\{1, e\}$ is a basis as well.

\end{proof}

\begin{cor}
\label{cor_oriented}
Let $A^{*,*}(-)$ be an oriented cohomology theory represented by a commutative monoid $A\in \SH(k)$. Then
$A^{*}(pt)=0$.
\end{cor}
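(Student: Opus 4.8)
The plan is to feed the computation underlying Lemma~\ref{lem_main} into the special linear orientation that an oriented theory already carries, and to observe that in the oriented setting the Euler class occurring in that computation must vanish, which collapses the coefficient ring.

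Recall that an oriented theory $A^{*,*}(-)$ carries the special linear orientation $th(E,\lambda)=th(E)$, and it is with respect to this orientation that $\overline{A}^*(-)$ is formed. For this choice the Euler class $e(E,\lambda)$ of a special linear bundle is just the top Chern class $c_{\rank E}(E)$ of the underlying vector bundle, and hence vanishes whenever the underlying bundle is stably trivial (Whitney sum formula together with $c(\triv)=1$). Now take $E=(\triv^{2n+1}_{pt},1)$ with $n\ge 1$ and form $\Tc_E$ over $E^0=\A^{2n+1}-\{0\}$ as in Definition~\ref{defTc}. Its defining exact sequence $0\to L_E\to p^\ast E|_{E^0}\to \Tc_E\to 0$ has $L_E$ trivialized by the diagonal section and $p^\ast E|_{E^0}\cong\triv^{2n+1}_{E^0}$, so $\Tc_E$, and therefore also $\Tc_E^\vee$, is stably trivial; consequently $e(\Tc_E^\vee)=0$ in $\overline{A}^{2n}(E^0)$.

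On the other hand, the identifications carried out in the proof of Lemma~\ref{lem_main}, namely $\partial_A\bigl(e(\Tc_E^\vee)\bigr)=\partial_A i_A(1)=-1$ in $\overline{A}^0(pt)$, rest only on Proposition~\ref{prop_section} and Proposition~\ref{prop_stable_coh} and use nothing about the (non)existence of an orientation, so they remain valid here. Comparing the two computations gives $-1=\partial_A(0)=0$ in $\overline{A}^0(pt)$. Since $\overline{A}^*(pt)$ is a unital graded ring with $1\in\overline{A}^0(pt)$, the relation $1=0$ propagates through all degrees (for $x\in\overline{A}^i(pt)$ one has $x=1\cdot x=0$), and $\overline{A}^*(pt)=0$.

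The point deserving attention is the interplay of the two orientations: one must check both that the induced special linear Euler class really is the top Chern class, so that stable triviality of $\Tc_E^\vee$ annihilates it, and that the stable cohomotopy input responsible for $\partial_A i_A(1)=-1$ is blind to the orientation, so that it survives unchanged. Alternatively one can bypass Lemma~\ref{lem_main} altogether by first proving that the stable Hopf map $\eta$ itself vanishes in any oriented theory: the Hopf map $\A^2-\{0\}\to\Proj^1$ pulls the tautological line bundle back to the trivial bundle (via the diagonal section), hence kills the Chern class generating $\tilde{A}^{*,*}(\Proj^1)$ and induces the zero map on reduced cohomology, which by the definition of $\eta$ forces $\eta=0$; then $\overline{A}^*(pt)=A^{*,*}_\eta(pt)=0$ because localising at the nilpotent element $\eta$ produces the zero ring.
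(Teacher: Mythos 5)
Your main argument is correct and is essentially the paper's: the paper simply quotes the statement of Lemma~\ref{lem_main} with $E=\triv^3_{pt}$, so that $(1,c_2(\Tc_E))$ is an isomorphism, and then kills $c_2(\Tc_E)$ by the Whitney formula (stable triviality of $\Tc_E$), which forces the summand $\overline{A}^{*-2}(pt)$ to vanish; you instead unwind the proof of that lemma and use the boundary computation $\partial_A(e(\Tc_E^\vee))=\partial_A i_A(1)=-1$ directly, arriving at $-1=0$ --- the same two ingredients (the special linear Euler class is the top Chern class of a stably trivial bundle, hence zero, while the cohomotopy input of Proposition~\ref{prop_stable_coh} and Proposition~\ref{prop_section} is orientation-blind) in a slightly more hands-on packaging. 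Your closing alternative, however, is a genuinely different and more elementary route: you show $\eta$ maps to zero in any oriented theory because $H$ pulls the tautological line bundle $\triv(-1)$ on $\Proj^1$ back to a bundle trivialized by the diagonal section, so $H^A$ annihilates $c_1(\triv(-1))$, which generates the reduced cohomology $A^{*,*}(\Proj^1,[1:1])$ as an $A^{*,*}(pt)$-module, whence $H^A$ vanishes on all reduced classes; since $\eta_A$ is, up to the suspension identifications in the definition of $\eta$, the image under $H^A$ of a reduced class on $\Proj^1$, this gives $\eta_A=0$ and hence $A^{*,*}(pt)/\big(1-\eta\big)=0$ (inverting a zero element, not merely a nilpotent one, kills the ring). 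This bypasses Lemma~\ref{lem_main} and the entire pushforward machinery, at the cost of saying nothing about the special linear Euler classes the paper is exercising; the one step worth writing out carefully there is the identification of $\eta_A$ with $H^A$ applied to a generator of the reduced cohomology of $\Proj^1$, so that vanishing of $H^A$ really does entail $\eta_A=0$.
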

\begin{proof}
There is a natural special linear orientation on $A^{*,*}(-)$ obtained by setting $th(E,\lambda)=th(E)$ with the latter Thom class arising from the orientation of $A^{*,*}(-)$. Hence for a rank $n$ special linear bundle we have $e(E,\lambda)=c_n(E)$. By the above lemma, for $E=\triv_{pt}^3$ there is an isomorphism
$$
(1,c_2(\Tc_E))\colon A^*(pt)\oplus A^{*-2}(pt)\xrightarrow{\simeq} A^*(E^0).
$$
Multiplicativity of total Chern classes yields $c_*(\triv_{E^0})c_*(\Tc_E)=c_*(\triv^3_{E^0})$, hence $c_2(\Tc_E)=0$. The above isomorphism yields $A^*(pt)=0$.
\end{proof}

Having a canonical basis for a trivial bundle we can glue it and obtain a basis for the cohomology of the complement to the zero section of an arbitrary special linear bundle of odd rank.
\begin{theorem}
\label{thm_main}
Let $(E,\lambda)$ be a special linear bundle of rank $2n+1,n\ge 1,$ over a smooth variety $X$. Then for $e=e(\Tc_E)$ we have an isomorphism
$$
(1,e)\colon A^*(X)\oplus A^{*-2n}(X)\to A^*(E^0).
$$
\end{theorem}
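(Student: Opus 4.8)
\medskip
\noindent\textit{Sketch of proof.}
The plan is to reduce the statement, via a trivializing open cover of $X$ and a Mayer--Vietoris induction, to the case of a trivial bundle over a point, which is exactly Lemma~\ref{lem_main}.

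First I would treat the case $E=\triv_X^{2n+1}$, so that $(E,\lambda)\cong(\triv_X^{2n+1},1)$ by Lemma~\ref{lem_triv_push} and $E^0=X\times(\A^{2n+1}-\{0\})$. This bundle has a nowhere vanishing section, so $e(\triv_X^{2n+1},1)=0$ by Lemma~\ref{lemm_triv_section} and the Gysin sequence of the zero section degenerates into short exact sequences
\[
0\to A^*(X)\xrightarrow{p^A}A^*(E^0)\xrightarrow{\partial_A}A^{*-2n}(X)\to 0 .
\]
From Definition~\ref{defTc} one sees that $\Tc_E$ is the pullback of the tautological bundle on $\A^{2n+1}-\{0\}$ along the projection $q\colon X\times(\A^{2n+1}-\{0\})\to\A^{2n+1}-\{0\}$, whence $e=q^Ae_0$, where $e_0$ denotes the class appearing in Lemma~\ref{lem_main}. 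Naturality of the Gysin sequence along the projection $X\times\A^{2n+1}\to\A^{2n+1}$, together with the fact that Lemma~\ref{lem_main} forces $\partial_A e_0$ to be a unit of $A^0(pt)$, shows that $\partial_A e$ is a unit of $A^0(X)$. Since $\partial_A$ is $A^*(X)$-linear, $b\mapsto\partial_A\big(p^A(b)\cup e\big)=b\cup\partial_A e$ is an isomorphism $A^{*-2n}(X)\to A^{*-2n}(X)$; combined with the displayed sequence this yields that $(1,e)$ is an isomorphism when $E$ is trivial.

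For the general case I would induct on the cardinality $m$ of a finite open cover $X=U_1\cup\dots\cup U_m$ with each $E|_{U_i}$ trivial (such a cover exists, $X$ being quasi-compact and vector bundles being Zariski-locally trivial), the case $m=1$ being the one above. Put $U=U_1\cup\dots\cup U_{m-1}$ and $V=U_m$; then the theorem holds for $E|_U$, $E|_V$ and $E|_{U\cap V}$ by the inductive hypothesis. Since the restriction of $\Tc_E$ to $(E|_W)^0$ is $\Tc_{E|_W}$ for every open $W\subseteq X$, the class $e=e(\Tc_E)$ restricts to the corresponding Euler classes, and being the Euler class of a rank $2n$ bundle it is central. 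Hence $(1,e)$ defines a morphism from the direct sum of the Mayer--Vietoris sequences of $X=U\cup V$ in degrees $*$ and $*-2n$ to the Mayer--Vietoris sequence of $E^0=(E|_U)^0\cup(E|_V)^0$, the squares commuting because $p^A$ and $\cup\,e$ commute with restriction and with the Mayer--Vietoris boundary (the latter by $A^*(E^0)$-linearity of the boundary and its naturality with respect to $p$). The five lemma then gives the assertion for $X$.

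The genuine content sits entirely in Lemma~\ref{lem_main}; the remaining work is bookkeeping, and the only point needing care is checking the compatibilities required by the five lemma, namely that $\Tc_E$ and its Euler class behave well under pullback and restriction. I would also stress that for general $X$ one cannot argue directly from the Gysin sequence of the zero section of $(E,\lambda)$, since it is not a priori evident that $e(E,\lambda)$ vanishes; this is exactly why the reduction to the trivial case is performed before invoking exact sequences.
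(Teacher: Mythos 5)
Your argument is correct and follows essentially the same route as the paper: in the trivial case you compare the Gysin sequence of $(\triv_X^{2n+1},1)$ with that over the point, using that Lemma~\ref{lem_main} makes $\partial_A$ of the Euler class invertible and that $\Tc_E$ pulls back from $\A^{2n+1}-\{0\}$, and then you reduce the general case by Mayer--Vietoris. The only difference is that you spell out the Mayer--Vietoris induction and five-lemma compatibilities which the paper dismisses as ``the usual arguments''; this is harmless and fine.
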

\begin{proof}
The general case is reduced to the case of the trivial vector bundle $E$ via the usual Mayer-Vietoris arguments. In the latter case we have a commutative diagram of the Gysin sequences
$$
\xymatrix{
0 \ar[r] & A^*(X) \ar[r]& A^*(E^0) \ar[r]^(0.50){\partial_A} & A^{*-2n}(X) \ar[r]& 0\\
0 \ar[r] & A^*(pt) \ar[r]\ar[u]& A^*(E'^0) \ar[r]^{\partial_A} \ar[u] & A^{*-2n}(pt)\ar[u]^{p^A} \ar[r]& 0
}
$$
with $E'=\triv_{pt}^{2n+1}$. By Lemma~\ref{lem_main} the element $\partial_A(e\left(\Tc_{E'}\right))$ generates $A^{*-2n}(pt)$ as a module over $A^{*}(pt)$, thus for a certain $\alpha\in A^{*}(pt)$ we have $\alpha\cup\partial_A(e\left(\Tc_{E'}\right))=1$. Using $E=p^*E'$ we obtain
$$
\alpha\cup\partial_A(e\left(\Tc_E\right))=\alpha\cup p^A\partial_A(e\left(\Tc_{E'}\right))=1,
$$
so $\partial_A\left(e\left(\Tc_E\right)\right)$ generates $A^{*-2n}(X)$ over $A^{*}(X)$. Hence $(1,e)$ is an isomorphism.
\end{proof}

\begin{rem}
In case of $\rank E=1$ one still has an isomorphism: a special linear bundle of rank one is a trivialized line bundle, hence there is an isomorphism
$$
A^*(X)\oplus A^*(X)\cong A^*(E^0)=A^*(X\times \Gm)
$$
induced by the isomorphism $A^*(pt)\oplus A^*(pt)\cong A^*(\Gm)$.
\end{rem}

\begin{cor}
Let $\Tc$ be a special linear bundle of odd rank over a smooth variety $X$. Then $e(\Tc)=0$.
\end{cor}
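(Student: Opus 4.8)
The plan is to deduce the vanishing from Theorem~\ref{thm_main} together with the existence of a tautological nowhere vanishing section on an appropriate pullback.

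First I would dispose of the rank one case: a special linear bundle of rank $1$ is a trivialized line bundle, so it admits a nowhere vanishing section and $e(\Tc)=0$ by Lemma~\ref{lemm_triv_section} (this is also recorded in the Remark preceding the corollary). So assume $\rank\Tc=2n+1$ with $n\ge 1$ and write $\Tc=(E,\lambda)$. Consider the projection $p\colon E^0\to X$. The pulled back special linear bundle $p^*\Tc$ over $E^0$ carries the tautological diagonal section $\Delta\colon E^0\to E^0\times_X E$ from Definition~\ref{defTc}, which is nowhere vanishing; hence $e(p^*\Tc)=0$ by Lemma~\ref{lemm_triv_section}, and by functoriality of Euler classes (condition (2) in the definition of a special linear orientation) $p^A e(\Tc)=e(p^*\Tc)=0$. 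On the other hand, Theorem~\ref{thm_main} provides an isomorphism of $A^*(X)$-modules $(1,e(\Tc_E))\colon A^*(X)\oplus A^{*-2n}(X)\xrightarrow{\simeq} A^*(E^0)$ whose restriction to the first summand is the pullback $p^A$; in particular $p^A$ is (split) injective. Combining $p^A e(\Tc)=0$ with the injectivity of $p^A$ forces $e(\Tc)=0$.

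There is essentially no serious obstacle here: the two points requiring a moment of care are the identification of the $1$-component of the isomorphism in Theorem~\ref{thm_main} with $p^A$, and the fact that Theorem~\ref{thm_main} is stated only for $n\ge 1$, which is exactly why the rank one case is treated separately above.
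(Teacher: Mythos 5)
Your proof is correct and is essentially the paper's argument: the key input in both is Theorem~\ref{thm_main}, which makes the restriction $p^A\colon A^*(X)\to A^*(\Tc^0)$ (split) injective, so it suffices to know that $e(\Tc)$ restricts to zero on $\Tc^0$. The paper obtains that vanishing from exactness of the Gysin sequence, while you verify it directly via the tautological diagonal section and Lemma~\ref{lemm_triv_section}, treating the rank-one case separately exactly as the remark before the corollary does --- a cosmetic difference only.
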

\begin{proof}
Set $\rank \Tc=2n+1$ and $e=e(\Tc)$. Consider the Gysin sequence
$$
{} \to  A^{0}(X) \xrightarrow{e} A^{2n+1}(X) \xrightarrow{j^A} A^{2n+1}(\Tc^0) \to A^{1}(X) \to {}
$$
The above calculations show that $j^A$ is injective hence $e=0$.
\end{proof}

\section{Special linear projective bundle theorem.}
In this section we obtain a special linear version of the projective bundle theorem. First of all we introduce the varieties that act as the projective spaces in the special linear context.

\begin{definition}
For $k<n$ consider the group
$$
P_k'=
\left(
\begin{array}{cc}
SL_k & *\\
0 & SL_{n-k}
\end{array}
\right).
$$
The quotient variety $SGr(k,n)=SL_n/P_k'$ is called a \textit{special linear Grassmann variety}. Put $SGr_X(k,n)=X\times SGr(k,n)$. We denote by $\Tc_1$ and $\Tc_2$ the \textit{tautological special linear bundles} over $SGr_X(k,n)$ with $\rank \Tc_1=k$ and $\rank \Tc_2 =n-k$.
\end{definition}
\begin{rem}
We have a projection $SL_n/P_k'\to SL_n/P_k$ identifying the special linear Grassmann variety with the complement to the zero section of the determinant of the tautological vector bundle over the ordinary Grassmann variety $Gr(k,n)$. This yields the following geometrical description of $SGr(k,n)$: fix a vector space $V$ of dimension $n$. Then
$$
SGr(k,n)=\{(U\le V, \lambda\in (\Lambda^k U)^0)\,|\, \dim U=k\}.
$$
In particular, we have $SGr(1,n)\cong \A^{n}-\{0\}$.
\end{rem}

\begin{theorem}
\label{theorem_proj_b}
For a smooth variety $X$ we have the following isomorphisms.
$$
(1,e_1,...,e_1^{2n-2},e_2)\colon \bigoplus\limits_{i=0}^{2n-2} A^{*-2i}(X)\oplus A^{*-2n+2}(X)\to A^*(SGr_X(2,2n)),
$$
$$
(1,e_1,e_1^2,...,e_1^{2n-1})\colon \bigoplus\limits_{i=0}^{2n-1} A^{*-2i}(X)\to A^*(SGr_X(2,2n+1)),
$$
with $e_1=e(\Tc_1)$, $e_2=e(\Tc_2)$.
\end{theorem}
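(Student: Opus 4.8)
The plan is to build $SGr(2,m)$ from $SGr(2,m-1)$ by a cellular--type localization sequence and to run an induction on $m$ that alternates between the two cases ($SGr(2,2n)$ reduces to $SGr(2,2n-1)$, and $SGr(2,2n+1)$ reduces to $SGr(2,2n)$); the base cases $SGr(2,2)\cong\Gm$ and $SGr(2,3)\cong\A^3-\{0\}$ (the latter via the Plücker embedding $Gr(2,3)\cong\Proj^2$ together with Theorem~\ref{thm_main}) are checked directly. Fix a hyperplane $V'\subset V=k^m$. The $2$-planes contained in $V'$ form a closed subvariety $SGr(2,m-1)\hookrightarrow SGr(2,m)$, and its open complement --- the pairs $(U,\lambda)$ with $U\not\subseteq V'$ --- maps to $SGr(1,m-1)=\A^{m-1}-\{0\}$ by sending $(U,\lambda)$ to the line $U\cap V'$ with the trivialization of $\Lambda^2 U\cong(U\cap V')\otimes(V/V')$ induced by $\lambda$; this map is an affine bundle, so the open stratum has the cohomology of $\A^{m-1}-\{0\}$. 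The normal bundle of the embedding is $\Tc_1^\vee$, of rank $2$ and canonically special linear since $\det\Tc_1$ is trivialized on $SGr$, so Proposition~\ref{prop_section} gives $i_A(1)=e_1$ and $i_Ai^A=(-\cup e_1)$. I would first record the behaviour of the characteristic classes: $i^Ae_1=e_1$ and $i^Ae_2=0$ (the restriction of $\Tc_2$ acquires a trivial summand), while on the open stratum $\Tc_1$ becomes trivial, so $j^Ae_1=0$, and $\Tc_2$ becomes the tautological quotient bundle $\Tc$ over $\A^{m-1}-\{0\}$, so $j^Ae_2=e(\Tc)$.

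\textbf{The even case.} For $SGr(2,2n)$, granting the result for $SGr(2,2n-1)$: the open stratum is $\A^{2n-1}-\{0\}$, whose cohomology is $A^*(pt)\oplus A^{*-2n+2}(pt)e(\Tc)$ by Theorem~\ref{thm_main}, and $j^A$ is surjective since $j^A(1)=1$ and $j^A(e_2)=e(\Tc)$. Hence the connecting homomorphism vanishes and the localization sequence is a short exact sequence of $A^*(pt)$-modules, split because the quotient is free. By induction $A^*(SGr(2,2n-1))$ is free on $1,e_1,\dots,e_1^{2n-3}$, and by the projection formula $i_A$ carries $e_1^i$ to $e_1^{i+1}$, so the image of $i_A$ is the free submodule on $e_1,\dots,e_1^{2n-2}$; a section of $j^A$ supplies $1$ and $e_2$. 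This gives exactly the claimed basis $1,e_1,\dots,e_1^{2n-2},e_2$.

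\textbf{The odd case, and the main obstacle.} For $SGr(2,2n+1)$, granting the result for $SGr(2,2n)$: the same localization sequence has open stratum $\A^{2n}-\{0\}$, which is again free of rank two over $A^*(pt)$, on $1$ and a fundamental class $\beta$ of degree $2n-1$ --- but $\beta$ is not expressible through characteristic classes, so no cheap surjectivity argument is available, and the connecting map $\partial$ will be nonzero. The step I expect to be the main obstacle is to compute $\gamma:=\partial(\beta)\in A^{2n-2}(SGr(2,2n))$ and to show that, in terms of the basis from the even case, $\gamma=u\,e_2+(\text{an element of the ideal }(e_1))$ with $u\in A^0(pt)$ invertible; equivalently, that $\{1,e_1,\dots,e_1^{2n-2},\gamma\}$ is again a free $A^*(pt)$-basis of $A^*(SGr(2,2n))$. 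I would establish this by tracking $\partial$ through the canonical motivic identifications in the manner of \S5: restricting $\gamma$ to the open substratum $SGr(1,2n-1)\subset SGr(2,2n)$ identifies it with $\pm e(\Tc)$ by the computation underlying Proposition~\ref{prop_stable_coh}, while the image of $\gamma$ in $A^*(SGr(2,2n-1))$ is controlled inductively. Granting this, $\gamma$ is a free basis element, so the kernel of $\partial$ is $A^*(pt)\cdot 1$ and the kernel of $i_A$ is the image of $\partial$, namely $A^*(pt)\gamma$; hence $i_A$ induces an isomorphism of the free module $A^{*-2}(SGr(2,2n))/A^*(pt)\gamma$, with basis $1,e_1,\dots,e_1^{2n-2}$, onto the image of $i_A$, carrying $e_1^i$ to $e_1^{i+1}$, and adjoining the lift $1$ of $1\in A^*(pt)\cdot 1$ produces the asserted basis $1,e_1,\dots,e_1^{2n-1}$ of $A^*(SGr(2,2n+1))$.
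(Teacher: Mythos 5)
Your base case, your even-dimensional step, and the formal skeleton of your odd-dimensional step all coincide with the paper's argument (same stratification $SGr(2,m-1)\hookrightarrow SGr(2,m)$ with open complement an affine bundle over $SGr(1,m-1)$, same use of Proposition~\ref{prop_section} to get $i_Ai^A=-\cup e_1$, same splitting of the localization sequence in the even case via $j^Ae_2=e(\Tc)$). The problem is the step you yourself flag as the main obstacle: the claim that $\gamma=\partial(\beta)$ restricts on the open stratum $SGr(1,2n-1)\subset SGr(2,2n)$ to $\pm e(\Tc)$ (or a unit multiple of it). This is exactly the decisive point of the whole theorem, and you have not proved it; you have only asserted that it follows from ``the computation underlying Proposition~\ref{prop_stable_coh}.'' That proposition computes the boundary, for the pair $(\A^{n+1},\A^{n+1}-\{0\})$, of the pushforward class $i_\pi(1)$ -- i.e.\ of the Euler-class generator -- and it is consumed in Lemma~\ref{lem_main} as the statement $\partial_A(e(\Tc_E^\vee))=-1$. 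What you need is a different computation: the connecting homomorphism of the pair $\bigl(SGr(2,2n+1)\setminus SGr(2,2n-1),\,SGr(2,2n)\setminus SGr(2,2n-1)\bigr)$ evaluated on the non-characteristic generator $\beta$ of $A^*(\A^{2n}-\{0\})$, which moreover is only defined through a choice of splitting coming from a section (Lemma~\ref{lem_E0_noncan}), so one first has to pin down an explicit representative before one can ``track it through the canonical identifications.'' No reduction of this boundary map to the model of Section~5 is given, and producing one would be a computation of comparable length to Proposition~\ref{prop_stable_coh} itself, not a citation of it. As it stands, the odd case is not proved.

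Note that the paper closes exactly this point without any new motivic-level computation: one shows $i_A(e_2(2,2n))=0$ by writing $e_2(2,2n)=r'_A(1)$ for the closed embedding $r'\colon SGr(1,2n-1)\to SGr(2,2n)$ (Proposition~\ref{prop_section}), composing pushforwards through the square with $SGr(1,2n)\to SGr(2,2n+1)$ (Proposition~\ref{prop_composition}), and using that the relevant class on $SGr(1,2n)$ is the Euler class of a trivialized line bundle, hence zero. Since $i_A\circ\partial=0$ and $\operatorname{Im}\partial=A^*(pt)\,\partial_A(\alpha)$, writing $e_2=y\cup\partial_A(\alpha)$ and expanding $\partial_A(\alpha)$ in the known basis forces $\partial_A(\alpha)=z\cup e_2$ with $z$ invertible -- which is precisely the statement you want about $\gamma$, obtained by linear algebra over the basis from the even case rather than by a direct boundary computation. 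If you replace your unproved restriction claim by this argument (or supply the missing boundary computation in full), the rest of your write-up goes through.
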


\begin{proof}
We are going to deal with several special linear Grassmann varieties at once, so we will use  $\Tc_i(r,k)$ for $\Tc_i$ over $SGr_X(r,k)$ and abbreviate $e(\Tc_i(r,k))$ to $e_i(r,k)$ and $e(\Tc_i(r,k)^\vee)$ to $e_i^\vee(r,k)$.
The proof is done by induction on the Grassmannian's dimension.

\textbf{The base case.}
We have $SGr_X(2,3)\cong SGr_X(1,3)\cong \A^3_X-\{0\}$ and under these isomorphisms the bundle $\Tc_1(2,3)^\vee$ goes to $\Tc_2(1,3)$ which goes to $\Tc_{\triv_{_X}^3}$ in the notation of definition~\ref{defTc}. Note that $\rank \Tc_1(2,3)=2$, thus $\Tc_1(2,3)\cong \Tc_1(2,3)^\vee$ and $e(\Tc_1(2,3))=e\left(\Tc_1(2,3)^\vee\right)$. Hence {Theorem~\ref{thm_main}} gives the claim for $SGr_X(2,3)$.

\textbf{Basic geometry.}
Fix a vector space $V$ of dimension $k+1$, a subspace $W\le V$ of codimension one and forms $\mu_1\in (\Lambda^{k+1}V)^0,\mu_2\in (\Lambda^{k}W)^0$. Then we have the following diagram constructed in the same vein as in the case of ordinary Grassmannians: 
$$
\xymatrix{
SGr(2,k)\ar[r]^(0.43){i} & SGr(2,k+1) &Y \ar[l]_(0.3){j} \ar[d]^{p}\\
& & SGr(1,k)
}
$$
the inclusion $i$ corresponds to the pairs $(U,\mu\in(\Lambda^2U)^0)$ with $U\le W$, $\dim U=2$; the open complement $Y$ consists of the pairs $(U,\mu\in(\Lambda^2U)^0)$ with $\dim U=2, \dim U\cap W=1$; the projection $p$ is given by $p(U,\mu)=(U\cap W, \mu')$ where $\mu'$ is given by the isomorphisms $\Lambda^kW\otimes V/W\cong \Lambda^{k+1}V$ and $\Lambda^{k-1}(U\cap W)\otimes V/W\cong \Lambda^2 U$ and forms $\mu_1,\mu_2$ and $\mu$. Here $i$ is a closed embedding, $j$ is an open embedding and $p$ is an $\A^k$-bundle. Take an arbitrary $f\in V^\vee$ such that $\ker f=W$. It gives rise to a constant section of the trivial bundle $\left(\triv_{SGr(2,k+1)}^{k+1}\right)^\vee$ hence a section of $\Tc_1(2,k+1)^\vee$. The latter section vanishes exactly over $i(SGr(2,k))$. Note that we have $\rank \Tc_1(2,k+1) =2$ hence $e^\vee_1(2,k+1)=e_1(2,k+1)$.

\textbf{k=2n-1.}
Identify $A^*(X\times Y)\cong A^*(SGr_X(1,2n-1))$ via $p^A$ and consider the localization sequence.
\[
\to A^{*-2}(SGr_X(2,2n-1))\xrightarrow{i_A} A^*(SGr_X(2,2n))\xrightarrow{j^A} A^*(SGr_X(1,2n-1))\to
\]
Theorem~\ref{thm_main} states that $\{1,e_2(1,2n-1)\}$ is a basis of $A^*(SGr_X(1,2n-1))$ over $A^*(X)$. We have $j^*\Tc_2(2,2n)\cong p^*\Tc_2(1,2n-1)$ and
$$
j^A(e_2(2,2n))=e_2(1,2n-1)
$$
hence $j^A$ is a split surjection (over $A^*(X)$) with the splitting defined by
$$
1\mapsto 1,\, e_2(1,2n-1)\mapsto e_2(2,2n).
$$
Then $i_A$ is injective. Hence to obtain a basis of $A^*(SGr_X(2,2n))$ it is sufficient to calculate the pushforward for a basis of $A^{*-2}(SGr_X(2,2n-1))$ and combine it with $\{1,e_2(2,2n)\}$. Using the induction we know that
$$
\{1,e_1(2,2n-1),\dots,e_1(2,2n-1)^{2n-3}\}
$$
is a basis of  $A^{*}(SGr_X(2,2n-1))$. We have $i^*(\Tc_1(2,2n))\cong \Tc_1(2,2n-1)$ hence
$$
e_1(2,2n-1)=i^A(e_1(2,2n)).
$$
By Proposition~\ref{prop_section} we have
$$
i_A(e_1(2,2n-1)^l)=e_1(2,2n)^{l+1}
$$
obtaining the desired basis
$$
\{e_1(2,2n),e_1(2,2n)^2,\dots,e_1(2,2n)^{2n-2},1,e_2(2,2n)\}
$$ of $A^*(SGr_X(2,2n))$ over $A^*(X)$.

\textbf{k=2n.} Again identify $A^*(X\times Y)\cong A^*(SGr_X(1,2n))$ via $p^A$ and consider the localization sequence.
$$
\xrightarrow{\partial_A} A^{*-2}(SGr_X(2,2n))\xrightarrow{i_A} A^*(SGr_X(2,2n+1))\xrightarrow{j^A} A^*(SGr_X(1,2n))\xrightarrow{\partial_A}
$$
Using the induction we know a basis of $A^{*}(SGr_X(2,2n))$, namely
$$
\{1,e_1(2,2n),e_1(2,2n)^2,\dots,e_1(2,2n)^{2n-2},e_2(2,2n)\}
$$
and Lemma~\ref{lem_E0_noncan} gives us a non-canonical basis $\{1,\alpha\}$ for $A^*(SGr_X(1,2n))$. Examine $i_A(e_2(2,2n))$. It can't be computed using Proposition~\ref{prop_section} since it seems that $e_2(2,2n)$ can not be pullbacked from $A^*(SGr_X(2,2n+1))$, so we use the following argument. Consider a nonzero vector $w\in W$. It induces constant sections of $\triv^{2n}_{SGr(2,2n)}$ and $\triv^{2n+1}_{SGr(2,2n+1)}$ and sections of $\Tc_2(2,2n)$ and $\Tc_2(2,2n+1)$. The latter sections vanish over $SGr_X(1,2n-1)$ and $SGr_X(1,2n)$ respectively. Here $SGr_X(1,2n-1)$ corresponds to the vectors in $W/\langle w\rangle$ and $SGr_X(1,2n)$ corresponds to the vectors in $V/\langle w\rangle $. Hence we have the following commutative diagram consisting of closed embeddings.
$$
\xymatrix{
SGr_X(1,2n-1) \ar[r]^{r'} \ar[d]^{i'} & SGr_X(2,2n) \ar[d]^{i} \\
SGr_X(1,2n) \ar[r]^{r} & SGr_X(2,2n+1)
}
$$
By Proposition~\ref{prop_section} we have $e_2(2,2n)=r'_A(1)$, so, using Proposition~\ref{prop_composition}, we obtain
$i_A(e_2(2,2n))=r_Ai'_A(1)$. Notice that $N_{i'}$ is a trivial bundle of rank one. In fact, there is a section of trivial bundle $\Tc_1(1,2n)^\vee$ over $SGr_X(1,2n)$ constructed using the same element $f$ such that $\ker f=W$ and this section meets the zero section exactly at $SGr_X(1,2n-1)$. So we have
$$
i_A(e_2(2,2n))=r_Ai'_A(1)=r_A(e_1^\vee(1,2n))=r_A(0)=0.
$$

We claim that $\ker i_A=A^*(X)\cup e_2(2,2n)$ and $\operatorname{Im} j^A =A^*(X)\cup 1$. We have $j^A(1)=1$, hence $\partial_A(1)=0$ and
$$
\ker i_A=\operatorname{Im} \partial_A = A^*(X)\cup\partial_A(\alpha).
$$
The localization sequence is exact, so we have
$$
e_2(2,2n)=\partial_A(y\cup\alpha)=y\cup\partial_A(\alpha)
$$
for some $y\in A^*(X)$ and since $e_2(2,2n)$ is an element of the basis, $y$ is not a zero divisor. Consider the presentation of $\partial_A(\alpha)$ with respect to the chosen basis:
$$
\partial_A(\alpha)=x_0\cup 1+x_1\cup e_1(2,2n)+\dots +x_{2n-2}\cup e_1(2,2n)^{2n-2}+z\cup e_2(2,2n).
$$
We have $y\cup\partial_A(\alpha)=e_2(2,2n)$, hence $y\cup z=1$ and every $y\cup x_i=0$, hence $x_i=0$. Then $\partial_A(\alpha)=z\cup e_2(2,2n)$ and
$$
\ker i_A=\operatorname{Im} \partial_A=A^*(X)\cup\partial_A(\alpha) =A^*(X)\cup e_2(2,2n).
$$
We have
$$
\partial_A(x_0\cup 1+x_1 \cup \alpha)=x_1\cup\partial_A(\alpha)=x_1\cup z\cup e_2(2,2n),
$$
hence $\operatorname{Im} j^A=\ker \partial_A=A^*(X)\cup 1$.

There is an obvious splitting for $A^*(SGr_X(2,2n+1))\xrightarrow{j^A} \operatorname{Im} j^A$, $1\mapsto 1$. Then
calculating by the same vein as in the odd-dimensional case the pushforwards for the basis of $\operatorname{Coker} \partial_A$, $\{e_1(2,2n)^l\}$,  and adding to them $\{1\}$, we obtain the desired basis of $SGr_X(2,2n+1)$
\[
\{e_1(2,2n+1),\dots,e_1(2,2n+1)^{2n-1},1\}. \qedhere
\]
\end{proof}

\begin{definition}
Let $\Tc$ be a rank $n$ special linear bundle over a smooth variety $X$. We define the \textit{relative special linear Grassmann variety} $SGr(k,\Tc)$ twisting $SGr_X(k,n)$ with the cocycle $\gamma\in H^1(X,SL_n)$ given by $\Tc$. This variety is an $SGr(k,n)$-bundle over $X$. Similarly to the above, we denote by $\Tc_1$ and $\Tc_2$ the tautological special linear bundles over $SGr(k,\Tc)$.
\end{definition}

\begin{theorem}
\label{theorem_proj_b2}
Let $\Tc$ be a special linear bundle over a smooth variety $X$.
\begin{enumerate}
\item
If $\rank \Tc=2n$ then there is an isomorphism
$$
(1,e_1,...,e_1^{2n-2},e_2)\colon \bigoplus\limits_{i=0}^{2n-2} A^{*-2i}(X)\oplus A^{*-2n+2}(X)\xrightarrow{\simeq} A^*(SGr(2,\Tc)),
$$
with $e_1=e(\Tc_1)$, $e_2=e(\Tc_2)$.
\item
If $\rank \Tc=2n+1$ then there is an isomorphism
$$
(1,e,e^2,...,e^{2n-1})\colon \bigoplus\limits_{i=0}^{2n-1} A^{*-2i}(X)\xrightarrow{\simeq} A^*(SGr(2,\Tc)),
$$
with $e=e(\Tc_1)$.
\end{enumerate}
\end{theorem}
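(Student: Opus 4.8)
The plan is to deduce the relative statement from the absolute one, Theorem~\ref{theorem_proj_b}, by a Zariski-local-to-global argument, exploiting that \emph{every} auxiliary ingredient of the proof of Theorem~\ref{theorem_proj_b} is already available in relative form.

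The first step is a reduction to a trivial special linear bundle. Put $k=\rank\Tc$ and consider, for opens $U\subseteq X$, the two functors $F(U)=\bigoplus_i A^{*-2i}(U)$ — with an additional summand $A^{*-k+2}(U)$ when $k$ is even — and $G(U)=A^*(SGr(2,\Tc|_U))$, together with the natural transformation $\Phi\colon F\to G$ sending $(a_i)$ to $\sum_i\pi^*a_i\cup e(\Tc_1|_U)^i$, supplemented by the $e(\Tc_2|_U)$-term in the even case, where $\pi$ is the projection of the relative Grassmannian. Both $F$ and $G$ satisfy Mayer--Vietoris — for $G$ one applies Mayer--Vietoris for $A^*$ to the cover $SGr(2,\Tc|_{U_1})\cup SGr(2,\Tc|_{U_2})=SGr(2,\Tc|_{U_1\cup U_2})$ — and $\Phi$ commutes with the connecting maps because these are natural for maps of covers and linear over the cohomology of the total space, while $\Phi$ is natural by functoriality of Euler classes. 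Hence by the five lemma, if $\Phi_U$ is an isomorphism on each member of a finite family of opens that is closed under pairwise intersection, it is an isomorphism on their union. Since $\Tc$ is Zariski-locally trivial and $X$ is quasi-compact, induction on the cardinality of a trivialising cover reduces us to $\Tc$ trivial, and then Lemma~\ref{lem_triv_push} lets us take $\Tc=(\triv_X^k,1)$, so that $SGr(2,\Tc)\cong X\times SGr(2,k)$ with tautological bundles pulled back from the absolute ones.

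The second step is to rerun the induction on $k$ from the proof of Theorem~\ref{theorem_proj_b}, now over the base $X$ rather than a point. For the base case $k=3$ one identifies $\Tc_1(2,\triv_X^3)^\vee$ with $\Tc_E$ for $E=\triv_X^3$, uses that a rank-$2$ special linear bundle is self-dual with the same Euler class, and invokes the relative form of Lemma~\ref{lem_main}, namely Theorem~\ref{thm_main} with $E=\triv_X^3$, to obtain the isomorphism $(1,e(\Tc_1))\colon A^*(X)\oplus A^{*-2}(X)\xrightarrow{\simeq}A^*(SGr(2,\triv_X^3))$. For the inductive step, a fixed codimension-one subspace $W\le\A^{k+1}$ and fixed forms $\mu_1\in(\Lambda^{k+1}\A^{k+1})^0$, $\mu_2\in(\Lambda^kW)^0$ over the prime field give, after base change to $X$, the diagram with the closed embedding $i\colon SGr(2,\triv_X^k)\to SGr(2,\triv_X^{k+1})$, its open complement $Y$, and the $\A^k_X$-bundle $p\colon Y\to SGr(1,\triv_X^k)=(\triv_X^k)^0$. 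The cohomology of $(\triv_X^k)^0$ is provided by Theorem~\ref{thm_main} when $k$ is odd and by Lemma~\ref{lem_E0_noncan}, which is already stated relatively, when $k$ is even; Propositions~\ref{prop_section} and~\ref{prop_composition} hold over an arbitrary base; so the Euler-class identities $e_1(2,k)=i^Ae_1(2,k+1)$, $i_A(e_1(2,k)^l)=e_1(2,k+1)^{l+1}$, $i_A(e_2(2,2n))=r_Ai'_A(1)=0$ and the analysis of the relevant localization sequences carry over word for word with $A^*(X)$ replacing $A^*(pt)$.

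I expect the even step $k=2n$ to be the main obstacle: one must identify $\ker i_A=A^*(X)\cup e_2(2,2n)$ and $\operatorname{Im}j^A=A^*(X)\cup 1$ starting only from the non-canonical basis $\{1,\alpha\}$ of $A^*((\triv_X^{2n})^0)$. Writing $e_2(2,2n)=y\cup\partial_A(\alpha)$ and expanding $\partial_A(\alpha)$ in the inductively known $A^*(X)$-basis of $A^*(SGr(2,\triv_X^{2n}))$, comparison of coefficients still produces a relation $y\cup z=1$, so $y$ is a unit in $A^*(X)$ and the remaining coefficients vanish; from here the argument is purely formal, exactly as over a point. The remaining effort is bookkeeping — verifying the naturality and linearity statements that make the Mayer--Vietoris patching legitimate — and introduces no new algebra.
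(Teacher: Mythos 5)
Your proposal is correct and takes essentially the same route as the paper: a Mayer--Vietoris patching argument reduces to the case of a trivial special linear bundle, and that case is settled by the computation of Theorem~\ref{theorem_proj_b}. Your second step simply makes explicit what the paper leaves implicit, namely that the induction proving Theorem~\ref{theorem_proj_b} (resting on Theorem~\ref{thm_main}, Lemma~\ref{lem_E0_noncan} and Propositions~\ref{prop_section}, \ref{prop_composition}, all of which are already relative) runs verbatim with $A^*(pt)$ replaced by $A^*(X)$.
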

\begin{proof}
The general case is reduced to the case of the trivial bundle $\Tc$ via the usual Mayer-Vietoris arguments. The latter case follows from Theorem~\ref{theorem_proj_b}.
\end{proof}

\section{A splitting principle.}
In this section we assert a splitting principle for $SL$-oriented cohomology theories with inverted stable Hopf map. The principle states that from the viewpoint of such cohomology theories every special linear bundle is a direct sum of rank 2 special linear bundles and at most one trivial linear bundle.

\begin{definition}
For $k_1<k_2<\dots<k_m$ consider the group
$$
P_{k_1,\dots, k_{m-1}}'=
\left(
\begin{array}{cccc}
SL_{k_1} & * & \hdots & *\\
0 & SL_{k_2-k_1} & \hdots & *\\
\vdots & \vdots & \ddots & \vdots\\
0 & 0&\hdots & SL_{k_{m}-k_{m-1}}
\end{array}
\right)
$$
and define a \textit{special linear flag variety} as the quotient
$$
\SF(k_1,\dots,k_m)=SL_{k_m}/P_{k_1,\dots, k_{m-1}}'.
$$
In particular, we are interested in the following varieties:
$$
\SF(2n)=\SF(2,4,...,2n),\, \SF(2n+1)=\SF(2,4,...,2n,2n+1).
$$
These varieties are called \textit{maximal $SL_2$ flag varieties}. Similar to the case of the special linear Grassmannians we denote by $\Tc_i$ the tautological special linear bundles over $\SF(k_1,k_2,\dots,k_m)$ with $\rank \Tc_i=k_i-k_{i-1}$.

\end{definition}

\begin{rem}
The projection
$$
\SF(k_1,k_2,\dots,k_m)=SL_{n}/P_{k_1,\dots, k_m}'\to SL_{n}/P_{k_1,\dots, k_m}=\mathcal{F}(k_1,k_2,\dots,k_m)
$$
yields the following geometrical description of the special linear flag varieties. Consider a vector space $V$ of dimension $k_m$. Then we have
\begin{multline*}
\SF(k_1,k_2,\dots,k_m)=\\
=\left\{(V_1\le\dots\le V_{m-1}\le V,\, \lambda_1,\dots,\lambda_{m-1})\,\left|\, \dim V_j=k_j, \, \lambda_j\in \left(\Lambda^{k_j} V_j\right)^0 \right. \right\}
\end{multline*}

\end{rem}

\begin{definition}
Let $\Tc$ be a rank $n$ special linear bundle over a smooth variety $X$. We define the \textit{relative special linear flag variety} $\SF(k_1,k_2,\dots,k_{m-1},\Tc)$ twisting $X\times \SF(k_1,k_2,\dots,k_{m-1},n)$ with the cocycle given by $\Tc$. This variety is an $\SF(k_1,k_2,\dots,k_{m-1},n)$-bundle over $X$. We denote relative version of the maximal $SL_2$ flag variety by $\SF(\Tc)$.
\end{definition}

\begin{theorem}
\label{thm_split}
Let $\Tc$ be a rank $k$ special linear bundle over a smooth variety $X$. Then $A^*(\SF(2,4,\dots,2n,\Tc))$ is a free module over $A^*(X)$ with the following basis:
\begin{itemize}
\item
$k$ is odd:
$$\left\{\left.e_1^{m_1}e_2^{m_2}\dots e_n^{m_n}\,\right|\, 0\le m_i\le k-2i\right\},$$
\item
$k$ is even:
$$\left\{u_1u_2\dots u_{n} \,\left|\, u_i=
\left[\begin{array}{l}e_i^{m_i},\, 0\le m_i\le k-2i \\  e_{i+1}e_{i+2}\dots e_{n+1} \end{array}\right. \right.\right\},$$
\end{itemize}
where $e_i=e(\Tc_i,\lambda_{\Tc_i})$.
\end{theorem}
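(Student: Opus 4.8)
The plan is to realize $\SF(2,4,\dots,2n,\Tc)$ as the top of a tower of relative special linear Grassmann bundles and to compute its cohomology one stage at a time by Theorem~\ref{theorem_proj_b2}, then to multiply together the bases obtained at each stage. Write $k=\rank\Tc$. First I would observe that the projection remembering only the two-dimensional part $(V_1,\lambda_1)$ of the flag realizes $\SF(2,4,\dots,2n,\Tc)$ as a relative flag variety over $SGr(2,\Tc)$: quotienting the rest of the flag by the tautological rank-$2$ subbundle $\Tc_1$ identifies it with $\SF(2,4,\dots,2(n-1),\mathcal S)$ over $SGr(2,\Tc)$, where $\mathcal S$ is the tautological rank-$(k-2)$ quotient bundle on $SGr(2,\Tc)$. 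The bundle $\mathcal S$ is canonically special linear: by Lemma~\ref{quottriv} the line bundle $\det\mathcal S\cong\det\Tc\otimes(\det\Tc_1)^\vee$ is trivialized by the trivializations of $\det\Tc$ and $\det\Tc_1$. Iterating, $\SF(2,4,\dots,2n,\Tc)$ becomes a tower of relative $SGr(2,-)$-bundles over $X$, where at the $i$-th stage one forms the relative special linear Grassmannian of a canonically special linear bundle of rank $k-2(i-1)$, and where the rank-$2$ subbundle introduced at the $i$-th stage restricts to $\Tc_i$ on the full flag.

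Next I would apply Theorem~\ref{theorem_proj_b2} at each stage. When $k$ is odd every bundle occurring in the tower has odd rank, so part~(2) shows that the $i$-th stage contributes the basis $\{1,e_i,e_i^2,\dots,e_i^{k-2i}\}$ over the previous stage, with $e_i=e(\Tc_i)$ and the exponent bound being exactly the rank of the $i$-th bundle minus $2$. When $k$ is even every bundle occurring in the tower has even rank, so part~(1) contributes $\{1,e_i,\dots,e_i^{k-2i}\}$ together with one further element, the Euler class of the rank-$(k-2i)$ quotient bundle appearing at that stage. Since a basis of a free $B$-module times a basis of $B$ as a free $A$-module is a basis over $A$, composing the stages shows $A^*(\SF(2,4,\dots,2n,\Tc))$ is free over $A^*(X)$ on all products obtained by choosing one element from each stage; it then remains to recognize these products.

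For $k$ odd these products are literally $\{e_1^{m_1}\cdots e_n^{m_n}\mid 0\le m_i\le k-2i\}$, so nothing more is needed. For $k$ even the one point to check is that the extra element at the $i$-th stage, the Euler class of the rank-$(k-2i)$ quotient bundle there, equals the tail $e_{i+1}e_{i+2}\cdots e_{n+1}$ after pullback to the full flag; here $\Tc_{n+1}$ denotes the last tautological bundle, of rank $k-2n$, so $e_{n+1}=1$ when $k$ is even. This I would get from Lemma~\ref{lem_eulermult}: on the full flag the quotient in question is $\Tc/V_i$, which after passing to a suitable affine bundle carries a filtration with successive quotients $\Tc_{i+1},\dots,\Tc_{n+1}$, whence $e(\Tc/V_i)=e_{i+1}\cdots e_{n+1}$. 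Substituting, the product basis becomes exactly $\{u_1u_2\cdots u_n\}$ with $u_i\in\{e_i^{m_i}\mid 0\le m_i\le k-2i\}\cup\{e_{i+1}e_{i+2}\cdots e_{n+1}\}$, as stated.

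The hard part is not any single computation but the bookkeeping: one must verify that the tautological bundles of the successively smaller flag varieties really do restrict to the $\Tc_i$ named in the statement, so that the Euler classes returned by Theorem~\ref{theorem_proj_b2} are the intended ones, and that every quotient bundle encountered along the tower is canonically special linear, so that the relative form of Theorem~\ref{theorem_proj_b2} applies at each stage. Once that is in place, the two parity cases of Theorem~\ref{theorem_proj_b2} produce the exponent ranges $0\le m_i\le k-2i$ automatically, and Lemma~\ref{lem_eulermult} produces the tails, so no further work is required.
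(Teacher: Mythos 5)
Your proposal is correct and follows essentially the same route as the paper: the paper likewise realizes $\SF(2,4,\dots,2n,\Tc)$ as an iterated tower of relative $SGr(2,-)$-bundles (inducting by forgetting the last flag step rather than splitting off the first), applies Theorem~\ref{theorem_proj_b2} at each stage, and uses multiplicativity of Euler classes (Lemma~\ref{lem_eulermult}) to identify the even-case ``tail'' elements, exactly as you do. One small slip in an aside: $e_{n+1}$ is the Euler class of the rank $k-2n$ bundle $\Tc_{n+1}$ and equals $1$ only in the degenerate case $k=2n$, not for all even $k$; this does not affect your argument, since what you actually use is the identification $e(\Tc/V_i)=e_{i+1}\cdots e_{n+1}$, which is correct.
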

\begin{proof}
Proceed by induction on $n$. For $n=1$ the claim follows from Theorem~\ref{theorem_proj_b2}.

Consider the projection
$$
p\colon Y=\SF(2,4,\dots,2n,\Tc)\to \SF(2,4,\dots,2n-2,\Tc)=Y_1
$$
that forgets about the last subspace. Denote the tautological bundles over $Y$ by $\Tc_i$ and the tautological bundles over $Y_1$ by $\Tc_i '$.

\textbf{k is odd.}
 Using an isomorphism $ Y \cong SGr(2,\Tc_n')$ and Theorem~\ref{theorem_proj_b2} we obtain that $A^*(Y)$ is a free module over $A^*(Y_1)$ with the basis
$$
\mathcal{B}=\left\{1,e_n,\dots, e_n^{k-2n}\right\}.
$$
Using the induction we have the following basis for $A^*(Y_1)$:
$$
\mathcal{B}_1=\left\{\left.e_1'^{m_1}e_2'^{m_2}\dots e_{n-1}'^{m_{n-1}}\,\right|\, 0\le m_i\le k-2i\right\},
$$
with $e_i'=e(\Tc_i')$. One has $p^*(\Tc_i')\cong \Tc_i$ and $p^A(e_i')=e_i$ for $i\le n-1$. Computing the pullback for $\mathcal{B}_1$ and multiplying it with $\mathcal{B}$ we obtain the desired basis.

 \textbf{k is even.}
 This case is completely analogous to the previous one.
 We have an isomorphism $ Y \cong SGr(2,\Tc_n')$. By Theorem~\ref{theorem_proj_b2} we know that $A^*(Y)$ is a free module over $A^*(Y_1)$ with the basis
 $$
\mathcal{B}=\left\{u_{n} \,\left|\, u_n=
\left[\begin{array}{l}e_n^{m_n},\, 0\le m_n\le k-2n \\ e_{n+1} \end{array}\right. \right.\right\}.
 $$

Using the induction we have the following basis for $A^*(Y_1)$:
$$\mathcal{B}_1=\left\{u_1u_2\dots u_{n-1} \,\left|\, u_i=
\left[\begin{array}{l}e_i'^{m_i},\, 0\le m_i\le k-2i \\  e_{i+1}'e_{i+2}'\dots e_{n}' \end{array}\right. \right.\right\}$$
with $e_i'=e(\Tc_i')$. Note that $p^*(\Tc_i')\cong \Tc_i$ and $p^A(e_i')=e_i$ for $i\le n-1$. Multiplicativity of the Euler classes yields $p^A(e_n')=e_ne_{n+1}$. Computing the pullback for $\mathcal{B}_1$ and multiplying it with $\mathcal{B}$ we obtain the desired basis of $Y$.
\end{proof}

A straightforward consequence of the splitting principle is the following corollary relating the top characteristic classes of special linear bundles.

\begin{cor}
\label{cor_dual}
Let $\Tc$ be a special linear bundle over a smooth variety $X$ and put $n=[\frac{1}{2}\rank \Tc]$. Then we have
\begin{enumerate}
\item
$
e(\Tc)=e(\Tc^\vee),
$
\item
$
b_{2i+1}(H(\Tc))=0
$ for all $i$,
\item
$p_{i}(\Tc)=0$ for $i>n$,
\item
if $\rank \Tc =2n$ then $p_n(\Tc)=e(\Tc)^2$.
\end{enumerate}
\end{cor}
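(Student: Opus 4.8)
The plan is to deduce all four identities from the splitting principle (Theorem~\ref{thm_split}) by reducing to rank $2$ special linear bundles together with at most one trivial line summand. First I would note that the basis produced by Theorem~\ref{thm_split} contains the element $1$ (take all exponents equal to $0$), so the structure homomorphism $p^A\colon A^*(X)\to A^*(\SF(2,4,\dots,\Tc))$ is a split monomorphism; hence it suffices to verify each of~(1)--(4) after pulling back to the flag variety $Y=\SF(2,4,\dots,\Tc)$ of that theorem. Over $Y$ the bundle $p^*\Tc$ carries a complete flag of tautological special linear subbundles whose successive quotients $\Tc_1,\Tc_2,\dots$ all have rank $2$, except for one rank $1$ quotient when $\rank\Tc$ is odd. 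Exactly as in the proof of Lemma~\ref{lem_eulermult}, after further replacing $Y$ by a suitable $\A^r$-bundle (which does not change cohomology, by homotopy invariance) one may assume $p^*\Tc\cong\Tc_1\oplus\dots\oplus\Tc_n$ if $\rank\Tc=2n$, and $p^*\Tc\cong\Tc_1\oplus\dots\oplus\Tc_n\oplus\triv_Y$ if $\rank\Tc=2n+1$, with each $\Tc_i$ a rank $2$ special linear bundle.

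For~(1): if $\rank\Tc$ is odd then both $e(\Tc)$ and $e(\Tc^\vee)$ vanish, since an odd rank special linear bundle has zero Euler class and $\Tc^\vee$ is again of odd rank. If $\rank\Tc=2n$, then dualization commutes with direct sums, so by Lemma~\ref{lem_eulermult} one has $e(\Tc)=\prod_{i=1}^n e(\Tc_i)$ and $e(\Tc^\vee)=\prod_{i=1}^n e(\Tc_i^\vee)$, and the equality follows from the already established fact that $e(\Tc_i)=e(\Tc_i^\vee)$ for rank $2$ special linear bundles.

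For~(2), (3) and~(4): there is a symplectic isomorphism $H(E_1\oplus E_2)\cong H(E_1)\perp H(E_2)$, so after pullback $H(\Tc)$ is an orthogonal sum of the $H(\Tc_i)$, together with one copy of $H(\triv_Y)$ when $\rank\Tc$ is odd. Property~(5) of the Pontryagin classes theory then gives
\[
p_*(H(\Tc))=\prod_{i=1}^n p_*(H(\Tc_i))\cdot p_*(H(\triv_Y)),
\]
where the last factor occurs only in the odd rank case and equals $1$ there, since $H(\triv_Y)$ is pulled back from a rank $2$ symplectic bundle over $\pt$ and so its $p_1$ vanishes by property~(4). By Lemma~\ref{lemm_borel_cl} each remaining factor equals $p_*(H(\Tc_i))=1-e(\Tc_i)^2t^2$, an even polynomial in $t$, hence so is the product. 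Evenness says precisely that the odd Pontryagin classes $p_{2i+1}(H(\Tc))$ vanish, which — since $b_*(\Tc)=\sum_i p_{2i}(H(\Tc))t^{2i}$ by the definition of the Borel classes — is the content of~(2). As the product has degree at most $2n$ in $t$, one gets $b_i(\Tc)=p_{2i}(H(\Tc))=0$ for $i>n$, which is~(3) (this also follows directly from property~(6), as $\rank H(\Tc)=2\rank\Tc$). Finally, comparing the coefficients of $t^{2n}$ when $\rank\Tc=2n$ yields
\[
b_n(\Tc)=\prod_{i=1}^n\bigl(-e(\Tc_i)^2\bigr)=(-1)^n\Bigl(\prod_{i=1}^n e(\Tc_i)\Bigr)^{\!2}=(-1)^n e(\Tc)^2
\]
by Lemma~\ref{lem_eulermult}, which is~(4).

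The only mildly delicate points are entirely routine: passing to the flag variety and the auxiliary $\A^r$-bundle so that $\Tc$ genuinely splits as a direct sum — which is what lets the multiplicative formulas for $e(-)$ and for $p_*(H(-))$ apply to honest orthogonal sums — and the observation that $1$ is a member of the Theorem~\ref{thm_split} basis, so that $p^A$ is injective and the identities may be checked upstairs. No essential obstacle arises, which is why this is recorded as a corollary.
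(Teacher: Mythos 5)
Your proposal is correct and follows essentially the same route as the paper: pass to the flag variety of Theorem~\ref{thm_split} (plus an auxiliary $\A^r$-bundle as in Lemma~\ref{lem_eulermult}) so that the pullback of $\Tc$ splits into rank $2$ special linear summands, use injectivity of $p^A$, and then combine Lemma~\ref{lemm_borel_cl}, multiplicativity of Euler and total Pontryagin classes, and rank $2$ self-duality to read off all four identities. The only cosmetic differences are your use of the vanishing of Euler classes in odd rank for item (1) and your explicit treatment of the hyperbolic bundle of the trivial summand, neither of which changes the argument.
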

\begin{proof}
Consider $r\colon Y\xrightarrow{r_1}\SF(\Tc)\xrightarrow{r_2} X$ with $r_1$ being an $\A^s$-bundle splitting the $r^*\Tc$ into a sum of special linear vector bundles isomorphic to $r_1^*\Tc_i$. One can construct $Y$ in a similar way as we used in Lemma~\ref{lem_eulermult}. From Theorem~\ref{thm_split} we know that $r^A$ is an injection. We have $r^*\Tc\cong \bigoplus_i r_1^*\Tc_i$ and $r^*\Tc^\vee\cong \bigoplus_i r_1^*\Tc_i^\vee$. Note that $\rank r_1^*\Tc_i \le 2$, hence $r_1^*\Tc_i\cong r_1^*\Tc_i^\vee$ and we obtain $r^*\Tc\cong r^*\Tc^\vee$, so $r^Ae(\Tc)=r^Ae(\Tc^\vee)$ and $e(\Tc)=e(\Tc^\vee)$. 

By Lemma~\ref{lemm_borel_cl} and multiplicativity of total Borel classes we have
$$
b_*(r^*H(\Tc))=b_*(H(r^*\Tc))=\prod_{i=1}^n (1-e(r^*_1\Tc_i)^2t^2),
$$
thus the odd Borel classes vanish and $p_i=0$ for $i>n$. Moreover, for a special linear bundle of the even rank this equality yields
\[
r^Ap_{n}(\Tc)=(-1)^nr^Ab_{2n}(\Tc)=(-1)^{2n}\prod_{i=1}^n e(r_1^*\Tc_i)^2=(r^Ae(\Tc))^2. \qedhere
\]
\end{proof}

Another consequence of the splitting principle is multiplicativity of total Pontryagin classes.
\begin{lemma}
\label{lem_BorelCartan}
Let $\Tc$ be a special linear bundle over a smooth variety $X$ and let $\Tc_1\le \Tc$ be a special linear subbundle. Then $p_*(\Tc)=p_*(\Tc_1)p_*(\Tc/\Tc_1)$.
\end{lemma}
\begin{proof}
Considering the $A^r$-bundle $p\colon Y\to X$ described in Lemma~\ref{lem_eulermult} one may assume that $\Tc\cong \Tc_1\oplus \Tc/\Tc_1$. The claim of the lemma follows from the second item of the above corollary and multiplicativity of total Borel classes.
\end{proof}

We finish this section with the theorem claiming that every special linear bundle of even rank is cohomologically symplectic in the following precise sense.

\begin{theorem}
Let $\Tc=(E,\lambda)$ be a special linear bundle of even rank over a smooth variety $X$. Then there exists a morphism of smooth varieties $p\colon Y\to X$ such that $A^*(Y)$ is a free $A^*(X)$-module (via $p^A$) and $p^*E$ has a canonical symplectic form $\phi$ compatible with trivialization $p^*\lambda$.
\end{theorem}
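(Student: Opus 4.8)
The plan is to deduce the theorem directly from the splitting principle (Theorem~\ref{thm_split}) together with the elementary fact that every rank~$2$ special linear bundle is canonically a symplectic bundle.

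First I would set $\rank\Tc=2n$ and take $Y$ to be the total space of the tower of affine bundles over the relative maximal $SL_2$ flag variety $\SF(\Tc)$ constructed, as in Lemma~\ref{lem_eulermult}, so as to split the tautological filtration of the pullback of $E$; this is the same variety used in the proof of Corollary~\ref{cor_dual}. Writing $p\colon Y\to X$ for the composite and $\Tc_1,\dots,\Tc_n$ for the rank~$2$ tautological special linear bundles on $\SF(\Tc)$ pulled back to $Y$, one then has an isomorphism of special linear bundles $p^*\Tc\cong\Tc_1\oplus\dots\oplus\Tc_n$. Injectivity of $p^A$ is immediate: the flag-bundle projection $\SF(\Tc)\to X$ is split injective on $A^*(-)$ by Theorem~\ref{thm_split} (its basis contains~$1$), and the affine-bundle part is an isomorphism by homotopy invariance.

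Next I would produce the symplectic form. For a rank~$2$ special linear bundle $(F,\mu)$ the trivialization $\mu\colon\Lambda^2F\xrightarrow{\simeq}\triv$ is exactly an alternating nondegenerate bilinear form $\phi_\mu$ on $F$, and the special linear structure that $\phi_\mu$ canonically induces is again $\mu$ (this is the local computation underlying the lemma that $\Tc\cong\Tc^\vee$ for rank~$2$ bundles). Let $\phi_i$ be the symplectic form on $\Tc_i$ obtained in this way; I would then define $\phi$ on $p^*E\cong\Tc_1\oplus\dots\oplus\Tc_n$ to be the orthogonal direct sum $\phi_1\perp\dots\perp\phi_n$ and transport it back along the chosen isomorphism.

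The only thing requiring a genuine computation --- and the point I would be most careful about --- is the compatibility of $\phi$ with $p^*\lambda$, i.e.\ that the special linear structure canonically attached to $\phi$ equals $p^*\lambda$. This reduces to the linear-algebra fact that the Pfaffian of an orthogonal direct sum of symplectic forms is the product of the Pfaffians of the summands: each $\phi_i$ contributes the trivialization of $\det\Tc_i$ recorded by $\Tc_i$, and under the canonical isomorphism $\det(\Tc_1\oplus\dots\oplus\Tc_n)\cong\det\Tc_1\otimes\dots\otimes\det\Tc_n$ of Lemma~\ref{quottriv} their product is precisely the tensor product of those trivializations, which is by definition the determinant trivialization of a direct sum of special linear bundles and hence corresponds to $p^*\lambda$. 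Everything else in the argument is formal, the real content being the splitting principle itself.
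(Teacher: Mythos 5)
Your proposal is correct and follows essentially the same route as the paper: take the affine-bundle tower $Y$ over $\SF(\Tc)$ from Corollary~\ref{cor_dual} so that $p^*\Tc\cong\Tc_1\oplus\dots\oplus\Tc_n$, get injectivity of $p^A$ from Theorem~\ref{thm_split}, and equip $p^*E$ with the orthogonal sum of the canonical rank~$2$ symplectic forms $\phi_i$, compatible with $p^*\lambda=\lambda_1\otimes\dots\otimes\lambda_n$. Your extra care about the compatibility (the Pfaffian of an orthogonal sum being the product of Pfaffians) simply spells out the step the paper states without elaboration.
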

\begin{proof}
Consider the same morphism $p\colon Y\to X$ as we used in the proof of Corollary~\ref{cor_dual}, i.e. $Y$ is an $\A^r$-bundle over $\SF(\Tc)$ such that 
\[
p^*\Tc\cong \bigoplus \Tc_i,
\]
where $\Tc_i$ are special linear bundles of rank two. Theorem~\ref{thm_split} yields $A^*(Y)$ is a free $A^*(X)$-module. The special linear bundles $\Tc_i=(E,\lambda_i)$ have canonical symplectic forms $\phi_i$ induced by trivializations $\lambda_i$. Hence $E\cong \bigoplus E_i$ has a symplectic form $\phi=\phi_1\perp\phi_2\perp\ldots\perp\phi_n$ and it is compatible with $\lambda=\lambda_1\otimes\lambda_2\otimes\ldots\otimes\lambda_n$.
\end{proof}

\section{Cohomology of the partial flags.}
Now we turn to the computation of the relations that Pontryagin and Euler classes of the tautological bundles satisfy.

\begin{theorem}
\label{thm_SGr(2,Tc)}
Let $\Tc$ be a special linear bundle over a smooth variety $X$. Put $e=e(\Tc), p_i=p_i(\Tc)$.
\begin{enumerate}
\item
If $\rank \Tc = 2n$ then
$$
\phi_1\colon\bigslant{A^*(X)[e_1,e_2]}{R_{2,2n}} \xrightarrow{} A^*(SGr(2,\Tc)),
$$
where 
\[
R_{2,2n}=\big(e_1e_2-e, e_2^2+\sum_{i=0}^{n-1} (-1)^{i-1}p_{n-i-1}e_1^{2i} \big)
\]
and the homomorphism is induced by $\phi_1(e_1)=e(\Tc_1),\phi_1(e_2)=e(\Tc_2)$, is an isomorphism of $A^*(X)$-algebras.
\item
If $\rank \Tc = 2n+1$ then
$$
\phi_2\colon\bigslant{A^*(X)[e_1]}{\big( \sum_{i=0}^n (-1)^ip_{n-i}e_1^{2i} \big)} \xrightarrow{} A^*(SGr(2,\Tc)),
$$
induced by $\phi_2(e_1)=e(\Tc_1)$, is an isomorphism of $A^*(X)$-algebras.
\end{enumerate}
\end{theorem}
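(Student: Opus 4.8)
The plan is to exploit the tautological filtration on $Y:=SGr(2,\Tc)$. Let $\pi\colon Y\to X$ be the projection; then $\Tc_1$ is a special linear subbundle of $\pi^*\Tc$ with $\pi^*\Tc/\Tc_1\cong\Tc_2$, and by Theorem~\ref{theorem_proj_b2} the map $\pi^A$ is a split injection, so I identify $A^*(X)$ with its image in $A^*(Y)$ and abbreviate $\pi^Ab_i(\Tc)$ to $b_i$ and $\pi^Ae(\Tc)$ to $e$. Writing $e_1=e(\Tc_1)$, $e_2=e(\Tc_2)$, Lemma~\ref{lem_eulermult} and Lemma~\ref{lem_BorelCartan} give
$$
e=e_1e_2,\qquad b_*(\Tc)=b_*(\Tc_1)\,b_*(\Tc_2)=(1-e_1^2t^2)\,b_*(\Tc_2)
$$
in $A^*(Y)$, the last equality by Lemma~\ref{lemm_borel_cl} since $\rank\Tc_1=2$.

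Next I would extract the defining relations. Comparing coefficients of $t^{2j}$ in $b_*(\Tc)=(1-e_1^2t^2)b_*(\Tc_2)$ gives $b_j(\Tc)=b_j(\Tc_2)-e_1^2b_{j-1}(\Tc_2)$, hence inductively $b_j(\Tc_2)=\sum_{i=0}^{j}b_{j-i}e_1^{2i}$ with $b_0=1$. If $\rank\Tc=2n$ then $\rank\Tc_2=2(n-1)$, and the case $j=n-1$ combined with Corollary~\ref{cor_dual}(4) for $\Tc_2$ yields $\sum_{i=0}^{n-1}b_{n-i-1}e_1^{2i}=b_{n-1}(\Tc_2)=(-1)^{n-1}e_2^2$; together with $e=e_1e_2$ this shows that both generators of $R_{2,2n}$,
$$
e_1e_2-e,\qquad (-1)^ne_2^2+\sum_{i=0}^{n-1}b_{n-i-1}e_1^{2i},
$$
map to $0$ in $A^*(Y)$. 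If $\rank\Tc=2n+1$ then $\rank\Tc_2=2n-1$, so $b_n(\Tc_2)=0$ by Corollary~\ref{cor_dual}(3), which is exactly $\sum_{i=0}^{n}b_{n-i}e_1^{2i}=0$ in $A^*(Y)$. Hence $\phi_1$ and $\phi_2$ are well-defined homomorphisms of $A^*(X)$-algebras, and they are surjective because by Theorem~\ref{theorem_proj_b2} the images $1,e_1,\dots,e_1^{2n-2},e_2$ (resp. $1,\dots,e_1^{2n-1}$) already generate $A^*(Y)$ over $A^*(X)$.

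It remains to prove injectivity, and the core step is to show that the source ring is generated as an $A^*(X)$-module by exactly the $2n$ elements of Theorem~\ref{theorem_proj_b2}. In the odd case this is immediate: the relation is monic of degree $2n$ in $e_1$, so $A^*(X)[e_1]/\big(\sum_i b_{n-i}e_1^{2i}\big)$ is spanned by $1,e_1,\dots,e_1^{2n-1}$. In the even case one uses $e_1e_2=e\in A^*(X)$ to eliminate every monomial $e_1^ae_2$ with $a\ge1$, the relation $e_2^2=(-1)^{n+1}\sum_{i=0}^{n-1}b_{n-i-1}e_1^{2i}$ to lower the $e_2$-degree below $2$, and the derived identity $e_1^{2n-1}=(-1)^{n+1}ee_2-\sum_{i=0}^{n-2}b_{n-i-1}e_1^{2i+1}$ (obtained by multiplying the latter relation by $e_1$ and reusing $e_1e_2=e$) to reduce, by a straightforward induction on the $e_1$-degree, any monomial into the span of $1,e_1,\dots,e_1^{2n-2},e_2$. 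Granting this, form the composite
$$
A^*(X)^{\oplus2n}\twoheadrightarrow \bigslant{A^*(X)[e_1,e_2]}{R_{2,2n}}\xrightarrow{\ \phi_1\ }A^*(SGr(2,\Tc)),
$$
where the first arrow sends a chosen basis onto those $2n$ generators. By Theorem~\ref{theorem_proj_b2} the composite is an isomorphism; since the left arrow is surjective and the composite injective, the left arrow is injective, hence bijective, and therefore $\phi_1$ is bijective. The odd case is identical. Naturality of the isomorphisms is clear from the functoriality of the Euler and Borel classes and of the construction $\Tc\mapsto SGr(2,\Tc)$.

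The main obstacle is the bookkeeping in the middle step: pinning down the exact coefficients of $b_*(\Tc_2)$ in terms of $b_i$ and $e_1$ and recognizing them inside $R_{2,2n}$, together with verifying that the reduction in the even case genuinely terminates on the claimed $A^*(X)$-module generators. Once this is done, the passage from a set of module generators to a ring isomorphism is formal, given the module computation of Theorem~\ref{theorem_proj_b2}.
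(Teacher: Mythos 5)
Your proposal is correct and follows essentially the same route as the paper: verify the relations $e_1e_2=e$ and the degree-$2n$ relation by expanding $b_*(p^*\Tc)=b_*(\Tc_1)b_*(\Tc_2)$ with $b_*(\Tc_1)=1-e_1^2t^2$ and invoking Corollary~\ref{cor_dual}, then conclude via the free-module basis of Theorem~\ref{theorem_proj_b2}. The only difference is that you spell out explicitly the monomial reduction showing the source is spanned by $1,e_1,\dots,e_1^{2n-2},e_2$ (resp. $1,\dots,e_1^{2n-1}$), a step the paper leaves implicit in the phrase ``$\phi_1$ and $\phi_2$ are supposed to map the bases to the bases.''
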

\begin{proof}
In view of the Theorem~\ref{theorem_proj_b2} it is sufficient to show that the claimed relations hold, since $\phi_1$ and $\phi_2$ are supposed to map the bases to the bases. We have an isomorphism $p^*\Tc/\Tc_1\cong \Tc_2$ for the natural projection $p\colon SGr(2,\Tc)\to X$, so the relation $e(\Tc_1)e(\Tc_2)=e$ follows from the multiplicativity of the Euler class. In order to obtain the others relation compute the total Pontryagin class
\[
p_*(p^A\Tc)=p_*(\Tc_1)p_*(\Tc_2).
\]
Dividing by $p_*(\Tc_1)=1+p_1(\Tc_1)t^2$ in $A^{*}(SGr(2,\Tc))[[t]]$ we get
\[
1+(p_1-p_1(\Tc_1))t^2+\ldots+ \left(\sum_{j=0}^{i} (-1)^{j}p_{i-j}p_1(\Tc_1)^{j}\right) t^{2i}+\ldots =
\sum p_i(\Tc_2)t^{2i}.
\]
In case of $\rank \Tc=2n$ recall that by Corollary~\ref{cor_dual} 
\[
p_1(\Tc_1)=e(\Tc_1)^2, \quad p_{n-1}(\Tc_2)=e(\Tc_2)^2
\]
and compare the coefficients at $t^{2n-2}$. In the other case by the same Corollary~\ref{cor_dual} we have $p_1(\Tc_1)=e(\Tc_1)^2$ and $p_n(\Tc_2)=0$, so the claim follows from the comparison of the coefficients at $t^{2n}$.
\end{proof}

\begin{cor}
\label{cor_SGr(2,k)}
Homomorphisms of $A^*(pt)$-algebras
\begin{enumerate}
\item
$\phi_1\colon\bigslant{A^*(pt)[e_1,e_2]}{\big( e_1e_2,\,e_1^{2n-2}+(-1)^ne_2^2\big)} \xrightarrow{\simeq} A^*(SGr(2,2n))$,
\item
$\phi_2\colon\bigslant{A^*(pt)[e_1]}{\big( e_1^{2n}\big)} \xrightarrow{\simeq} A^*(SGr(2,2n+1))$,
\end{enumerate}
induced by $\phi_1(e_1)=e(\Tc_1)$, $\phi_1(e_2)=e(\Tc_2)$ and $\phi_2(e_1)=e(\Tc_1)$ are isomorphism.
\end{cor}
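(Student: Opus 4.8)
The plan is to read off both isomorphisms from Theorem~\ref{thm_SGr(2,Tc)} by specializing to $X=pt$ and $\Tc$ a trivialized special linear bundle. Taking $\Tc=(\triv_{pt}^{2n},1)$ in the even case and $\Tc=(\triv_{pt}^{2n+1},1)$ in the odd case identifies the relative Grassmannian $SGr(2,\Tc)$ with $SGr(2,2n)$ resp. $SGr(2,2n+1)$, and the maps $\phi_1,\phi_2$ of the corollary are exactly the maps of Theorem~\ref{thm_SGr(2,Tc)} for this $\Tc$, so naturality over $A^*(pt)$ is inherited. Thus the whole content is to evaluate the characteristic classes $b_i=b_i(\Tc)$ and $e=e(\Tc)$ appearing in the ideals $R_{2,2n}$ and $\big(\sum_{i=0}^{n}b_{n-i}e_1^{2i}\big)$ on a trivialized bundle over a point.

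First I would check that these classes vanish in positive degree. Since $\triv_{pt}^{k}$ carries a nowhere vanishing section, Lemma~\ref{lemm_triv_section} gives $e(\triv_{pt}^{2n},1)=0$. For the Borel classes, $H(\triv_{pt}^{k})=\triv_{pt}^{k}\oplus(\triv_{pt}^{k})^{\vee}$ with the hyperbolic form is a trivial symplectic bundle over a point, hence an orthogonal sum of $k$ copies of the hyperbolic plane $H(\triv_{pt})$; by the Whitney-sum axiom for Pontryagin classes together with the vanishing $p_1=0$ for a rank~$2$ symplectic bundle over $pt$, all Pontryagin classes of $H(\triv_{pt}^{k})$ in positive degree vanish, so $b_0(\Tc)=1$ and $b_i(\Tc)=0$ for $i\ge 1$. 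For the even-rank case one may alternatively combine Lemma~\ref{lemm_borel_cl}, which gives $b_*(\triv_{pt}^2,1)=1-e(\triv_{pt}^2,1)^2t^2=1$, with the multiplicativity of total Borel classes in Lemma~\ref{lem_BorelCartan}.

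Finally I would substitute and simplify. For $\rank\Tc=2n$ the generator $e_1e_2-e$ of $R_{2,2n}$ becomes $e_1e_2$, and in $(-1)^ne_2^2+\sum_{i=0}^{n-1}b_{n-i-1}e_1^{2i}$ the only nonzero summand is the one with $n-i-1=0$, namely $b_0e_1^{2n-2}=e_1^{2n-2}$, so $R_{2,2n}$ collapses to $\big(e_1e_2,\,e_1^{2n-2}+(-1)^ne_2^2\big)$, giving the first presentation. For $\rank\Tc=2n+1$ the relation $\sum_{i=0}^{n}b_{n-i}e_1^{2i}$ collapses to $b_0e_1^{2n}=e_1^{2n}$, giving the second presentation.

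I do not expect a real obstacle: the corollary is a direct specialization of Theorem~\ref{thm_SGr(2,Tc)}, and the only points needing care are the short verification that the Euler and Borel classes of a trivialized bundle over a point vanish in positive degree and the index bookkeeping isolating the single surviving term in each sum.
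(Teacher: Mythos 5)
Your proposal is correct and follows exactly the paper's route: the paper likewise deduces the corollary from Theorem~\ref{thm_SGr(2,Tc)} by noting that the characteristic classes of the trivial bundle vanish. Your additional verification of the vanishing of $e$ and $b_i$ and the simplification of the ideals simply makes explicit what the paper leaves to the reader.
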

\begin{proof}
The characteristic classes of the trivial bundle vanish, so the claim follows from the above theorem.
\end{proof}

In order to write down the relations for the cohomology of the special linear flag varieties we need to perform certain computations involving symmetric polynomials. Put $h_i(x_1,x_2,\ldots,x_n)$ for the $i$-th complete symmetric polynomial in $n$ variables. 

\begin{lemma}
\label{lem_borelcompletepoly}
Let $\triv_{X}^n$ be a trivialized special linear bundle over a smooth variety $X$. Suppose that there is an isomorphism of special linear bundles $(\triv_{X}^n,1) \cong  (\oplus_{i=1}^k \Tc_i) \oplus \Tc'$ for the special linear bundles $\Tc_i$ of rank $2$. Then
\[
p_i(\Tc')=(-1)^ih_i(e(\Tc_1)^2,e(\Tc_2)^2,\ldots,e(\Tc_k)^2).
\]
\end{lemma}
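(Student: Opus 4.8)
The plan is to run exactly the same kind of argument as in the proof of Corollary~\ref{cor_dual}, but note that here no splitting principle is needed: the decomposition $(\triv_X^n,1)\cong(\oplus_{i=1}^k\Tc_i)\oplus\Tc'$ is already given over $X$ itself. So I would work directly in the power series ring $A^*(X)[[t]]$ with total Borel classes, use multiplicativity (Lemma~\ref{lem_BorelCartan}), the vanishing of the Borel classes of a trivial bundle, the rank~$2$ computation (Lemma~\ref{lemm_borel_cl}), and finally the classical generating-function identity for the complete homogeneous symmetric polynomials.

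\textbf{Main steps.} First, apply Lemma~\ref{lem_BorelCartan} repeatedly (viewing each $\Tc_i$ as a subbundle with the appropriate special linear quotient) to get
\[
b_*(\triv_X^n,1)=\Big(\prod_{i=1}^k b_*(\Tc_i)\Big)\,b_*(\Tc')
\]
in $A^*(X)[[t]]$. Since the characteristic classes of a trivial special linear bundle vanish, the left-hand side equals $1$. Each factor $b_*(\Tc_i)$ has constant term $1$, hence is invertible in $A^*(X)[[t]]$, so
\[
b_*(\Tc')=\prod_{i=1}^k b_*(\Tc_i)^{-1}.
\]
Next, by Lemma~\ref{lemm_borel_cl} each $\Tc_i$ has rank $2$ and satisfies $b_*(\Tc_i)=1-e(\Tc_i)^2t^2$, so
\[
b_*(\Tc')=\prod_{i=1}^k\frac{1}{1-e(\Tc_i)^2t^2}.
\]
Finally, I would invoke the identity $\prod_{i=1}^k(1-x_iu)^{-1}=\sum_{j\ge 0}h_j(x_1,\dots,x_k)u^j$ in $\Z[x_1,\dots,x_k][[u]]$, specialize $u=t^2$ and $x_i=e(\Tc_i)^2$, and compare the coefficients of $t^{2i}$ on both sides, which yields $b_i(\Tc')=h_i(e(\Tc_1)^2,\dots,e(\Tc_k)^2)$.

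\textbf{Expected obstacle.} There is essentially no hard point: the only things to be a little careful about are that the total Borel class genuinely lives in $A^*(X)[[t]]$ and is multiplicative for direct sums of special linear subbundle/quotient pairs (already available as Lemma~\ref{lem_BorelCartan}), that a trivial special linear bundle has vanishing Borel classes (used already in Corollary~\ref{cor_SGr(2,k)}), and that the factors $1-e(\Tc_i)^2t^2$ may legitimately be inverted as formal power series in $t$. Granting these, the statement reduces to the standard symmetric-function generating-function identity, which is the only genuine computational input and is entirely routine.
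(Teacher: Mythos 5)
Your proposal is correct and follows essentially the same route as the paper: multiplicativity of the total Borel class (Lemma~\ref{lem_BorelCartan}) together with $b_*(\Tc_i)=1-e(\Tc_i)^2t^2$ from Lemma~\ref{lemm_borel_cl} gives $\bigl(\prod_{i=1}^k(1-e(\Tc_i)^2t^2)\bigr)b_*(\Tc')=b_*(\triv_X^n)=1$, and inverting the factors in $A^*(X)[[t]]$ and comparing coefficients yields the complete symmetric polynomials. The paper's proof is exactly this computation, so there is nothing to add.
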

\begin{proof}
Using the multiplicativity of total Pontryagin classes and Lemma~\ref{lemm_borel_cl} we obtain
\[
(\prod_{i=1}^k(1+e(\Tc_i)^2t^2))p_*(\Tc')=p_*(\triv_{X}^n)=1.
\]
The claim follows from the comparison of the coefficients at $t^{2i}$ in the series obtained inverting $(1+e(\Tc_i)^2t^2)$ in $A^*(X)[[t]]$.
\begin{multline*}
1+p_1(\Tc')t^2+p_2(\Tc')t^4+\ldots = \prod_{i=1}^{k} (1-e(\Tc_i)^2t^2+e(\Tc_i)^4t^4-\ldots)=\\
=1-h_1(e(\Tc_1)^2,\ldots,e(\Tc_k)^2)t^2+h_2(e(\Tc_1)^2,\ldots,e(\Tc_k)^2)t^4-\ldots 
\end{multline*}
\end{proof}

\begin{proposition}
\label{prop_partialFlag}
We have the following isomorphisms of $A^*(pt)$-algebras.
\begin{enumerate}
\item
$
\phi_1\colon\bigslant{A^*(pt)[e_1,\ldots,e_m,e'_m]}{I_{2m,2n}} \xrightarrow{\simeq} A^*(\SF(2,4,\ldots,2m,2n)),
$\\
where
\begin{align*}
I_{2m,2n}=\big(& e_1e_2\ldots e_me_m', (-1)^{n}e_2^2\ldots e_m^2e_m'^2+h_{n-1}(e_1^2), \\
&(-1)^{n-1}e_3^2\ldots e_m^2e_m'^2+h_{n-2}(e_1^2,e_2^2),\ldots,\\
 &(-1)^{n-m+1}e_m'^2+ h_{n-m}(e_1^2,e_2^2,\ldots,e_m^2)\big)
\end{align*}
and the isomorphism is induced by $\phi_1(e_i)=e(\Tc_i)$, $\phi_1(e_m')=e(\Tc_{m+1})$.
\item
$
\phi_2\colon\bigslant{A^*(pt)[e_1,\ldots,e_m]}{I_{2m,2n+1}} \xrightarrow{\simeq} A^*(\SF(2,4,\ldots,2m,2n+1)),
$\\
where
\[
I_{2m,2n+1}=\big(h_n(e_1^2), h_{n-1}(e_1^2,e_2^2),\ldots,h_{n-m+1}(e_1^2,e_2^2,\ldots,e_m^2) \big)
\]
and the isomorphism is induced by $\phi_2(e_i)=e(\Tc_i)$.
\end{enumerate}
\end{proposition}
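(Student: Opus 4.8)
The plan is to induct on $m$, peeling off the top of the flag. Forgetting the largest subspace gives a projection
$$
p\colon Y=\SF(2,4,\dots,2m,N)\longrightarrow Y_1=\SF(2,4,\dots,2(m-1),N),\qquad N\in\{2n,2n+1\},
$$
and, exactly as in the proof of Theorem~\ref{thm_split}, $Y$ is canonically the relative special linear Grassmannian $SGr(2,\Tc')$ over $Y_1$, where $\Tc'$ is the last tautological bundle on $Y_1$, of rank $N-2(m-1)$; moreover under the inductive presentation of $A^*(Y_1)$ its Euler class is the last Euler generator ($e(\Tc')=e'_{m-1}$ in the even case, and $e(\Tc')=0$ for odd $N$ by the vanishing of Euler classes of odd-rank special linear bundles, the corollary following Theorem~\ref{thm_main}). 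The base case $m=1$ is Corollary~\ref{cor_SGr(2,k)}, once one notes that in a single variable $h_k(e_1^2)=e_1^{2k}$, so that $I_{2,2n}=(e_1e_1',\,(-1)^ne_1'^2+e_1^{2n-2})$ and $I_{2,2n+1}=(e_1^{2n})$.

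For the inductive step I first read off the characteristic classes of $\Tc'$ on $Y_1$. The universal subbundle $V_{m-1}\le\triv^N_{Y_1}$ is a special linear subbundle with quotient $\Tc'$ and carries its own filtration with rank-two quotients $\Tc_1,\dots,\Tc_{m-1}$, so iterating Lemma~\ref{lem_BorelCartan} and using $b_*(\Tc_i)=1-e_i^2t^2$ (Lemma~\ref{lemm_borel_cl}) together with $b_*(\triv^N)=1$ gives $b_j(\Tc')=h_j(e_1^2,\dots,e_{m-1}^2)$ — this is Lemma~\ref{lem_borelcompletepoly}, applied after pulling back to an $\A^r$-bundle over $Y_1$ on which the filtration splits and $p^A$ is injective. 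Now apply Theorem~\ref{thm_SGr(2,Tc)} to $SGr(2,\Tc')$ with $n''=n-m+1$: in the odd case it adjoins the single relation $\sum_{i=0}^{n''}b_{n''-i}(\Tc')e_m^{2i}=0$, and in the even case the two relations $e_me_m'=e(\Tc')$ and $(-1)^{n''}e_m'^2+\sum_{i=0}^{n''-1}b_{n''-1-i}(\Tc')e_m^{2i}=0$.

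It then remains to match these against the ideals $I_{2m,N}$, and the key tool is the recursion $h_k(x_1,\dots,x_m)=\sum_{i=0}^{k}h_{k-i}(x_1,\dots,x_{m-1})\,x_m^{i}$. Substituting $x_m=e_m^2$ and $b_j(\Tc')=h_j(e_1^2,\dots,e_{m-1}^2)$ turns the new odd relation into $h_{n-m+1}(e_1^2,\dots,e_m^2)=0$, which with the inductive $I_{2(m-1),2n+1}$ is exactly $I_{2m,2n+1}$. In the even case the relation $e_me_m'=e(\Tc')=e'_{m-1}$ lets one eliminate the old generator $e'_{m-1}$; the $m$ relations of $I_{2(m-1),2n}$ then become the first $m$ relations of $I_{2m,2n}$ (using $e_{j+1}^2\cdots e_{m-1}^2(e_me_m')^2=e_{j+1}^2\cdots e_m^2e_m'^2$), and the new Borel relation, rewritten by the recursion, is the last generator $(-1)^{n-m+1}e_m'^2+h_{n-m}(e_1^2,\dots,e_m^2)$ of $I_{2m,2n}$. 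Combined with the inductive presentation of $A^*(Y_1)$ this yields $A^*(Y)\cong A^*(pt)[e_\bullet]/I_{2m,N}$ as rings.

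Finally, $\phi_1,\phi_2$ are well-defined (the listed relations hold, as above) and surjective (their images contain all Euler classes of the tautological bundles, which generate $A^*(Y)$ by Theorem~\ref{theorem_proj_b2}); injectivity follows because at each stage $A^*(Y)$ is $A^*(Y_1)$-free on the explicit basis of Theorem~\ref{theorem_proj_b2}, hence inductively $A^*(Y)$ is $A^*(pt)$-free on the corresponding product basis, which is precisely the basis of Theorem~\ref{thm_split} for $\Tc=\triv^N$, and $\phi_i$ carries the obvious monomial basis of its source onto it. I expect the only real difficulty to be organisational: keeping track of the two-relation presentation in the even case and checking the combinatorial match of the ideals via the $h_k$ recursion; there is no conceptual obstacle once Theorems~\ref{thm_SGr(2,Tc)} and \ref{thm_split} and Lemma~\ref{lem_borelcompletepoly} are in hand.
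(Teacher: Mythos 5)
Your proposal is correct and follows essentially the same route as the paper: induction on $m$ realizing the flag variety as an iterated $SGr(2,\Tc)$-bundle, with the new relation supplied by Theorem~\ref{thm_SGr(2,Tc)}, the Borel classes of the complementary tautological bundle computed via Lemma~\ref{lem_borelcompletepoly}, and the ideals matched through the recursion $h_k(x_1,\dots,x_l)=\sum_i h_i(x_1,\dots,x_{l-1})x_l^{k-i}$. The only difference is that you write out the even case (elimination of the old generator via $e'_{m-1}=e_me'_m$), which the paper declares analogous and omits.
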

\begin{proof} The detailed proof would be quite messy, so we present the reasoning only for item (2). The even case is quite the same, but the formulas are a bit more complicated.

The considered special linear flag varieties are iterated $SGr(2,k)$-bundles, so one may proceed by induction on $m$. The case of $m=1$ was dealt with in Corollary~\ref{cor_SGr(2,k)}. Put $\SF_m=\SF(2,4,\ldots,2m,2n+1)$ and write $\Tc_{i,m}$ for the $i$-th tautological special linear bundle over $\SF_m$. As usual, taking an $\mathbb{A}^r$-bundle $Y\to \SF_m$ allows us to assume that $\triv^{2n+1}_{\SF_m}$ splits into a sum $(\oplus_{i=1}^m \Tc_{i,m})\oplus \Tc_{m+1,m}$. Put 
\[
\Tc_{j,m}'=\bigoplus\limits_{i=j+1}^{m+1} \Tc_{i,m}
\] 
and shorten the notation for the Euler classes via $e_{i,m}=e(\Tc_{i,m}), e'_{i,m}=e(\Tc_{i,m}')$.

There is a natural isomorphism $\SF_{m+1}\cong SGr(2,\Tc_{m+1,m})$. By Theorem~\ref{thm_SGr(2,Tc)} there is an isomorphism
\[
\bigslant{A^*(\SF_m)[e_{m+1,m+1}]}{\left(\sum\limits_{i=0}^{n-m}(-1)^ip_{n-m-i}(\Tc_{m,m}')e_{m+1,m+1}^{2i}\right)}\cong A^*(\SF_{m+1}).
\]
The induction assumption provides the description for the coefficients of this polynomial algebra that is
\[
A^*(\SF_m)\cong \bigslant{A^*(pt)[e_{1,m},\ldots,e_{m,m}]}{I_{2m,2n+1}}.
\]
Note that $e_{i,m}=e_{i,m+1}$ for $i\le m$. Thus it is sufficient to show that 
\[
\sum_{i=0}^{n-m}(-1)p^i_{n-m-i}(\Tc_{m,m}')e_{m+1,m+1}^{2i}=(-1)^{n-m}h_{n-m}(e_{1,m+1}^2,e_{2,m+1}^2,\ldots,e_{m+1,m+1}^2).
\]
Applying Lemma~\ref{lem_borelcompletepoly} we obtain 
\[
p_{n-m-i}(\Tc_{m,m}')=(-1)^{n-m-i}h_{n-m-i}(e_{1,m}^2,e_{2,m}^2,\ldots,e_{m,m}^2).
\]
The claim follows from the well-known identity 
\[
h_k(x_1,\ldots,x_l)=\sum_{i=0}^{k}h_i(x_1,\ldots,x_{l-1})x_l^{k-i}. \qedhere
\]
\end{proof}
\begin{rem}
The relations in the proposition arise from the comparison of the different descriptions for the top Pontryagin class of $\Tc_{i,m}'$:
\[
(e_{i+1}\ldots e_{m+1})^2=e(\Tc_{i,m}')^2=p_{n-i}(\Tc_{i}')=(-1)^{n-i}h_{n-i}(e_1^2,\ldots, e_i^2).
\]
In the even case there is another one relation expressing triviality of the Euler class of the bundle $(\oplus_{i=1}^m \Tc_i)\oplus \Tc_{m+1}$.
\end{rem}

There is another description for the ideals $I_{2m,2n}$ and $I_{2m,2n+1}$.
\begin{lemma} 
\label{lem_partialRel}
For the above ideals $I_{2m,2n}$ and $I_{2m,2n+1}$ we have
\begin{align*}
I_{2m,2n}=\big(&e_1e_2\ldots e_me_m', (-1)^{n-m+1}e_m'^2+h_{n-m}(e_1^2,e_2^2,\ldots,e_m^2), \\
 & h_{n-m+1}(e_1^2,\ldots,e_m^2), h_{n-m+2}(e_1^2,\ldots,e_m^2),\ldots, h_{n-1}(e_1^2,\ldots,e_m^2) \big),\\
I_{2m,2n+1}=\big(&h_{n-m+1}(e_1^2,\ldots,e_m^2), h_{n-m+2}(e_1^2,\ldots,e_m^2),\ldots, h_{n}(e_1^2,\ldots,e_m^2) \big).
\end{align*}
\end{lemma}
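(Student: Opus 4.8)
The plan is to strip away the geometry entirely and reduce both identities to a single combinatorial statement about ideals generated by complete homogeneous symmetric polynomials, which is then proved by induction on the number of variables.

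The key auxiliary claim is: in the polynomial ring $A^*(pt)[x_1,\dots,x_m]$, for every $a$ with $1\le a\le m$,
\[
\bigl(\,h_{n-l+1}(x_1,\dots,x_l)\ :\ a\le l\le m\,\bigr)=\bigl(\,h_j(x_1,\dots,x_m)\ :\ n-m+1\le j\le n-a+1\,\bigr),
\]
both sides carrying exactly $m-a+1$ listed generators. I would prove this by induction on $m$; the base case $m=a$ is trivial since both ideals equal $\bigl(h_{n-a+1}(x_1,\dots,x_a)\bigr)$. For the inductive step one uses the two guises of the standard recursion $h_j(x_1,\dots,x_l)=h_j(x_1,\dots,x_{l-1})+x_l h_{j-1}(x_1,\dots,x_l)$. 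Rewriting $h_j(x_1,\dots,x_{m-1})=h_j(x_1,\dots,x_m)-x_m h_{j-1}(x_1,\dots,x_m)$ and invoking the $(m-1)$-variable hypothesis shows every generator of the left-hand ideal lies in the right-hand one; the unmodified recursion, applied by induction on $j$, gives the reverse inclusion. This is pure bookkeeping with index ranges.

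For the odd case the ideal $I_{2m,2n+1}$ is by definition $\bigl(h_{n-l+1}(e_1^2,\dots,e_l^2):1\le l\le m\bigr)$, so the auxiliary claim with $a=1$ and $x_i=e_i^2$ gives at once the asserted description $\bigl(h_j(e_1^2,\dots,e_m^2):n-m+1\le j\le n\bigr)$. For the even case I would first normalize the generators of $I_{2m,2n}$: writing $r_k=(-1)^{n-k+1}\bigl(\prod_{j=k+1}^{m}e_j^2\bigr)e_m'^2+h_{n-k}(e_1^2,\dots,e_k^2)$ for $k=1,\dots,m$ (so $I_{2m,2n}=(e_1e_2\dots e_me_m',r_1,\dots,r_m)$ and $r_m=(-1)^{n-m+1}e_m'^2+h_{n-m}(e_1^2,\dots,e_m^2)$), a direct computation with the recursion yields
\[
r_k+e_{k+1}^2\,r_{k+1}=h_{n-k}(e_1^2,\dots,e_{k+1}^2)\qquad(1\le k\le m-1),
\]
the $e_m'^2$-terms cancelling because $(-1)^{n-k+1}+(-1)^{n-k}=0$. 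Carrying out the invertible substitutions $r_k\mapsto r_k+e_{k+1}^2 r_{k+1}$ for $k=m-1,m-2,\dots,1$ (legitimate since $r_{k+1},\dots,r_m$ are kept at each step) replaces $(r_1,\dots,r_m)$ by $\bigl(h_{n-l+1}(e_1^2,\dots,e_l^2):2\le l\le m\bigr)+(r_m)$, and the auxiliary claim with $a=2$ rewrites the first summand as $\bigl(h_j(e_1^2,\dots,e_m^2):n-m+1\le j\le n-1\bigr)$. Since the generators $e_1e_2\dots e_me_m'$ and $r_m$ occur in both presentations, this is exactly the second description of $I_{2m,2n}$.

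The only point needing genuine care is the verification of the cancellation identity $r_k+e_{k+1}^2r_{k+1}=h_{n-k}(e_1^2,\dots,e_{k+1}^2)$ together with the index bookkeeping in the induction for the auxiliary claim; everything else is formal manipulation of ideals of symmetric polynomials, with no input from homotopy theory.
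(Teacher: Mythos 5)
Your argument is correct and rests on exactly the same ingredient as the paper's proof: the recursion $h_i(x_1,\dots,x_l)=h_i(x_1,\dots,x_{l-1})+x_l\,h_{i-1}(x_1,\dots,x_l)$, which is all the paper invokes in its one-line proof. You merely make explicit the unipotent (triangular) change of generators and the index bookkeeping that the paper leaves as ``obvious''.
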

\begin{proof}
This equalities follow from the obvious identity 
\[
h_i(x_1,x_2,\ldots,x_n)=h_i(x_1,x_2,\ldots,x_{n-1})+x_nh_{i-1}(x_1,x_2,\ldots,x_n).\qedhere
\]
\end{proof}

\begin{rem}
The vanishing of the polynomials $h_{i}(e_1^2,\ldots,e_m^2)$ corresponds to the vanishing of the Pontryagin classes $p_i(\Tc_{m+1}), i>n-m$ for the tautological bundle $\Tc_{m+1}$ over $\SF(2,4,\ldots,2m,2n+1)$.
\end{rem}
\begin{rem}
\label{rem_SF}
Consider the case of the maximal $SL_2$ flag variety, i.e. $\SF(2n)$ and $\SF(2n+1)$. Investigation of the above relations yields that the cohomology of these varieties coincide with the algebras of coinvariants for the Weyl groups $W(D_n)$ and $W(B_n)$ respectively. In other words, there are isomorphisms
\begin{align*}
 A^*(\SF(2n)) \cong \bigslant{A^*(pt)[e_1,e_2,...,e_{n}]}{\big( s_1,s_2,\dots,s_{n-1},t\big)} ,\\
A^*(\SF(2n+1)) \cong \bigslant{A^*(pt)[e_1,e_2,...,e_{n}]}{\big( s_1,s_2,\dots,s_n\big)}
\end{align*}
where $s_i=\sigma_i(e_1^2,e_2^2,\ldots,e_n^2)$ for the elementary symmetric polynomials $\sigma_i$ and $t=e_1e_2\ldots e_n$.
\end{rem}

\section{Symmetric polynomials.}
In this section we deal with the polynomials invariant under the action of the Weyl group $W(B_n)$ or $W(D_n)$ and obtain certain spanning sets for the polynomial rings. Our method is an adaptation of the one used in \cite[\S~10,~Proposition~3]{Ful}. 

Consider $\mathbb{Z}^{n}$ and fix a usual basis $\{e_1,\dots,e_n\}$. Let
$$
W(B_n)=\{\phi\in Aut(\mathbb{Z}^n)\,|\, \phi(e_i)=\pm e_j\}
$$
be the Weyl group of the root system $B_n$ and let
$$
W(D_n)=\{\phi\in Aut(\mathbb{Z}^n)\,|\, \phi(e_i)=(-1)^{k_i}e_j,\, (-1)^{\sum k_i}=1\}
$$
be the Weyl group of the root system $D_n$. Identifying $R=\mathbb{Z}[e_1,\dots,e_n]$ with the symmetric algebra $Sym^*((\mathbb{Z}^n)^\vee)$ in a usual way, we obtain the actions of these Weyl groups on $R$. Let $R_B=R^{W(B_n)}$ and $R_D=R^{W(D_n)}$ be the algebras of invariants.

For the elementary polynomials $\sigma_i\in \mathbb{Z}[x_1,\dots,x_n]$ consider
$$
s_i=\sigma_i(e_1^2,\dots,e_n^2),\quad t=\sigma_n(e_1,\dots,e_n).
$$
One can easily check that $R_B=\mathbb{Z}[s_1,\dots, s_n]$ and $R_D=\mathbb{Z}[s_1,\dots, s_{n-1},t]$.

In order to compute spanning sets for $R$ over $R_B$ and $R_D$ we need "decreasing degree" equalities provided by the following lemma.
\begin{lemma}
\label{lem_lower}
There exist polynomials $g_{i}, h_i\in R$ such that
$$
e_1^{2n}=\sum_{i=1}^{n} g_is_i, \quad e_1^{2n-1}=\sum_{i=1}^{n-1}h_is_i+h_nt.
$$
One may assume that $g_i$ and $h_i$ are homogeneous of appropriate degrees, i.e. $\deg g_i=2n-2i$, $\deg h_i=2n-2i-1$ and $\deg h_n=n-1$.
\end{lemma}
\begin{proof}
Let $I_B=\big( s_1,\dots,s_n\big)$ and $I_D=\big( s_1,\dots,s_{n-1},t \big)$ be the ideals generated by the homogeneous invariant polynomials of positive degree. We need to show that $e_1^{2n}\in I_B$ and $e_1^{2n-1}\in I_D$.  Set $S_B=R/I_B$, $S_D=R/I_D$.

Consider $S_B\left[\left[x\right]\right]$. Since all the $s_i$ belong to $I_B$ we have
$$
(1-\bar{e}_1^2x)(1-\bar{e}_2^2x){\dots}(1-\bar{e}_n^2x)=1,
$$
hence
$$
(1-\bar{e}_2^2x)(1-\bar{e}_3^2x){\dots}(1-\bar{e}_n^2x)=1+\bar{e}_1^2x+\bar{e}_1^4x^2+{\dots}
$$
Comparing the coefficients at $x^n$ we obtain $\bar{e}_1^{2n}=0$, thus $e_1^{2n}\in I_B$.

Consider $S_D\left[\left[x\right]\right]$. As above, we have
$$
(1-\bar{e}_1^2x^2)(1-\bar{e}_2^2x^2){\dots}(1-\bar{e}_n^2x^2)=1,
$$
hence
$$
(1+\bar{e}_1x)(1-\bar{e}_2^2x^2)(1-\bar{e}_3^2x^2){\dots}(1-\bar{e}_n^2x^2)=1+\bar{e}_1x+\bar{e}_1^2x^2+{\dots}
$$
Comparing the coefficients at $x^{2n-1}$ we obtain
$$
\bar{e}_1^{2n-1}=(-1)^{n-1}\bar{e}_1\bar{e}_2^2{\dots}\bar{e}_n^2= (-1)^{n-1}\bar{t}\bar{e}_2\bar{e}_3{\dots}\bar{e}_n=0,
$$
thus $e_1^{2n-1}\in I_D$.
\end{proof}

\begin{proposition}
\label{prop_spanning}
In the above notation we have the following spanning sets:
\begin{enumerate}
\item
$
\mathcal{B}_1=\left\{\left.e_1^{m_1}e_2^{m_2}\dots e_n^{m_n}\,\right|\, 0\le m_i\le 2n-2i+1\right\}
$\\
spans $R$ over $R_B$.
\item
$
\mathcal{B}_2=\left\{u_1u_2\dots u_{n-1} \,\left|\, u_i=
\left[\begin{array}{l}e_i^{m_i},\, 0\le m_i\le 2n-2i \\  e_{i+1}e_{i+2}\dots e_n \end{array}\right. \right.\right\}
$\\
spans $R$ over $R_D$.
\end{enumerate}
\end{proposition}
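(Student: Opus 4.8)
The plan is to prove both parts simultaneously by induction on $n$, peeling off the variable $e_1$. The base case $n=1$ is immediate, since $\mathcal{B}_1=\{1,e_1\}$ spans $\Z[e_1]$ over $R_B=\Z[e_1^2]$ and $\mathcal{B}_2=\{1\}$ spans $\Z[e_1]$ over $R_D=\Z[e_1]$. For the inductive step I would first record the factorisations
\[
\mathcal{B}_1=\{1,e_1,\dots,e_1^{2n-1}\}\cdot\mathcal{B}_1',\qquad
\mathcal{B}_2=\bigl(\{1,e_1,\dots,e_1^{2n-2}\}\cup\{e_2e_3\cdots e_n\}\bigr)\cdot\mathcal{B}_2',
\]
where $\mathcal{B}_1',\mathcal{B}_2'$ are the spanning sets of the proposition attached to the $n-1$ variables $e_2,\dots,e_n$; verifying that these match the stated bounds for $n$ and for $n-1$ is a routine reindexing.

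The reductions rest on three elementary identities. First, $e_1^2$ is a root of $\prod_{k=1}^n(T-e_k^2)=T^n-s_1T^{n-1}+\dots+(-1)^ns_n$, so
\[
e_1^{2n}=\sum_{l=1}^n(-1)^{l-1}s_l\,e_1^{2n-2l};
\]
this is the preceding lemma, in the sharp form in which the coefficients $g_i$ are monomials in $e_1$ alone. In the $D_n$ setting $s_n=t^2=e_1^2(e_2\cdots e_n)^2$, and dividing the last identity by $e_1$ (legitimate, as $R$ is a domain) gives
\[
e_1^{2n-1}=\sum_{l=1}^{n-1}(-1)^{l-1}s_l\,e_1^{2n-1-2l}+(-1)^{n-1}t\,(e_2e_3\cdots e_n).
\]
Second, comparing coefficients in $\prod_{k=1}^n(1+e_k^2x)=(1+e_1^2x)\prod_{k=2}^n(1+e_k^2x)$ yields $s_l=\sigma_l(e_2^2,\dots,e_n^2)+e_1^2\,\sigma_{l-1}(e_2^2,\dots,e_n^2)$, so by downward recursion $\sigma_l(e_2^2,\dots,e_n^2)\in R_B[e_1^2]$ for $l\le n-1$; in particular $(e_2\cdots e_n)^2=\sigma_{n-1}(e_2^2,\dots,e_n^2)\in R_B[e_1^2]$. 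Third, in the $D_n$ case $e_1\cdot(e_2e_3\cdots e_n)=t\in R_D$.

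Now write $R=\bigoplus_{a\ge0}e_1^a\,\Z[e_2,\dots,e_n]$ and feed the inductive hypothesis into the factor $\Z[e_2,\dots,e_n]$: it is spanned by $\mathcal{B}_1'$ over $\Z[\sigma_1(e_2^2,\dots,e_n^2),\dots,\sigma_{n-1}(e_2^2,\dots,e_n^2)]$ (resp. by $\mathcal{B}_2'$ over $\Z[\sigma_1(e_2^2,\dots,e_n^2),\dots,\sigma_{n-2}(e_2^2,\dots,e_n^2),\,e_2\cdots e_n]$). For part (1) the second identity puts this coefficient ring inside $R_B[e_1^2]$, so $R=\sum_{a\ge0,\ \nu\in\mathcal{B}_1'}R_B\,e_1^a\nu$, and then iterating the first identity lowers every $e_1$-exponent into $\{0,\dots,2n-1\}$ while leaving $\nu$ untouched, landing in the $R_B$-span of $\mathcal{B}_1$. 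For part (2) the coefficient ring sits inside $R_D[e_1^2]\cdot\{1,e_2\cdots e_n\}$ (combine the second identity with the bookkeeping $e_1^a(e_2\cdots e_n)^j=e_1^{a-j}t^j$ for $a\ge j$ and $=t^a(e_2\cdots e_n)^{j-a}$ for $a<j$, reducing $(e_2\cdots e_n)^2$ by the second identity), hence $R=\sum R_D\,e_1^a\,c\,\nu$ with $c\in\{1,e_2\cdots e_n\}$ and $\nu\in\mathcal{B}_2'$; the third identity trades $e_1(e_2\cdots e_n)$ for $t\in R_D$ and reduces to $c=1$ whenever $a\ge1$, after which the first identity (its even version for even exponents, its odd version for odd ones) lowers the $e_1$-exponent.

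The crux, and the step I expect to demand the most care, is the last one in the $D_n$ case: reducing the top odd power $e_1^{2n-1}$ throws off the term $(-1)^{n-1}t\,(e_2\cdots e_n)\nu$, and one uses precisely that $(e_2\cdots e_n)\nu$ is one of the ``special'' generators adjoined to $\mathcal{B}_2$, while every other term produced has $e_1$-exponent at most $2n-3$; only with this does the whole expression land in the $R_D$-span of $\mathcal{B}_2$, and then the process terminates. The remaining fiddly points — exactly which $\sigma_l(e_2^2,\dots,e_n^2)$ and which powers of $e_2\cdots e_n$ may be absorbed into $R_D[e_1]$, and the matching of bounds in the two factorisations — are routine.
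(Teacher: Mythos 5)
Your proposal is correct and follows essentially the same route as the paper: induction on $n$ peeling off $e_1$, the degree-lowering relations $e_1^{2n}\in\big(s_1,\dots,s_n\big)$ and $e_1^{2n-1}\in\big(s_1,\dots,s_{n-1},t\big)$, and the observation that the $\sigma_l(e_2^2,\dots,e_n^2)$ lie in $R_B[e_1^2]$ (resp.\ its $D_n$-analogue with the extra factor $e_2\cdots e_n$). The only real difference is bookkeeping: you use the sharp form $e_1^{2n}=\sum_{l=1}^{n}(-1)^{l-1}s_l e_1^{2n-2l}$ coming from $\prod_k(e_1^2-e_k^2)=0$, whose coefficients are monomials in $e_1$ times invariants, so you can induct directly on the $e_1$-exponent, whereas the paper only asserts the existence of homogeneous coefficients $g_i,h_i$ and therefore runs a secondary induction on the total degree of a monomial with a short case analysis.
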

\begin{proof}
In both cases proceed by induction on $n$. The base case of $n=1$ is clear. Denote by $\mathcal{B}_1'$ and $\mathcal{B}_2'$ the spanning sets in $R'=\mathbb{Z}[e_2,{\dots},e_n]$ and let $s_i',t'\in R'$ be the corresponding invariant polynomials. Note that $s_i=e_1^2s_{i-1}'+s_i'$ and $t=e_1t'$.


Suppose that one can not express some monomial as a linear combination of $\mathcal{B}_1$ (or $\mathcal{B}_2$) with $R_B$-coefficients (or $R_D$-coefficients). Consider among these monomials the ones of minimal degree and choose among them a monomial with the largest degree at $e_1$, denote it by $f=e_1^{k_1}e_2^{k_2}{\dots}e_n^{k_n}\in R$.

(1) In case of $k_1\ge 2n$ we can use Lemma~\ref{lem_lower} and substitute $\sum g_i s_i$ for $e_1^{2n}$ obtaining
$$
f=\sum s_ig_ie_1^{k_1-2n}e_2^{k_2}{\dots}e_n^{k_n}
$$
with $\deg g_ie_1^{k_1-2n}e_2^{k_2}{\dots}e_n^{k_n} < \deg f$, so the right-hand side is an $R_B$-linear combination of $\mathcal{B}_1$. Now suppose that $k_1<2n$. By the induction we have
$$
e_2^{k_2}e_3^{k_3}{\dots}e_n^{k_n}=\sum \alpha_j(s_1',\dots,s_{n-1}')b_j'
$$
for some $b_j'\in \mathcal{B}_1'$ and $\alpha_j\in \mathbb{Z}[x_1,\dots,x_{n-1}]$. We can assume that all the summands at the right-hand side are homogeneous of total degree $k_2+{\dots}+k_n$. Since $s_i=e_1^2s_{i-1}'+s_i'$ one has
$$
\alpha_j(s_1,\dots,s_{n-1})= \alpha_j\left(s_1',\dots,s_{n-1}'\right)+\sum_{l>0}e_1^l\beta_{jl}
$$
for some $\beta_{jl} \in R_B'$. Thus we obtain
$$
f=\sum_j e_1^{k_1}\alpha_j(s_1,\dots,s_{n-1})b_j'-\sum_{j,l} e_1^{k_1+l}\beta_{jl} b_j'.
$$
Note that $e_1^{k_1}b_j'\in\mathcal{B}_1$, so the first sum is an $R_B$-linear combination of the monomials from the spanning set. The second sum consists of monomials of degree $\deg f$ and degrees at $e_1$ of these monomials are greater then $k_1$, so by the choice of $f$  it is an $R_B$-linear combination of the monomials from the spanning set. Hence $f$ is an $R_B$-linear combination of $\mathcal{B}_1$, contradicting the assumption.

(2) As above, in case of $k_1\ge 2n-1$ we can use Lemma~\ref{lem_lower} and lower the total degree, so suppose that $k_1<2n -1$. By induction we have
$$
e_2^{k_2}e_3^{k_3}{\dots}e_n^{k_n}=\sum \alpha_j(s_1',\dots,s_{n-2}',t')b_j'
$$
for some $b_j'\in \mathcal{B}_2'$ and $\alpha_j\in \mathbb{Z}[x_1,\dots,x_{n-1}]$. We may assume that all the summands on the right-hand side are homogeneous of degree $k_2+k_3+\ldots+k_n$. One has $t'^2=s_{n-1}'$ hence
$$
\alpha_j(s_1',\dots,s_{n-2}',t)=\widetilde{\alpha}_j(s_1',\dots,s_{n-2}',s_{n-1}')+t'\widehat{\alpha}_j(s_1',\dots,s_{n-2}',s_{n-1}').
$$
As above, we can substitute $s_i$ into $\widetilde{\alpha}_j$ and $\widehat{\alpha}_j$ and obtain some $\widetilde{\beta}_{jl},\widehat{\beta}_{jl}\in R_D'$. Thus we have
$$
f=\sum_j e_1^{k_1}\widetilde{\alpha}_j(s_1,\dots,s_{n-1})b_j'+\sum_j e_1^{k_1}\widehat{\alpha}_j(s_1,\dots,s_{n-1})t'b_j'-
$$
$$
-\sum_{j,l} e_1^{k_1+l}\widetilde{\beta}_{jl} b_j'-\sum_{j,l} e_1^{k_1+l}\widehat{\beta}_{jl}t' b_j'.
$$
In the first sum we have $e_1^{k_1}b_j'\in \mathcal{B}_2$. One has $t'b_j'\in \mathcal{B}_2$, so in case of $k_1=0$ the second sum is a linear combination of the elements from the spanning set, otherwise, if $k_1\ge 1$, one has $t=e_1t'$ and $\deg (e_1^{k_1-1}\widehat{\alpha}_j(s_1,\dots,s_{n-1})b_j') < \deg f$, so in both cases the second sum is an $R_D$-linear combination of $\mathcal{B}_2$. The third and fourth sums are dealt with like the second one in (1), the monomials have the same degree as $f$ but the degrees at $e_1$ are greater. Thus we obtain that $f$ is an $R_D$-linear combination of $\mathcal{B}_2$ contradicting the assumption.

\end{proof}

\section{Cohomology of the special linear Grassmannians and $BSL_n$.}

Now we are ready to compute the cohomology of the special linear Grassmannians. Recall that $h_i(x_1,x_2,\ldots,x_n)=g_i(\sigma_1,\sigma_2,\ldots,\sigma_n)$ for a certain polynomial $g_i\in \mathbb{Z}[y_1,y_2,\ldots,y_n]$.

\begin{theorem}
\label{thm_SGr}
For the special linear Grassmannians we have the following isomorphisms of $A^*(pt)$-algebras.
\begin{enumerate}
\item
$
\phi_1\colon \bigslant{A^*(pt)[p_1,p_2,...,p_{m},e,e']}{J_{2m,2n}} \xrightarrow{\simeq} A^*(SGr(2m,2n)),
$
where 
\begin{align*}
J_{2m,2n}=\big(&ee', e^2-p_m, (-1)^{n-m+1}e'^2+g_{n-m}(p_1,p_2,\ldots,p_m), \\
 & g_{n-m+1}(p_1,p_2\ldots,p_m), g_{n-m+2}(p_1,p_2,\ldots,p_m), \ldots,\\
 & g_{n-1}(p_1,p_2,\ldots,p_m) \big)
\end{align*}
and the isomorphism is induced by $\phi_1(p_i)=p_i(\Tc_1)$, $\phi_1(e)=e(\Tc_1)$ and $\phi_1(e')=e(\Tc_2)$.
\item
$
\phi_2\colon \bigslant{A^*(pt)[p_1,p_2,...,p_{m},e]}{J_{2m,2n+1}} \xrightarrow{\simeq} A^*(SGr(2m,2n+1)),
$
where 
\begin{align*}
J_{2m,2n+1}=\big(&e^2-p_m, g_{n-m+1}(p_1,p_2,\ldots,p_m), \\
&g_{n-m+2}(p_1,p_2,\ldots,p_m), \ldots, g_{n}(p_1,p_2,\ldots,p_m)\big)
\end{align*}
and the isomorphism is induced by $\phi_2(p_i)=p_i(\Tc_1)$ and $\phi_2(e)=e(\Tc_1)$.
\end{enumerate}
\end{theorem}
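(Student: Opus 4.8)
The plan is to descend the presentation along the iterated $SL_2$-flag bundle sitting over $SGr(2m,n)$. Forgetting the interior flag $V_1\le\dots\le V_{m-1}$ inside the $2m$-dimensional subspace gives a morphism
\[
q\colon \SF(2,4,\dots,2m,n)\longrightarrow SGr(2m,n),
\]
which is exactly the relative special linear flag bundle $\SF(2,4,\dots,2m-2,\Tc_1)$ of the rank $2m$ tautological bundle $\Tc_1$ on $SGr(2m,n)$. By the splitting principle (Theorem~\ref{thm_split}, applied in the even-rank case $\rank\Tc_1=2m$) the map $q^A$ is injective and $A^*(\SF(2,4,\dots,2m,n))$ is a free $A^*(SGr(2m,n))$-module on the monomial basis $\mathcal B$ recorded there, built from the Euler classes $e_1,\dots,e_m$ of the rank $2$ tautological subquotients on the flag bundle. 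Since $q^*\Tc_1$ is, after a further affine bundle (as in Lemma~\ref{lem_eulermult}), the direct sum of those rank $2$ pieces, multiplicativity of Euler and Borel classes (Lemmas~\ref{lem_eulermult},~\ref{lem_BorelCartan}) together with Lemma~\ref{lemm_borel_cl} gives $q^A e(\Tc_1)=e_1\cdots e_m$, $q^A b_i(\Tc_1)=\pm\sigma_i(e_1^2,\dots,e_m^2)$, and $q^A e(\Tc_2)=e_m'$, the Euler class of the complementary bundle. Hence $q^A$ realizes $A^*(SGr(2m,n))$ as a subring of the already computed ring $A^*(\SF(2,4,\dots,2m,n))\cong A^*(pt)[e_1,\dots,e_m,e_m']/I$ of Proposition~\ref{prop_partialFlag} (with $I=I_{2m,2n}$ or $I_{2m,2n+1}$).

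First I would check that $\phi_1$ (resp.\ $\phi_2$) is well defined, i.e.\ that the ideal $J_{2m,2n}$ (resp.\ $J_{2m,2n+1}$) goes to zero. The relation $ee'=0$ is multiplicativity of the Euler class for $\Tc_1\oplus\Tc_2\cong\triv^n$ (Lemma~\ref{lem_eulermult}); the relation $e^2=b_m$, up to the sign governed by $\rank\Tc_1=2m$ and recorded in Corollary~\ref{cor_dual}(4), is that item applied to $\Tc_1$; and the relations $g_j(b_1,\dots,b_m)=0$ for $j>n-m$ together with the $e'^2$-relation express $b_j(\Tc_2)=0$ for $j>n-m$ and $b_{n-m}(\Tc_2)=\pm e(\Tc_2)^2$, which follow from $b_*(\Tc_2)=b_*(\Tc_1)^{-1}$ (Lemma~\ref{lem_BorelCartan}, since $\Tc_1\oplus\Tc_2$ is trivial) and Corollary~\ref{cor_dual}; here $g_j$ is the polynomial with $h_j=g_j(\sigma_1,\dots,\sigma_m)$, so $g_j(b_1,\dots,b_m)$ pulls back under $q^A$ to $h_j(e_1^2,\dots,e_m^2)$, matching the generators of $I$ in the form of Lemma~\ref{lem_partialRel}. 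Consequently $q^A\circ\phi_1$ (resp.\ $q^A\circ\phi_2$) is the substitution homomorphism $C:=A^*(pt)[b,e,e']/J\to A^*(\SF)$ sending $b_i\mapsto\pm\sigma_i(e_1^2,\dots,e_m^2)$, $e\mapsto e_1\cdots e_m$, $e'\mapsto e_m'$, and in particular it is injective once $A^*(\SF)$ is known to be free over $C$.

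It remains to prove that $\phi_1,\phi_2$ are isomorphisms, and this is where Sections~10--11 are used. The key claim is that $A^*(\SF(2,4,\dots,2m,n))$ is free, of the same rank and on the same monomial basis $\mathcal B$, both over $A^*(SGr(2m,n))$ via $q^A$ (Theorem~\ref{thm_split}) and over $C$ via the substitution above. After eliminating $b_m=\pm e^2$ and quotienting by the $h_j$, the latter reduces to the purely algebraic statement that $\Z[e_1,\dots,e_m]$ is a \emph{free} module over $R_D=\Z[s_1,\dots,s_{m-1},t]$ on the set $\mathcal B_2$ of Proposition~\ref{prop_spanning}: that proposition gives a spanning set, and a graded-rank (Poincar\'e series) count, with $q=1$ value $|W(D_m)|=2^{m-1}m!=|\mathcal B_2|$, promotes it to a basis; base change along $\Z\to A^*(pt)$, along $R_D\to R_D/(h_j)$, and along the adjunction of $e_m'$ then yields freeness of $A^*(\SF)$ over $C$ on $\mathcal B_2$, which one identifies with the basis $\mathcal B$ of Theorem~\ref{thm_split}. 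Finally, both $q^A(A^*(SGr(2m,n)))$ and the image of $C$ are subrings of $A^*(\SF)$ over which $A^*(\SF)$ is free on this common basis, and the image of $C$ is contained in $q^A(A^*(SGr(2m,n)))$; decomposing an arbitrary element along the basis forces the two subrings to coincide, so $\phi_1$ (resp.\ $\phi_2$) is surjective, and injectivity follows from injectivity of $q^A$ and of $C\hookrightarrow A^*(\SF)$.

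The hard part is the third paragraph: upgrading the spanning sets of Proposition~\ref{prop_spanning} to actual bases and matching them with the flag bases of Theorem~\ref{thm_split}, while keeping the signs in $e^2=\pm b_m$ and in the $g_j$ consistent through the bundle $q$. The geometric ingredients — the special linear projective bundle theorem, the splitting principle, and the partial-flag computations — are already available.
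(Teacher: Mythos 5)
Your proposal follows the paper's own proof essentially step for step: pull back along the flag bundle $\SF(\Tc_1)\cong\SF(2,4,\dots,2m,n)$ over $SGr(2m,n)$, use Theorem~\ref{thm_split} for injectivity of the pullback and for the monomial basis, use Proposition~\ref{prop_partialFlag} together with Lemma~\ref{lem_partialRel} to present $A^*(\SF)$, verify the relations via Corollary~\ref{cor_dual} and multiplicativity of Euler and Borel classes, and conclude injectivity and surjectivity of $\phi_1,\phi_2$ by comparing the spanning set of Proposition~\ref{prop_spanning} with the flag-bundle basis. The only real difference is that you spell out the upgrade of that spanning set to a free basis (by the rank count $|W(D_m)|=2^{m-1}m!$), a point the paper asserts when it invokes freeness over $A^*(pt)[b_1,\dots,b_m,e,e']$; this is a refinement of the same argument, not a different route, and your handling of the sign bookkeeping for $e^2=\pm b_m$ matches the conventions already implicit in the paper's proof.
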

\begin{proof}
\textbf{(1)} Consider the special linear flag variety $p\colon \SF(\Tc_1)\to SGr(2m,2n)$. The homomorphism $p^A$ is injective by Theorem~\ref{thm_split}. There is an isomorphism
\[
\SF(\Tc_1)\cong \SF(2,4,\ldots,2m,2n).
\]
Denote this variety by $\SF$. Proposition~\ref{prop_partialFlag} yields that there is an injection
\[
p^A\colon A^*(SGr(2m,2n))\to \bigslant{A^*(pt)[e_1,\ldots,e_m,e'_m]}{I_{2m,2n}}.
\]
We have $p^A(e(\Tc_1))=e_1e_2\ldots e_m, p^A(e(\Tc_2))=e_m'$ and by Lemma~\ref{lemm_borel_cl} and multiplicativity of total Pontryagin classes  $p^A(p_i(\Tc_1))=\sigma_i(e_1^2,e_2^2,\ldots,e_m^2)$. From now on we omit $p^A$ and regard $A^*(SGr(2m,2n))$ as a subalgebra of $A^*(\SF)$. Lemma~\ref{lem_partialRel} shows that $J_{2m,2n} \subset I_{2m,2n}=J_{2m,2n}A^*(pt)[e_1,\ldots,e_m,e_m']$, moreover, 
\[
J_{2m,2n}=I_{2m,2n}\cap A^*(pt)[p_1,\ldots,p_m,e,e']
\]
since by Proposition~\ref{prop_spanning} the algebra $A^*(pt)[e_1,\ldots,e_m,e'_m]$ is a free module over $A^*(pt)[p_1,\ldots,p_m,e,e']$.

Hence there exists the announced map
\[
\phi_1\colon \bigslant{A^*(pt)[p_1,p_2,...,p_{m},e,e']}{J_{2m,2n}} \xrightarrow{} A^*(SGr(2m,2n))\subset A^*(\SF)
\]
with $\phi_1(p_i)=p_i(\Tc_1)=\sigma_i(e_1^2,e_2^2,\ldots,e_m^2)$, $\phi_1(e)=e(\Tc_1)=e_1e_2\ldots e_m$ and $\phi_1(e')=e(\Tc_2)=e_m'$ and it is injective.

Applying Theorem~\ref{thm_split} we obtain that the set
$$
\mathcal{B}
=\left\{u_1u_2\dots u_{m-1} \,\left|\, u_i=
\left[\begin{array}{l}e_i^{m_i},\, 0\le m_i\le 2n-2i \\  e_{i+1}e_{i+2}\dots e_{m} \end{array}\right. \right.\right\},$$
forms a basis of $A^*(\SF)$ over $A^*(SGr(2m,2n))$. Note that by the same theorem $A^*(\SF)$ is generated as an $A^*(pt)$-algebra by $e_1,e_2\dots,e_m,e_m'$, thus by Proposition~\ref{prop_spanning} we know that $\mathcal{B}$ spans $A^*(\SF)$ over the algebra 
\[
Im (\phi_1)=A^*(pt)[\phi_1(p_1),\phi_1(p_2),\dots,\phi_1(p_{m-1}),\phi_1(e),\phi_1(e')],
\]
hence $Im(\phi_1)=A^*(SGr(2m,2n))$ and $\phi_1$ is surjective.

\textbf{(2)} could be obtained via the similar reasoning.
\end{proof}

\begin{rem}
It seems that there is no good description for $A^*(SGr(2m+1,2n))$ in terms of the Euler and Pontryagin characteristic classes. For instance, consider the simplest example $SGr(1,2)\cong\A^2-\{0\}$. It is isomorphic to the unpointed motivic sphere $S^{3,2}$, so $A^{*}(SGr(1,2))\cong A^{*}(pt)\oplus A^{*-1}(pt)$, but we do not have an appropriate nontrivial special linear bundle over $A^*(\A^2-\{0\})$ to take the characteristic class. Another complication comes from the fact that the grading shift is odd whereas our characteristic classes lie in the even degrees.
\end{rem}

Now we turn to the computation of the cohomology rings of the classifying spaces
$$
BSL_n=\varinjlim\limits_{m\in \mathbb{N}} SGr(n,m).
$$
The case of $BSL_{2n}$ easily follows from Theorem~\ref{thm_SGr}. In order to compute the cohomology of $BSL_{2n+1}$ we will use a certain Gysin sequence relating $A^*(BSL_{2n+1})$ to $A^*(BSL_{2n})$.

Recall that $A^*$ is constructed from a representable cohomology theory. In this setting we have the following proposition relating the cohomology groups of a limit space to the limit of the cohomology groups \cite[Lemma~A.5.10]{PPR2}.
\begin{proposition}
\label{prop_lim1}
For any sequence of motivic spaces $X_1\xrightarrow{i_1} X_2\xrightarrow{i_2} X_3\xrightarrow{i_3}\dots$ and any $p$ we have an exact sequence of abelian groups
$$
0\to{\varprojlim}^1 A^{p-1}(X_k)\to A^p(\varinjlim X_k) \to \varprojlim A^p(X_k)\to 0.
$$
\end{proposition}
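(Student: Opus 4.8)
The plan is to deduce this from the standard Milnor $\varprojlim^1$-exact sequence. Recall that $A^{*}(-)$ is obtained from the bigraded theory represented by the $\eta$-inverted spectrum $A[\eta^{-1}]$, so for a motivic space $Y$ one has $A^{p}(Y)\cong\Hom_{\SH(k)}\bigl(\Sigma^\infty_T Y_+,\Sigma^{2p,p}A[\eta^{-1}]\bigr)$; in particular $A^{*}(-)$ is represented. The first step is to observe that since the transition maps $i_k$ are monomorphisms of motivic spaces (in our applications, closed embeddings $SGr(n,m)\hookrightarrow SGr(n,m+1)$), the presheaf-level colimit $\varinjlim X_k$ computes the homotopy colimit of the tower in $H_\bullet(k)$; applying the left adjoint $\Sigma^\infty_T$ and passing to $\SH(k)$ one gets a mapping-telescope cofiber sequence
\[
\bigvee_{k\ge 1}\Sigma^\infty_T (X_k)_+\ \xrightarrow{\ 1-\mathrm{sh}\ }\ \bigvee_{k\ge 1}\Sigma^\infty_T (X_k)_+\ \longrightarrow\ \Sigma^\infty_T\bigl(\varinjlim_k X_k\bigr)_+,
\]
where $\mathrm{sh}$ is the map induced on the $k$-th summand by $i_k$.

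Next I would apply $\Hom_{\SH(k)}(-,\Sigma^{2p,p}A[\eta^{-1}])$ to this cofiber sequence and use its Puppe long exact sequence. Because $\Sigma^\infty_T$ of a disjoint union of spaces is the coproduct of the $\Sigma^\infty_T (X_k)_+$, and $\Hom_{\SH(k)}$ out of a coproduct is the product of the individual $\Hom$-groups, the long exact sequence reads
\[
\dots\to\prod_k A^{p-1}(X_k)\xrightarrow{1-s}\prod_k A^{p-1}(X_k)\to A^{p}\bigl(\varinjlim_k X_k\bigr)\to\prod_k A^{p}(X_k)\xrightarrow{1-s}\prod_k A^{p}(X_k)\to\dots,
\]
with $s=(i_k^{A})_k$ the shift endomorphism of the product attached to the tower $\{A^{*}(X_k)\}$. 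By the very definition of the derived inverse limit, $\ker(1-s)$ on $\prod_k A^{p}(X_k)$ is $\varprojlim_k A^{p}(X_k)$ and $\operatorname{coker}(1-s)$ on $\prod_k A^{p-1}(X_k)$ is $\varprojlim^{1}_k A^{p-1}(X_k)$. Splicing the long exact sequence at the term $A^{p}(\varinjlim_k X_k)$ then yields exactly the asserted short exact sequence.

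The only genuinely motivic input — and the step I expect to require the most care — is the passage from the presheaf-level colimit $\varinjlim X_k$ to the homotopy colimit of the tower in $\SH(k)$. This relies on the transition maps being cofibrations (monomorphisms suffice in the injective model structure on motivic spaces), so that the filtered colimit is already homotopy invariant, together with the fact that $\Sigma^\infty_T$, being left Quillen, preserves homotopy colimits; one also uses that $(-)_+$ commutes with colimits. Granting that identification, the rest is the formal Milnor-sequence argument and is insensitive to the ambient category; this is the content of \cite[Lemma~A.5.10]{PPR2}.
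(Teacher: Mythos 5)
The paper offers no argument of its own here --- it simply cites \cite[Lemma~A.5.10]{PPR2} --- and your mapping-telescope/Milnor $\varprojlim^1$ argument is precisely the standard proof of that lemma, so your proposal is correct and consistent with the paper, granted the (intended) reading that $A^*$ is represented by the $\eta$-inverted spectrum. The only cosmetic difference is that you assume the transition maps are monomorphisms, which the statement does not require, but this is harmless: an arbitrary tower can be replaced by one of cofibrations whose colimit is schemewise equivalent (sequential colimits of simplicial presheaves preserve schemewise weak equivalences), and in all of the paper's applications the maps are closed embeddings anyway.
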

As usual, the $\lim^1$ term vanishes whenever the Mittag-Leffler condition is satisfied, i.e. if for every $i$ there exists some $k$ such that for every $j\ge k$ one has $Im(A^{*}(X_j)\to A^{*}(X_i))=Im(A^{*}(X_k)\to A^{*}(X_i))$.

Consider the sequence of embeddings
$$
{\dots}\to SGr(2n,2m+1)\xrightarrow{i_{2m+1}} SGr(2n,2m+3) \to {\dots}
$$
By Theorem~\ref{thm_SGr} we know that $i_{2m+1}^A$ is surjective hence
$$
A^p(BSL_{2n})\cong\varprojlim A^p(SGr(2n,2m+1))\cong \varprojlim A^p(SGr(2n,m)).
$$
The sequence of the tautological special linear bundles $\Tc_1$ over $SGr(2n,m)$ gives rise to a bundle $\Tc$ over $BSL_{2n}$. We have a sequence of embeddings of the Thom spaces
$$
{\dots}\to Th(\Tc_1(2n,2m+1)) \xrightarrow{j_{2m+1}} Th(\Tc_1(2n,2m+3)) \to {\dots}
$$
where $\Tc_1(i,j)$ is the first tautological special linear bundle over $SGr(i,j)$. Since all the considered morphisms $\Tc_1(2n,k)\to \Tc_1(2n,l)$ are inclusions there is a canonical isomorphism $\Tc/(\Tc-BSL_{2n})=Th(\Tc)\cong \varinjlim \Tc_1(2n,m)$. For every $k$ we have an isomorphism
$$
A^{*-2n}(SGr(2n,k))\xrightarrow{\cup th(\Tc_1(2n,k))} A^*(Th(\Tc_1(2n,k))),
$$
so $j_{2m+1}^A$ are surjective as well as $i_{2m+1}$ and
$$
A^p(Th(\Tc))\cong\varprojlim A^p(\Tc_1(2n,m)).
$$
\begin{definition}
Let $\Tc$ be the tautological bundle over $BSL_{2n}$. Denote by $b_i(\Tc), e(\Tc)\in A^*(BSL_{2n})$ and $th(\Tc)\in A^*(Th(\Tc))$ the elements corresponding to the sequences of the classes of the tautological bundles,
$$
p_i(\Tc)=(\dots,p_i(\Tc_1(2n,m)),p_i(\Tc_1(2n,m+1)),\dots),
$$
$$
e(\Tc)=(\dots,e(\Tc_1(2n,m)),e(\Tc_1(2n,m+1)),\dots),
$$
$$
th(\Tc)=(\dots,th(\Tc_1(2n,m)),th(\Tc_1(2n,m+1)),\dots),
$$
with $\Tc_1(2n,m)$ being the tautological special linear bundle over $SGr(2n,m)$.
\end{definition}

The above considerations show that we have a Gysin sequence for the tautological bundle over the classifying space $BSL_{2n}$.
\begin{lemma}
\label{lemm_gysinBSL}
Let $\Tc$ be the tautological bundle over $BSL_{2n}$. Then there exists a long exact sequence
$$
{}\to A^{*-2n}(BSL_{2n})\xrightarrow{\cup e(\Tc)} A^*(BSL_{2n}) \xrightarrow{j^A} A^*(BSL_{2n-1})\xrightarrow{\partial}{}
$$
\end{lemma}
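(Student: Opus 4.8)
The plan is to produce the long exact sequence in two moves. The first is already carried out in the paragraphs preceding the lemma: the Gysin sequences of the rank $2n$ tautological bundles $\Tc_1(2n,m)$ over $SGr(2n,m)$ are compatible with the embeddings $SGr(2n,m)\hookrightarrow SGr(2n,m+2)$, the towers $\{A^*(SGr(2n,m))\}_m$ have surjective transition maps by Theorem~\ref{thm_SGr}, hence all the Mittag--Leffler / $\varprojlim^1$ obstructions in the resulting ladder vanish, and Proposition~\ref{prop_lim1} yields both $A^*(\Tc^0)=\varprojlim_m A^*(\Tc_1(2n,m)^0)$ (with $\Tc^0=\varinjlim_m\Tc_1(2n,m)^0$) and the exactness of the inverse limit of the finite Gysin sequences. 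This produces
\[
\dots\to A^{*-2n}(BSL_{2n})\xrightarrow{\cup e}A^*(BSL_{2n})\to A^*(\Tc^0)\xrightarrow{\partial}A^{*-2n+1}(BSL_{2n})\to\dots,\qquad e=e(\Tc),
\]
so the remaining content is the identification $A^*(\Tc^0)\cong A^*(BSL_{2n-1})$. Morally $\Tc^0\simeq BSL_{2n-1}$ because $SL_{2n}$ acts transitively on $\A^{2n}-\{0\}$ with the stabilizer of a vector an affine-space extension of $SL_{2n-1}$; I would make this precise within the finite-dimensional models by interposing the special linear flag variety $\SF(2n-1,2n,m)$.

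The variety $\SF(2n-1,2n,m)$ has two forgetful projections. Forgetting the $2n$-dimensional subspace gives $\pi_1\colon\SF(2n-1,2n,m)\to SGr(2n,m)$; over $(V_2,\lambda_2)$ a point of the fibre is a hyperplane $V_1\le V_2$ together with a trivialization of $\det V_1$, which by Lemma~\ref{quottriv} is the same as a nonzero functional in $V_2^\vee$. Thus $\pi_1$ identifies $\SF(2n-1,2n,m)$ with the complement of the zero section of the dual tautological bundle $\Tc_1(2n,m)^\vee$, and therefore, by the canonical isomorphism $A^{*,*}(E^0)\cong A^{*,*}(E^{\vee 0})$ of Lemma~\ref{lemm_dual_eu}, with $A^*(\Tc_1(2n,m)^0)$ on cohomology --- compatibly with the transition maps, since the \emph{sub}bundle $\Tc_1$ restricts to $\Tc_1$ along $SGr(2n,m)\hookrightarrow SGr(2n,m+2)$. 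Forgetting instead the $(2n-1)$-dimensional subspace gives $\pi_0\colon\SF(2n-1,2n,m)\to SGr(2n-1,m)$; over $(V_1,\lambda_1)$ a point of the fibre is a $2n$-dimensional $V_2\supseteq V_1$ with a compatible trivialization, equivalently a nonzero vector of $k^m/V_1$. Thus $\pi_0$ identifies $\SF(2n-1,2n,m)$ with the complement of the zero section of the tautological quotient bundle $\Tc_2(2n-1,m)$, which has rank $m-2n+1$.

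Now I would pass to the colimit over even $m$ (a cofinal subsequence). Combining the two identifications, $A^*(\Tc^0)=\varprojlim_m A^*(\SF(2n-1,2n,m))$. For even $m$ the bundle $\Tc_2(2n-1,m)$ has odd rank, so Theorem~\ref{thm_main} applies and gives a splitting
\[
A^*(\SF(2n-1,2n,m))\cong A^*(SGr(2n-1,m))\oplus A^{*-(m-2n)}(SGr(2n-1,m)),
\]
with second summand generated by $e(\Tc_{\Tc_2(2n-1,m)})$ (the class of Definition~\ref{defTc}). The hard part will be to show this generator is killed by the transition map $A^*(\SF(2n-1,2n,m+2))\to A^*(\SF(2n-1,2n,m))$: restricting $\Tc_2(2n-1,m+2)$ along $SGr(2n-1,m)\hookrightarrow SGr(2n-1,m+2)$ splits off a trivial summand $\triv^2$, so $\Tc_{\Tc_2(2n-1,m+2)}$ restricts to $\Tc_{\Tc_2(2n-1,m)}\oplus\triv^2$, whose Euler class vanishes by Lemma~\ref{lemm_triv_section} together with multiplicativity of the Euler class (Lemma~\ref{lem_eulermult}). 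Granting this, the transition maps are block diagonal in the chosen decompositions, the $A^{*-(m-2n)}$-tower has all transition maps equal to zero (so both its $\varprojlim$ and $\varprojlim^1$ vanish), and hence $\varprojlim_m A^*(\SF(2n-1,2n,m))=\varprojlim_m A^*(SGr(2n-1,m))=A^*(BSL_{2n-1})$ --- which incidentally also recovers the Mittag--Leffler property of $\{A^*(SGr(2n-1,m))\}$. Substituting $A^*(\Tc^0)\cong A^*(BSL_{2n-1})$ into the sequence above gives the asserted long exact sequence.
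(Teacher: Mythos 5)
Your geometric skeleton is sound and in fact runs through the same varieties as the paper: the identifications $\SF(2n-1,2n,m)\cong(\Tc_1(2n,m)^\vee)^0$ and $\SF(2n-1,2n,m)\cong\Tc_2(2n-1,m)^0$ are correct, and so is the key computation that the extra generator $e(\Tc_{\Tc_2(2n-1,m+2)})$ of Theorem~\ref{thm_main} restricts to $e(\Tc_{\Tc_2(2n-1,m)}\oplus\triv^2)=0$. But the limit-theoretic backbone has a genuine hole. First, surjectivity of the towers $\{A^*(SGr(2n,m))\}$ does \emph{not} make ``all the Mittag--Leffler obstructions in the ladder vanish'': Mittag--Leffler for two out of three towers in a ladder of long exact sequences says nothing about the third, and Proposition~\ref{prop_lim1} by itself does not give exactness of an inverse limit of exact sequences --- for that you need degreewise ML of \emph{every} tower involved (the standard $\varprojlim$--$\varprojlim^1$ argument for towers of complexes), including $\{A^*(\Tc_1(2n,m)^0)\}$. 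Second, your own block-diagonal analysis reduces ML (and the $\varprojlim^1$-vanishing needed both for $A^*(\Tc^0)\cong\varprojlim A^*(\SF(2n-1,2n,m))$ and for $\varprojlim A^*(SGr(2n-1,m))\cong A^*(BSL_{2n-1})$) precisely to the tower $\{A^*(SGr(2n-1,2m))\}$ --- the odd-in-even special linear Grassmannians whose cohomology the paper pointedly does not compute --- so the closing remark that the analysis ``incidentally recovers'' this ML property is circular: it is an input, not an output. The gap is patchable: factor each restriction through the intermediate odd level, use $SGr(2n-1,2m+1)\cong SGr(2m-2n+2,2m+1)$ together with Theorem~\ref{thm_SGr}(2), and observe that after adding trivial summands the Euler class dies (Lemma~\ref{lemm_triv_section}) while the Borel classes of $\Tc_2$ survive, so all images stabilize on the subalgebra generated by the $b_i(\Tc_2(2n-1,2m))$; but as written this essential step is missing.

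For comparison, the paper avoids the whole issue: it takes the localization (cofiber) sequence directly for the zero-section inclusion of the ind-space $BSL_{2n}\to\Tc$, so exactness is automatic and limits are only used to identify the first term via the Thom class $th(\Tc)$; and instead of computing $\varprojlim A^*(\Tc_2(2n-1,m)^0)$ it observes that $\Tc^0$ is an $\A^\infty-\{0\}$-bundle over $BSL_{2n-1}$, hence isomorphic to it in $H_\bullet(k)$ by Morel--Voevodsky, so no $\varprojlim^1$ analysis is needed for the third term at all. If you want to keep your purely cohomological route, supply the image-stabilization argument above; otherwise the space-level argument is both shorter and sturdier.
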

\begin{proof}
For the zero section inclusion of motivic spaces $BSL_{2n}\to \Tc$ we have the following long exact sequence.
$$
{\dots}\to A^{*}(Th(\Tc))\xrightarrow{} A^*(\Tc) \xrightarrow{} A^*(\Tc^0)\xrightarrow{\partial}{\dots}
$$
The isomorphisms
$$
A^{*-2n}(SGr(2n,k))\xrightarrow{\cup th(\Tc_1(2n,k))} Th(\Tc_1(2n,k)),
$$
induce an isomorphism $A^{*-2n}(BSL_{2n})\xrightarrow{\cup th(\Tc)} A^*(Th(\Tc))$, so we can substitute $A^{*-2n}(BSL_{2n})$ for the first term in the above sequence. Using homotopy invariance we exchange $\Tc$ for $BSL_{2n}$. By the definition of $e(\Tc)$ the first arrow represents the cup product $\cup e(\Tc)$.

We have isomorphisms
$$
\Tc^0\cong\varinjlim SGr(1,2n-1,m)\cong \varinjlim SGr(2n-1,1,m).
$$
The sequence of projections
$$
\xymatrix{
{\dots}\ar[r] & SGr(2n-1,1,m) \ar[r]\ar[d] & SGr(2n-1,1,m+1) \ar[r]\ar[d] & {\dots} \\
{\dots}\ar[r] & SGr(2n-1,m+1) \ar[r] & SGr(2n-1,m+2) \ar[r]& {\dots}
}
$$
induces a morphism $\Tc^0\xrightarrow{r} BSL_{2n-1}$. Note that
$$
SGr(2n-1,1,m)\cong \Tc_2(2n-1,m+1)^0,
$$
and $\Tc^0$ is an $\A^\infty-\{0\}$-bundle over $BSL_{2n-1}$, so by \cite[Section~4, Proposition~2.3]{MV} $r$ is an isomorphism in the homotopy category and we can substitute $A^*(BSL_{2n-1})$ for the third term in the long exact sequence.
\end{proof}


\begin{definition}
For a graded ring $R^*$ let $R^*\left[\left[t\right]\right]_h$ be the homogeneous power series ring, i.e. a graded ring with
$$
R^*\left[\left[t\right]\right]_h^k=\left\{\sum a_{i} t^i\,|\, \deg a_{i}+i\deg t=k\right\}.
$$
Note that $R^*\left[\left[t\right]\right]_h=\varprojlim R^*[t]/t^n$, where the limit is taken in the category of graded algebras.
We set $R^*\left[\left[t_1,\dots,t_n\right]\right]_h=R^*\left[\left[t_1,\dots,t_{n-1}\right]\right]_h\left[\left[t_n\right]\right]_h$.
\end{definition}

%
%
%

\begin{theorem}
\label{thm_BSL}
For $\deg e=2n, \deg p_i =4i$ we have isomorphisms
$$
A^*(pt)\left[\left[p_1,\dots,p_{n-1},e\right]\right]_h\xrightarrow{\simeq} A^*(BSL_{2n}),
$$
$$
A^*(pt)\left[\left[p_1,\dots,p_{n}\right]\right]_h\xrightarrow{\simeq} A^*(BSL_{2n+1}).
$$
\end{theorem}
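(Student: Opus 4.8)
The plan is to obtain the even case directly from Theorem~\ref{thm_SGr} by passing to an inverse limit, and then to deduce the odd case from it using the Gysin sequence of Lemma~\ref{lemm_gysinBSL}.

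For $BSL_{2n}$, recall that $BSL_{2n}=\varinjlim_m SGr(2n,m)$ and that the restriction maps $A^*(SGr(2n,2m+3))\to A^*(SGr(2n,2m+1))$ are surjective by Theorem~\ref{thm_SGr}, so Proposition~\ref{prop_lim1} yields $A^*(BSL_{2n})\cong\varprojlim_m A^*(SGr(2n,2m+1))$ with vanishing $\varprojlim^1$. I would then substitute the presentation of Theorem~\ref{thm_SGr}(2) (with first index $2m=2n$, writing the second index as $2N+1$): since the restriction maps carry $b_i(\Tc_1)\mapsto b_i(\Tc_1)$ and $e(\Tc_1)\mapsto e(\Tc_1)$, the tower is compatible with the quotient presentations $A^*(pt)[b_1,\dots,b_n,e]/J_{2n,2N+1}$, where $J_{2n,2N+1}$ is generated by $e^2-b_n$ together with the polynomials $g_j(b_1,\dots,b_n)$ for $N-n+1\le j\le N$. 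Each $g_j$ is homogeneous of degree $2j$, so as $N\to\infty$ these generators leave every bounded range of degrees and only the relation $e^2-b_n$, of fixed degree $2n$, survives in the limit. Identifying $\varprojlim$ of these graded quotients with the homogeneous power series ring modulo $(e^2-b_n)$ and then eliminating $b_n=e^2$ gives $A^*(BSL_{2n})\cong A^*(pt)[[b_1,\dots,b_n,e]]_h/(e^2-b_n)\cong A^*(pt)[[b_1,\dots,b_{n-1},e]]_h$, with $\deg b_i=2i$ and $\deg e=2n$, as claimed.

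For $BSL_{2n+1}$ I would apply Lemma~\ref{lemm_gysinBSL} with $n$ replaced by $n+1$, getting
$$
\dots\to A^{*-2n-2}(BSL_{2n+2})\xrightarrow{\cup e} A^*(BSL_{2n+2})\xrightarrow{j^A} A^*(BSL_{2n+1})\xrightarrow{\partial} A^{*-2n-1}(BSL_{2n+2})\to\dots,
$$
where $e=e(\Tc)$ is the Euler class of the rank $2n+2$ tautological bundle. By the even case just proved, $A^*(BSL_{2n+2})\cong A^*(pt)[[b_1,\dots,b_n,e]]_h$ and this Euler class is exactly the power series variable of degree $2n+2$; multiplication by a power series variable is injective on a homogeneous power series ring, since it strictly raises the $e$-adic order of a nonzero element. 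Hence $\partial=0$, the long exact sequence breaks into short exact sequences $0\to A^{*-2n-2}(BSL_{2n+2})\xrightarrow{\cup e} A^*(BSL_{2n+2})\xrightarrow{j^A} A^*(BSL_{2n+1})\to 0$, so $A^*(BSL_{2n+1})\cong A^*(BSL_{2n+2})/(e)\cong A^*(pt)[[b_1,\dots,b_n]]_h$. Naturality of Borel classes shows $j^A$ sends $b_i(\Tc)$ to $b_i(\Tc_1)$, identifying the generators with the Borel classes of the tautological bundle over $BSL_{2n+1}$.

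The main obstacle I anticipate is the first step: making the inverse-limit passage precise. One must verify that the restriction maps identify the tower $\{A^*(SGr(2n,2N+1))\}_N$ with the tower of quotients $\{A^*(pt)[b_1,\dots,b_n,e]/J_{2n,2N+1}\}_N$ on the nose — in particular that $J_{2n,2N+3}\subseteq J_{2n,2N+1}$ under the identity on the named generators, which follows from the recursion $g_j=\sum_{k=1}^{n}(-1)^{k+1}b_k\,g_{j-k}$ for the complete symmetric polynomials when $j>n$ — and that $\varprojlim_N A^*(pt)[b_1,\dots,b_n,e]/J_{2n,2N+1}$ is genuinely the homogeneous power series ring modulo $(e^2-b_n)$ and not some larger completion; for the latter one uses that each $A^*(SGr(2n,2N+1))$ is a free $A^*(pt)$-module on an explicit, exhausting family of monomials, so that the graded pieces stabilise. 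Once this is in place, the Gysin sequence step, the injectivity of $\cup e$, and the bookkeeping with characteristic classes are routine.
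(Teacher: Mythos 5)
Your proposal is correct and follows essentially the same route as the paper: the even case is obtained from Theorem~\ref{thm_SGr} together with Proposition~\ref{prop_lim1} (surjective pullbacks, vanishing $\varprojlim^1$, passage to the homogeneous power series ring), and the odd case from the Gysin sequence of Lemma~\ref{lemm_gysinBSL} for $BSL_{2n+2}$, using that $\cup\, e(\Tc)$ is injective and then killing $e$. Your extra bookkeeping (compatibility of the towers of presentations and identification of the limit with $A^*(pt)[[b_1,\dots,b_n,e]]_h/(e^2-b_n)$) just makes explicit what the paper leaves implicit.
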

\begin{proof}
The case of $BSL_{2n}$ follows from Theorem~\ref{thm_SGr} and Proposition~\ref{prop_lim1}, since for the sequence
$$
{\dots}\to SGr(2n,2m+1) \xrightarrow{i_{2m+1}} SGr(2n,2m+3) \to {\dots}
$$
the pullbacks $i_{2m+1}^A$ are surjective and $\lim^1$ vanishes yielding
\begin{multline*}
A^*(BSL_2)\cong \varprojlim A^*(SGr(2n,2m+1)) = \\
=\varprojlim A^*(pt)[p_1,p_2,\ldots,p_n,e]/J_{2n,2m+1}
=A^*(pt)\left[\left[p_1,\dots,p_{n-1},e\right]\right]_h.
\end{multline*}

For the odd case consider the Gysin sequence from Lemma~\ref{lemm_gysinBSL} for $BSL_{2n+2}$.
By the above calculations $e(\Tc)$ is not a zero divisor, so the the map $\cup e(\Tc)$ is injective and we have a short exact sequence
$$
0\to A^{*-2n-2}(BSL_{2n+2})\xrightarrow{\cup e(\Tc)} A^*(BSL_{2n+2})\to A^*(BSL_{2n+1})\to 0.
$$
Identifying $A^*(BSL_{2n+2})$ with the homogeneous power series and killing $e$ we obtain the desired result.
\end{proof}

\begin{rem}
Another way to compute $A^*(BSL_{2n+1})$ is to use the calculation for $A^*(SGr(2n+1,2m+1))\cong A^*(SGr(2m-2n,2m+1))$. The Euler classes are unstable, so the image 
\[
Im(A^*(SGr(2n+1,2m+3))\to A^*(SGr(2n+1,2m+1)))
\]
 is generated by the Pontryagin classes $p_i(\Tc_2)$ and $\lim^1$ vanishes. One could express $p_i(\Tc_2)$ in terms of $p_i(\Tc_1)$, obtaining the desired result.
\end{rem}


\end{document}